\newtheorem{theorem}{Theorem}[section]
\newtheorem{lemma}[theorem]{\bf Lemma}
\newtheorem{prop}[theorem]{Proposition}
\newtheorem{definition}[theorem]{Definition}
\newtheorem{rem}[theorem]{Remark}
\newtheorem{remark}[theorem]{\textbf{Remark}}
\numberwithin{equation}{section}
\newcommand{\R}{\mathbb R}
\newcommand{\N}{\mathbb N}
\newcommand{\bu}{\mathbf u}
\newcommand{\bh}{\mathbf h}
\newcommand{\bg}{\mathbf g}
\newcommand{\bw}{\mathbf w}
\newcommand{\bv}{\mathbf v}
\newcommand{\br}{\mathbf r}
\newcommand{\bs}{\mathbf s}
\newcommand{\vphi}{\varphi}
\newcommand{\vf}{\mathbf f}
\DeclareMathOperator*{\essinf}{ess\,inf}
\DeclareMathOperator*{\esssup}{ess\,sup}
\newcommand{\T}{\mathcal{T}}
\newcommand{\op}{\mathrm{op}}
\DeclareMathOperator{\Div}{div}
\def\w{{\bf w}}
\def\g{{\bf g}}
\newcommand{\Lp}{L^p}
\newcommand{\pp}{{p(\cdot)}}
\newcommand{\cpp}{{p'(\cdot)}}
\newcommand{\Lpp}{L^{p(\cdot)}}
\newcommand{\Lcpp}{L^{p'(\cdot)}}
\newcommand{\Pp}{\mathcal P}
\newcommand{\calS}{\mathcal{S}}
\newcommand{\LQpp}{\mathcal{L}_Q^\pp}
\newcommand{\HQpp}{H_Q^{1,\pp}}
\newcommand{\TildeHQpp}{\Tilde{H}_Q^{1,\pp}}
\newcommand{\aver}[1]{-\hskip-0.46cm\int_{#1}}
\def\XXint#1#2#3{{\setbox0=\hbox{$#1{#2#3}{\int}$}
     \vcenter{\hbox{$#2#3$}}\kern-.5\wd0}}
\newcommand{\bk}{\backslash}
\begin{document}

\title[Poincar\'e inequalities and Neumann problems]{Poincar\'e inequalities and Neumann problems for the variable exponent setting}

\author{David Cruz-Uribe, OFS, Michael Penrod, and Scott Rodney}

\keywords{degenerate Sobolev spaces, $p$-Laplacian, Poincar\'e
  inequalities, variable exponent, nonstandard growth conditions}

\date{August 17, 2021}

\address{David Cruz-Uribe, OFS \\
Dept. of Mathematics \\
University of Alabama \\
 Tusca\-loosa, AL 35487, USA}
\email{dcruzuribe@ua.edu}

\address{Michael Penrod \\
Dept. of Mathematics \\
University of Alabama \\
 Tuscaloosa, AL 35487, USA}
\email{mjpenrod@crimson.ua.edu}

\address{Scott Rodney\\
Dept. of Mathematics, Physics and Geology \\ 
Cape Breton University \\
Sydney, NS B1Y3V3, CA} 
\email{scott\_rodney@cbu.ca}

\thanks{D.~Cruz-Uribe is supported by 
  research funds from the Dean of the College of Arts \& Sciences, the
  University of Alabama. S.~Rodney is supported by the NSERC Discovery
  Grant program.  This paper is based on the masters thesis of the
  second author, written at Sam Houston State under the direction of
  Li-An Wang.  The authors would like to thank the anonymous referee
  for their clarifying comments and an important reference.}

\subjclass{35B65,35D30,35J60,35J70,42B35,42B37,46E35}

\begin{abstract}
We extend the results of \cite{CURE-2017}, where we proved an equivalence between weighted Poincar\'e inequalities and
  the existence of weak solutions to a family of Neumann problems related to a
  degenerate $p$-Laplacian. Here we prove a similar equivalence 
  between Poincar\'e inequalities in variable exponent  spaces and
  solutions to a degenerate $\pp$-Laplacian,  a
  non-linear elliptic equation with nonstandard growth conditions.
\end{abstract}

\maketitle

\section{Introduction}\label{section:intro}
Poincar\'e inequalities play a central role in the study of regularity
for elliptic equations.  For specific degenerate elliptic equations, an
important problem is to show the existence of  such an inequality;
however, an extensive theory has been developed by assuming their
existence.  See, for example, \cite{MR2204824,MR2574880}.
In~\cite{CURE-2017}, the first and third authors, along with E.~Rosta,
gave a characterization of the existence of a weighted Poincar\'e
inequality, adapted to the solution space of degenerate elliptic equations, in
terms of the existence and regularity of a weak solution to a Neumann
problem for a degenerate $p$-Laplacian equation.

The goal of the present paper is to extend this result to the setting
of variable exponent spaces.  Here, the relevant equations are
degenerate $\pp$-Laplacians.   The basic operator is the
$\pp$-Laplacian:  given an
exponent function $\pp$ (see Section~\ref{section:prelim} below), let
\[ \Delta_\pp u = -\Div ( |\nabla u|^{\pp-2} \nabla u ). \]
This operator arises in the calculus of variations as an example of
nonstandard growth conditions, and has been extensively studied by a
number of authors: see~\cite{MR3308513, MR2639204,
  MR2291779, MR3379920} and the extensive references they contain.  We are
interested in a degenerate version of this operator,
\[ Lu = - \Div(|\sqrt{Q}\nabla u|^{\pp-2} Q\nabla u), \]
where $Q$ is a $n\times n$, positive semi-definite, self-adjoint, measurable
matrix function.   These operators have also been studied, though
nowhere nearly as extensively:  see, for instance, \cite{MR3585054,  MR2670139,
 MR3974098}.    This paper is part of an ongoing project to 
develop  a general regularity theory for these operators. 

In order to state our main result, we first give some definitions and
notation that will be used throughout our work.  Let $\Omega \subset \R^n$ be a fixed
domain (open and connected), and let $E$ be a bounded subdomain with
$\overline{E}\subseteq \Omega$.   Given an exponent function $\pp$, we
let $L^\pp(E)$ denote the associated  variable Lebesgue space; for a precise
definition, see Section~\ref{section:prelim} below.

 Let $\calS_n$ denote the collection of
all positive semi-definite, $n\times n$ self-adjoint matrices. Let
$Q: \Omega \rightarrow \calS_n$ be a measurable, matrix-valued function
whose entries are Lebesgue measurable.   We define
$$\gamma(x) = |Q(x)|_\textrm{op} = \displaystyle\sup_{|\xi|=1}
|Q(x)\xi|$$
to be the pointwise operator norm of $Q(x)$; this function will play
an important role in our results.  We will generally assume that
$\gamma^{1/2}$ lies in the variable Lebesgue space $L^{\pp}(E)$.  More
generally, let $v$ be a weight on $\Omega$: i.e., a non-negative
function in $L^1_\text{loc}(\Omega)$. Given a function $f$ on $E$, we
define the weighted average of $f$ on $E$ by
\[ f_{E,v} = \frac{1}{v(E)} \int_E f(x) v(x) dx = \aver{E} fdv.\]
If $v=1$ we write simply $f_E$.  Again, we will generally assume that
$v \in L^{\pp}(E)$.

\begin{remark}
  In this paper we do not assume any connection between weight $v$ and the
  matrix $Q$.  However, in many situations it is common to assume that
  $v$ is the largest eigenvalue of $Q$:  that is, $v=|Q|_{\op}$.  See, for
  instance, \cite{MR3544941, cruzuribe2020bounded}.
\end{remark}

\medskip

The next two definitions are central to our main result.  

\begin{definition} \label{PC-prop} Given $\pp \in \Pp(E)$, a weight
  $v$ and a measurable, matrix-valued function $Q$, suppose $v,\,
  \gamma^{1/2} \in L^\pp(E)$.  Then the pair $(v,Q)$
  is said to have the Poincar\'e property of order $\pp$ on $E$ if
  there is a positive constant $C_0=C_0(E,\pp)$ such that for all
  $f\in C^1(\overline{E})$,
\begin{equation} \label{PC-ineq}
\|f-f_{E,v}\|_{\Lpp(v;E)}
\leq  C_0 \|\nabla f\|_{\LQpp(E)}.
\end{equation}
\end{definition}

\begin{remark}
The assumption that $ v,\, \gamma^{1/2} \in L^\pp(E)$ ensures that both
  sides of inequality~\eqref{PC-ineq} are finite.
\end{remark}

\begin{definition}\label{N-prop}
  Given $\pp \in \Pp(E)$, a weight
  $v$ and a measurable matrix-valued function $Q$, suppose $v,\,
  \gamma^{1/2} \in L^\pp(E)$.  Then the pair $(v,Q)$ is said to
  have the $\pp$-Neumann property on $E$ if the following hold:
\begin{enumerate}
\item Given any $f\in \Lpp(v;E)$, there exists a weak solution
  $(u,\g)_f \in \TildeHQpp(v;E)$ to the degenerate
  Neumann problem
\begin{equation}\label{nprob}
\begin{cases}
\Div\Big(\Big|\sqrt{Q(x)}\nabla u(x)\Big|^{p(x)-2} Q(x)\nabla u(x)\Big) 
& = |f(x)|^{p(x)-2}f(x)v(x)^{p(x)} \text{ in }E\\
{\bf n}^T \cdot Q(x) \nabla u(x) &= 0\text{ on }\partial E,
\end{cases}
\end{equation}
where ${\bf n}$ is the outward unit normal vector of
$\partial E$.

\item Any weak solution $(u,{\bf g})_f \in\TildeHQpp(v;E)$ of
  \eqref{nprob} is regular:   that is, there is a positive constant
  $C_1=C_1(\pp,v,E)$ such that 
\begin{equation}\label{hypoest}
\| u\|_{\Lpp(v;E)} \leq C_1 \|f\|_{\Lpp(v;E)}^{\frac{r_*-1}{p_*-1}},
\end{equation}
where $p_*$ and $r_*$ are defined by
 \begin{equation} \label{eqn:exponents}
   p_* = \begin{cases}
     p_+ & \text{ if } \|\bg\|_{\LQpp(E)} <1 \\
     p_- & \text{ if }\|\bg\|_{\LQpp(E)} \geq 1
   \end{cases}
   \quad \text{ and } \quad
   r_* = \begin{cases}
     p_+ & \text{ if } \|f\|_{\Lpp(v;E)} \geq 1\\
     p_- & \text{ if } \|f\|_{\Lpp(v;E)} < 1
   \end{cases}.
      \end{equation}
    \end{enumerate}
\end{definition}

\begin{remark}
  The degenerate, variable exponent Sobolev space,
$\TildeHQpp(v;E)$, will be defined in Section~\ref{section:prelim}.
Here we note that the definition will require the assumption that 
 $v,\, \gamma^{1/2} \in L^\pp(E)$.
\end{remark}

\begin{remark}
  The vector function $\bg$ should be thought of as a weak gradient of $f$; we avoid the
  notation $\nabla f$ since in the degenerate setting it is often not
  a weak derivative in the classical sense.  See the discussion after
  Definition~\ref{defsobsr}. 
\end{remark}

\begin{remark}
While the PDE in \eqref{nprob} is stated in terms of a classical Neumann problem,
we make no assumptions about the regularity of the boundary $\partial
E$ in our definition of a weak solution.  In the constant exponent
case, as  noted in~\cite[Remark~2.10]{CURE-2017}, our definition of
weak solution is equivalent to this classical formulation if we assume sufficient regularity.
\end{remark}

\medskip

Our main result shows that these two properties are equivalent under
certain minimal assumptions on the exponent function $\pp$, the
weight $v$, and the operator norm of the matrix function~$Q$.

\begin{theorem}\label{main-result}
  Let $\pp \in \Pp(E)$ with $1 < p_- \leq p_+ < \infty$.  Suppose  $v$
  is a   weight in $\Omega$ and $Q$ is a measurable
  matrix function with $v,\,\gamma^{1/2} \in \Lpp(E)$. Then the pair $(v,Q)$ has
  the Poincar\'e property of order $\pp$ on $E$ if and only if $(v,Q)$ has
  the $\pp$-Neumann property on $E$.
\end{theorem}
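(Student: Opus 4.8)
The plan is to prove the two implications separately, following the broad strategy of \cite{CURE-2017} but adapting each step to the variable exponent machinery.

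First, I would prove that the Poincar\'e property implies the $\pp$-Neumann property. Given $f \in \Lpp(v;E)$, I want to produce a weak solution to \eqref{nprob}. The natural approach is variational: consider the functional
\[ J(u) = \frac{1}{p(\cdot)}\int_E |\sqrt{Q}\nabla u|^{\pp}\,dx + \int_E |f|^{\pp-2} f\, v^{\pp}\, u \, dx \]
(interpreted with the appropriate modular, since $\pp$ is nonconstant) over the space $\TildeHQpp(v;E)$, or rather over the subspace of functions with weighted average zero. The Poincar\'e inequality \eqref{PC-ineq} is exactly what is needed to show this functional is coercive on that subspace: it controls $\|u - u_{E,v}\|_{\Lpp(v;E)}$ by $\|\nabla u\|_{\LQpp(E)}$, so on the mean-zero subspace the gradient term dominates. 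Combined with the standard convexity and weak lower semicontinuity of modular functionals in variable Lebesgue spaces, the direct method of the calculus of variations yields a minimizer, whose Euler--Lagrange equation is the weak formulation of \eqref{nprob}. The regularity estimate \eqref{hypoest} should then fall out by testing the weak formulation against the solution $u$ itself and using \eqref{PC-ineq} once more, together with the relationship between the modular and the norm encoded in the definitions of $p_*$ and $r_*$ (this is where the case distinctions in \eqref{eqn:exponents} come from — they record whether the relevant norm is $\geq 1$ or $<1$, which flips the direction of the norm-modular comparison).

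Second, for the converse — the $\pp$-Neumann property implies the Poincar\'e property — I would argue essentially by duality, mimicking \cite{CURE-2017}. Given $f \in C^1(\overline{E})$, I want to bound $\|f - f_{E,v}\|_{\Lpp(v;E)}$. By the norm-conjugate formula for variable Lebesgue spaces, this norm is (up to a constant) the supremum of $\int_E (f - f_{E,v}) h\, v\, dx$ over $h \in \Lcpp(v;E)$ with norm at most one. For each such $h$ (or rather for $h$ chosen to nearly realize the supremum), I feed a suitable function built from $h$ into the Neumann property to get a solution $u$, test the weak formulation of \eqref{nprob} against $f - f_{E,v}$, and use the regularity bound \eqref{hypoest} to control the result. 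The pairing on the left-hand side of the weak formulation becomes $\int_E |\sqrt Q \nabla u|^{\pp-2} Q\nabla u \cdot \nabla f\, dx$, which by Cauchy--Schwarz for the quadratic form $Q$ is bounded by $\|\nabla u\|_{\LQpp}^{\text{(power)}} \|\nabla f\|_{\LQpp(E)}$; the right-hand side reproduces the desired pairing with $f - f_{E,v}$. Unwinding the exponents gives \eqref{PC-ineq} with $C_0$ depending on $C_1$.

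The main obstacle I expect is bookkeeping with the nonconstant exponent: unlike in \cite{CURE-2017}, there is no clean homogeneity, so every estimate splits into cases according to whether a modular or norm exceeds $1$, and the powers $\frac{r_*-1}{p_*-1}$ in \eqref{hypoest} must be tracked carefully through each test-function substitution. A second technical point is verifying that the duality argument is legitimate — that the test functions one wants to plug into the Neumann property genuinely lie in $\Lpp(v;E)$ and that the weak formulation is valid against $f - f_{E,v} \in C^1(\overline E)$ — which requires the hypotheses $v, \gamma^{1/2} \in \Lpp(E)$ in an essential way and depends on the precise definition of $\TildeHQpp(v;E)$ from Section~\ref{section:prelim}. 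Establishing coercivity and lower semicontinuity of the variational functional in the degenerate variable-exponent space, and confirming that its minimizer over the mean-zero subspace actually solves the full Neumann problem (i.e., that the constraint multiplier vanishes), is the other place where care is needed.
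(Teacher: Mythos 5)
Your proposal takes a genuinely different route from the paper in both directions. For the implication (Poincar\'e $\Rightarrow$ Neumann), the paper does not use the direct method of the calculus of variations at all; instead it applies Minty's theorem on monotone operators. It defines $\T:\TildeHQpp(v;E)\to(\TildeHQpp(v;E))^*$ by the left-hand side of the weak formulation and $\Gamma_f$ by the right-hand side, and verifies in separate lemmas that $\T$ is bounded, monotone, and hemicontinuous, and that it is almost coercive with respect to $\Gamma_f$ (the Poincar\'e inequality enters only in the coercivity lemma). This is close in spirit to your variational argument --- the monotone operator $\T$ is the Gateaux derivative of your functional $J$ --- but it deliberately avoids proving Gateaux differentiability and weak lower semicontinuity of the modular in the degenerate, variable-exponent Sobolev space. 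Hemicontinuity is a strictly weaker condition and is verified directly by splitting $E$ into the regions $\{p(\cdot)>2\}$ and $\{p(\cdot)\le 2\}$ and invoking pointwise inequalities of Lindqvist for the map $s\mapsto|s|^{p-2}s$; your plan would need comparable arguments plus a justification of differentiation under the integral sign, and you would also have to address your own flagged concern about the constraint multiplier (which in fact is a non-issue here, because the weak formulation is only required to hold against mean-zero test functions, so minimizing on $\TildeHQpp(v;E)$ is already equivalent to the weak formulation).

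For the converse (Neumann $\Rightarrow$ Poincar\'e), the paper's argument is considerably simpler than the duality scheme you sketch: it does not take a supremum over dual elements $h$ at all. Instead it normalizes so that $f_{E,v}=0$ and $\|f\|_{\Lpp(v;E)}=1$, then feeds $f$ \emph{itself} into the Neumann problem as the data, obtaining a solution $(u,\bg)_f$. Testing the weak formulation against the test function $\varphi=f$ and using Proposition~\ref{normalized-mod} gives $1=\rho_\pp(fv)\le K_\pp\|\bg\|_{\LQpp(E)}^{b_*-1}\|\nabla f\|_{\LQpp(E)}$, and the remaining factor is controlled by the preliminary Lemma~\ref{N->PC:p*r*}, which converts the regularity estimate \eqref{hypoest} on $\|u\|_{\Lpp(v;E)}$ into a bound on $\|\bg\|_{\LQpp(E)}^{p_*-1}$ by testing the weak formulation against $u$ itself. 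That conversion step is the gap in your outline: \eqref{hypoest} controls only $\|u\|_{\Lpp(v;E)}$, not $\|\nabla u\|_{\LQpp(E)}$, yet your Cauchy--Schwarz bound requires the latter, so you need the analogue of Lemma~\ref{N->PC:p*r*} as a separate step. Your duality argument also needs you to make explicit the map $h\mapsto g=\sgn(h)|h|^{p'(\cdot)-1}v^{-1}$ (so that $|g|^{\pp-2}gv^\pp=hv$) and to verify $g\in\Lpp(v;E)$ via Theorem~\ref{norm-ineq}; this works, but the paper's ``self-dual'' choice of data $g=f$ sidesteps both the construction and the associated exponent bookkeeping, and is the main simplification in this direction.
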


\medskip

This result is a generalization of the main result
in~\cite{CURE-2017}; when $\pp$ is constant Theorem~\ref{main-result}
is equivalent to it.  However, this is not immediately clear.  In the
constant exponent case, the exponent on the right-hand side of the regularity estimate corresponding
to~\eqref{hypoest} is $1$.  This is because in the constant exponent
case the PDE is homogeneous and we can normalize the equation, but
this is no longer possible in the variable exponent case.  But, in the
constant exponent case,  we have that $\frac{r_*-1}{p_*-1}=1$.  

More significantly, there is also a difference in the definition of
weighted spaces and the formulation of the
Poincar\'e inequality.  
Denote the weight that appears in 
\cite{CURE-2017} by $w$; there we assumed that $w\in
L^1(E)$ and defined a function $f$ to be in $L^p(w)$ if
\[ \int_E |f|^p w\,dx < \infty, \]
However, in the present case, if $\pp=p$ is a constant exponent, then
we have that  $f\in L^p(v;E)$ if
\[ \int_E |fv|^p\,dx = \int_E |f| v^p\,dx < \infty.  \]
Therefore, to pass between our current setting and that in
\cite{CURE-2017}, we need to define $w$ by $v=w^{1/p}$.

This leads to a substantial difference in the statement of the
Poincar\'e inequality.  In~\cite{CURE-2017} the left-hand side of the 
Poincar\'e inequality is (assuming $v=w^{1/p}$)
\[ \bigg(\int_E |f(x)-f_{E,w}|^p w\,dx\bigg)^{1/p}
  = \|f-f_{E,w}\|_{L^p(v;E)}; \]
on the other hand, in Definition~\ref{PC-prop} the left-hand side is
\[ \bigg(\int_E |f(x)-f_{E,v}|^p w\,dx\bigg)^{1/p}
  = \|f-f_{E,v}\|_{L^p(v;E)}.  \]
These would appear to be different conditions, but, in fact, these two
versions of the Poincar\'e inequality are equivalent.  Moreover, we
have that if we  use the more standard,
unweighted average $f_E$ in the Poincar\'e inequality, then this
implies Definition~\ref{PC-prop}.   The converse, however, requires a
additional assumption on $v$.   Versions of
the following result are part of the folklore of PDEs; we first
encountered it as a passing remark in~\cite{FKS}.  For completeness,
we give a proof in an appendix.

\begin{prop}\label{eqPC}
Given $1<p<\infty$ and a bounded set $E$, suppose $v\in L^p(E)$ and  set
$w=v^p$.   Then,
\[  \|f-f_{E,v}\|_{L^p(v;E)}  \approx \|f-f_{E,w}\|_{L^p(v;E)}, \]
where the implicit constants depend on $E$, $p$ and $v$.
Moreover, we also have that
\[  \|f-f_{E,v}\|_{L^p(v;E)}  \lesssim \|f-f_{E}\|_{L^p(v;E)}. \]
Finally, if we assume that $v^{-1}\in L^{p'}(E)$, then
\[  \|f-f_{E}\|_{L^p(v;E)} \lesssim \|f-f_{E,v}\|_{L^p(v;E)} . \]
\end{prop}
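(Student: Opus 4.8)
The plan is to prove the three assertions in order, each time reducing an expression involving one average to an expression involving the unweighted integral mean $f_E$. The key elementary tool is that for any constant $c$, the triangle inequality in $L^p(v;E)$ gives $\|f-c\|_{L^p(v;E)} \le \|f-f_{E,v}\|_{L^p(v;E)} + |f_{E,v}-c|\,\|v\|_{L^p(E)}$, and symmetrically with the roles reversed, so it suffices to control the scalar quantities $|f_{E,v}-c|$ for $c$ equal to $f_{E,w}$, $f_E$, or another average.

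First I would prove the second (unconditional) inequality $\|f-f_{E,v}\|_{L^p(v;E)} \lesssim \|f-f_E\|_{L^p(v;E)}$, since this is the cleanest: among all constants $c$, the constant $c=f_{E,v}$ minimizes $\|f-c\|_{L^p(v;E)}$ (this is the standard fact that the best $L^p(v)$ approximation by constants, for $1<p<\infty$, is attained; alternatively one can use $f_{E,v}$ is a weighted mean and apply Jensen). Hence $\|f-f_{E,v}\|_{L^p(v;E)} \le \|f-f_E\|_{L^p(v;E)}$ directly, with constant $1$. For the first equivalence $\|f-f_{E,v}\|_{L^p(v;E)} \approx \|f-f_{E,w}\|_{L^p(v;E)}$, I would again use the minimizing property in one direction (giving $\|f-f_{E,v}\|_{L^p(v;E)} \le \|f-f_{E,w}\|_{L^p(v;E)}$ with constant $1$), and for the reverse direction estimate $|f_{E,w} - f_{E,v}|$: writing $f_{E,w}-f_{E,v} = \aver{E}(f - f_{E,v})\,dv_w$-type manipulation where $dv_w = w\,dx/w(E)$, one bounds this by $\frac{1}{w(E)}\int_E |f-f_{E,v}| v^p\,dx$ and applies Hölder with exponents $p, p'$ to pull out $\|f-f_{E,v}\|_{L^p(v;E)}$ against $\|v^{p-1}\|_{L^{p'}(E)} = \|v\|_{L^p(E)}^{p-1}$, all divided by $w(E)=\|v\|_{L^p(E)}^p$; then multiply by $\|v\|_{L^p(E)}$ (the norm of the constant function) to land back in $L^p(v;E)$. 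Tracking constants, this gives the reverse with a constant depending only on $\|v\|_{L^p(E)}$ and $w(E)$, i.e.\ on $v$, $p$, $E$.

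For the third, conditional inequality $\|f-f_E\|_{L^p(v;E)} \lesssim \|f-f_{E,v}\|_{L^p(v;E)}$, I would again invoke the triangle inequality and reduce to bounding $|f_E - f_{E,v}|$. We have $f_E - f_{E,v} = \frac{1}{|E|}\int_E (f - f_{E,v})\,dx$, and now the extra hypothesis $v^{-1}\in L^{p'}(E)$ enters: write $f-f_{E,v} = (f-f_{E,v})v \cdot v^{-1}$ and apply Hölder to get $|f_E-f_{E,v}| \le \frac{1}{|E|}\|(f-f_{E,v})v\|_{L^p(E)}\|v^{-1}\|_{L^{p'}(E)} = \frac{1}{|E|}\|f-f_{E,v}\|_{L^p(v;E)}\|v^{-1}\|_{L^{p'}(E)}$. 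Multiplying by $\|v\|_{L^p(E)}$ for the constant term then yields the claim with constant $\frac{1}{|E|}\|v\|_{L^p(E)}\|v^{-1}\|_{L^{p'}(E)}$.

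The main obstacle is essentially bookkeeping: there is no deep difficulty here, but one must be careful about which inequalities are unconditional versus which genuinely need $v\in L^p$ (needed everywhere, to make the constant function lie in $L^p(v;E)$ and to make $w(E)$ finite and positive) versus which need $v^{-1}\in L^{p'}$ (only the last one). The one point requiring a small argument rather than a bare citation is the minimizing property of the weighted average in $L^p(v;E)$; if one prefers to avoid convexity/minimization, this step can be replaced throughout by the triangle-inequality-plus-Hölder estimate on $|f_{E,v}-c|$ as above, at the cost of slightly worse explicit constants, which does not matter since we only claim $\approx$ and $\lesssim$.
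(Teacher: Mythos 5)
Your primary line of attack contains a genuine error: the weighted average $f_{E,v}=\frac{1}{v(E)}\int_E fv\,dx$ is \emph{not} the constant that minimizes $\|f-c\|_{L^p(v;E)}$ over $c\in\R$. Already for $p=2$, the minimizer of $\int_E|f-c|^2v^2\,dx$ (set the $c$-derivative to zero) is $f_{E,w}$ with $w=v^2$, not $f_{E,v}$; for general $p\in(1,\infty)$ the minimizer exists by strict convexity but has no closed form, and is neither $f_{E,v}$ nor $f_{E,w}$ in general. So your constant-$1$ claims $\|f-f_{E,v}\|_{L^p(v;E)}\le\|f-f_{E,w}\|_{L^p(v;E)}$ and $\|f-f_{E,v}\|_{L^p(v;E)}\le\|f-f_E\|_{L^p(v;E)}$ are not justified --- indeed for $p=2$ the first one goes the wrong way. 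The parenthetical appeal to Jensen does not rescue this: applied to the probability measure $v\,dx/v(E)$ it gives $|f_{E,v}-c|^p\le\frac{1}{v(E)}\int_E|f-c|^pv\,dx$, which carries the wrong power of $v$ relative to the $L^p(v;E)$ norm and still needs a further H\"older step, yielding a constant that is not $1$.

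Fortunately, the fix you sketch in your final paragraph is exactly what the paper's appendix does, and it closes every gap. For the middle inequality, write $f_{E,v}-f_E=\frac{1}{v(E)}\int_E(f-f_E)v\,dx$ and apply H\"older with exponents $p,p'$ against the constant function $1$ to get $|f_{E,v}-f_E|\le\frac{|E|^{1/p'}}{v(E)}\|f-f_E\|_{L^p(v;E)}$, then use the triangle inequality and $\|1\|_{L^p(v;E)}=\|v\|_{L^p(E)}$. For the forward direction of the first equivalence you should likewise estimate $|f_{E,w}-f_{E,v}|$ \emph{centered at $f_{E,w}$}: $|f_{E,w}-f_{E,v}|=\big|\frac{1}{v(E)}\int_E(f_{E,w}-f)v\,dx\big|\le\frac{|E|^{1/p'}}{v(E)}\|f-f_{E,w}\|_{L^p(v;E)}$, then triangle inequality. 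Your treatments of the reverse direction (splitting $v^p=v\cdot v^{p-1}$, $\|v^{p-1}\|_{L^{p'}(E)}=w(E)^{1/p'}$) and of the conditional third inequality (splitting $1=v\cdot v^{-1}$, using $v^{-1}\in L^{p'}(E)$) are correct and coincide with the paper's $K_2$ and $K_4$ computations. With the minimizer step replaced throughout by this triangle-plus-H\"older estimate, your argument becomes a correct proof and is essentially identical to the one in the appendix.
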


\begin{remark}
  The hypothesis that $v^{-1}\in L^{p'}(E)$ is satisfied, for instance, if
  we assume that $v^p$ is in the Muckenhoupt class $A_p$.
\end{remark}

\medskip

The remainder of this paper is organized as follows. In Section
\ref{section:prelim} we first state the basic definitions and
properties of exponent functions and variable Lebesgue spaces needed
for our results. We then define matrix weighted variable exponent
spaces, and use these to define the degenerate Sobolev spaces where
our solutions live.  An important technical step is proving that these spaces have
the requisite properties.   We then give the precise definition of weak solution
used in Definition~\ref{N-prop}. In Sections~\ref{section:N->PC}
and~\ref{section:PC->N} we prove Theorem~\ref{main-result}, each
section dedicated to one implication.   The proof is similar in
outline to the proof in~\cite{CURE-2017}, but differs significantly in
detail as we address the problems that arise from working in
variable exponent spaces.
Finally, in
Appendix~\ref{appendix:poincare} we prove Proposition~\ref{eqPC}. 

\section{Preliminaries}\label{section:prelim}

We begin this section by reviewing the basic definitions, notation,
and properties of exponent functions and variable Lebesgue spaces. For
complete information, we refer the interested reader to \cite{DCUVLS}.

\begin{definition}
An exponent function is a Lebesgue measurable function $\pp : E
\rightarrow [1,\infty]$. Denote the collection of all exponent
functions on $E$ by $\Pp(E)$. Define the set 
$E_\infty = \{x \in E : p(x) = \infty\}$
and let
\[p_-(E) = p_- = \essinf_{x \in E}p(x), \textrm{ and\; } p_+(E) = p_+ = \esssup_{x \in E}p(x).\]
\end{definition}
\begin{definition}
Given $\pp \in \Pp(E)$ and a Lebesgue measurable function $f$, define the modular functional (or simply the modular) associated with $\pp$ by
\[ \rho_{\pp,E}(f)= \int_{E\bk E_\infty} |f(x)|^{p(x)} dx + \|f\|_{L^\infty(E_\infty)}.\]
If $f$ is unbounded on $E_\infty$ or $f(\cdot)^{\pp} \not\in L^1(E\bk E_\infty)$ then we define $\rho_{\pp,E}(f) = +\infty$. When $|E_\infty| = 0$ we let $\|f\|_{L^\infty(E_\infty)}=0$; when $|E\bk E_\infty| = 0$, then $\rho_{\pp,E}(f) = \|f\|_{L^\infty(E_\infty)}$. In situations where there is no ambiguity we will simply write $\rho_\pp(f)$ or $\rho(f)$.
\end{definition}
\begin{definition}
  Let $\pp \in \Pp(E)$ and let $v$ be a weight on $E$.
\begin{enumerate}
    \item We define the variable Lebesgue space $\Lpp(E)$ to be the
      collection of all Lebesgue measurable functions $f : E
      \rightarrow \R$ satisfying
    \[\|f\|_{L^\pp(E)} = \inf\bigg\{\mu>0
      :\rho\left(\frac{f}{\mu}\right)\leq 1\bigg\} < \infty.\]
    
    \item We define the weighted variable Lebesgue space $\Lpp(v;E)$
      to be the collection of all Lebesgue measurable functions
      satisfying
    \[ \|f\|_{L^\pp(v;E)}= \|fv\|_{L^\pp(E)} < \infty.\]
\end{enumerate}
\end{definition}
\begin{theorem}{\cite{DCUVLS}}\label{Lpp-BSR}
  Let $\pp \in \Pp(E)$. Then $\Lpp(E)$ is a Banach space. The space
  $\Lpp(E)$ is separable if and only if $p_+ < \infty$, and 
  $\Lpp(E)$ is reflexive if and only if $1 < p_- \leq p_+ < \infty$.
\end{theorem}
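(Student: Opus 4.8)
Since this is a standard fact about variable Lebesgue spaces (a complete proof is in \cite{DCUVLS}), the plan is to sketch the skeleton; all three assertions rest on the interplay between the Luxemburg norm and the modular $\rho_\pp$, in particular the unit-ball property $\rho_\pp(f)\le 1 \iff \|f\|_{\Lpp(E)}\le 1$ and the Fatou property of $\rho_\pp$ (if $f_j\to f$ a.e.\ then $\rho_\pp(f)\le\liminf_j\rho_\pp(f_j)$, by Fatou's lemma on the integral term and lower semicontinuity of the sup-norm on $E_\infty$). That $\Lpp(E)$ is a normed space follows from homogeneity and positivity, immediate from the definition, together with the triangle inequality, which one derives from the convexity of $\rho_\pp$: for $x\notin E_\infty$ the map $t\mapsto t^{p(x)}$ is convex since $p(x)\ge 1$, and the $L^\infty(E_\infty)$ term is a seminorm, so $\rho_\pp$ is convex, and convexity plus the unit-ball property yields $\|f+g\|_\pp\le\|f\|_\pp+\|g\|_\pp$ in the usual way. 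For completeness it suffices, by the Riesz--Fischer criterion, to show that every series $\sum_k f_k$ with $\sum_k\|f_k\|_\pp=:M<\infty$ converges in $\Lpp(E)$. Writing $g=\sum_k|f_k|$, monotone convergence together with the triangle inequality gives $\|g\|_\pp\le M$, so in particular $g<\infty$ a.e., whence $f:=\sum_k f_k$ converges a.e.\ with $|f|\le g\in\Lpp(E)$; applying the Fatou property of the norm to the tail partial sums then shows $\|f-\sum_{k\le N}f_k\|_\pp\le\sum_{k>N}\|f_k\|_\pp\to 0$.

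For separability when $p_+<\infty$, note that $E$ is bounded, so $|E|<\infty$, $\chi_E\in\Lpp(E)$, and on bounded sets modular convergence forces norm convergence. The truncations $f\mapsto f\chi_{\{|f|\le N\}}$ converge to $f$ in modular for every dilation, by dominated convergence using $|f|^{p(\cdot)}\in L^1$ and $p_+<\infty$, hence in norm; bounded functions are then approximated uniformly, and therefore in $\Lpp(E)$, by simple functions; and simple functions are approximated by finite rational linear combinations of characteristic functions of dyadic cubes, a countable family. Conversely, if $p_+=\infty$ and $|E_\infty|>0$, then every $f$ supported in $E_\infty$ has $\|f\|_\pp=\|f\|_{L^\infty(E_\infty)}$, so $L^\infty(E_\infty)$ embeds isometrically into $\Lpp(E)$ and the latter inherits its non-separability; if $p_+=\infty$ but $|E_\infty|=0$, then the sets $\{x:p(x)>k\}$ all have positive measure, and a more delicate construction using disjointly supported, suitably normalized functions living on these sets produces an isomorphic copy of a non-separable sequence space inside $\Lpp(E)$ (see \cite[Theorem~2.78]{DCUVLS}). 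Either way $\Lpp(E)$ is not separable.

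For reflexivity, the key analytic inputs are Hölder's inequality $\int_E|fg|\,dx\le 2\|f\|_\pp\|g\|_\cpp$ and the norm-conjugate (associate-space) formula $\|f\|_\pp\approx\sup\{\int_E|fg|\,dx:\|g\|_\cpp\le 1\}$, which together identify the associate space of $\Lpp(E)$ with $\Lcpp(E)$ up to equivalence of norms. When $p_+<\infty$, $\Lpp(E)$ has absolutely continuous norm --- if $A_j\downarrow\emptyset$ then $\|f\chi_{A_j}\|_\pp\to 0$, again by dominated convergence for the modular --- and a Banach function space with absolutely continuous norm has dual equal to its associate space, so $(\Lpp(E))^*=\Lcpp(E)$ with equivalent norms. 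If in addition $1<p_-$, the conjugate exponent satisfies $(p')_+=(p_-)'<\infty$, so the same reasoning gives $(\Lcpp(E))^*=\Lpp(E)$; hence the canonical embedding $\Lpp(E)\hookrightarrow(\Lpp(E))^{**}$ is onto and $\Lpp(E)$ is reflexive. For the converse, recall that a Banach function space is reflexive if and only if both it and its associate space have absolutely continuous norm; since $\Lpp(E)$ has absolutely continuous norm exactly when $p_+<\infty$ and $\Lcpp(E)$ exactly when $p_->1$, reflexivity forces $1<p_-\le p_+<\infty$. Equivalently, one exhibits an isomorphic copy of $\ell^1$ in $\Lpp(E)$ when $p_-=1$ and of $c_0$ when $p_+=\infty$, neither of which can sit inside a reflexive space.

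I expect the main obstacle to be the two pieces of general machinery invoked in the reflexivity step: the norm-conjugate formula, which rests on a converse-Hölder/optimization argument showing the associate norm is comparable to the Luxemburg norm, and the equivalence between $p_+<\infty$ and absolute continuity of the norm, whose "only if" direction requires constructing a function whose tails do not vanish in norm. Both are classical and are carried out in detail in \cite{DCUVLS}; the argument above only indicates how they assemble into the three stated conclusions.
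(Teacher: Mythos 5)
Your sketch is correct: the paper gives no proof of this theorem, citing \cite{DCUVLS} directly, and your outline (convexity of the modular plus the unit-ball and Fatou properties for the Banach space structure, truncation/simple-function approximation for separability with the $L^\infty(E_\infty)$ embedding for the converse, and the associate-space/absolutely-continuous-norm characterization for reflexivity) is precisely the standard argument carried out there. No gaps beyond the two technical points you already flag as being handled in the reference.
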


The previous theorem can be extended to weighted variable Lebesgue
spaces. This will be useful when proving facts variable exponent
spaces of  vector-valued functions.   The following result was proved
in~\cite{diening-harjulehto-hasto-ruzicka2010}.  The setting there is
slightly different as they considered the spaces $L^\pp(\mu)$ where
$\mu$ is a measure.  However, if we let $d\mu = v^\pp dx$, then their
results immediately transfer into our setting, since with our
hypothesis $\mu$ is a $\sigma$-finite measure when $p_+<\infty$ (which is needed to
prove separability).

\begin{theorem}\label{weightedLpp-BSR}
  Let $\pp \in \Pp(E)$ and suppose $v\in L^\pp(E)$. Then:
  \begin{enumerate}
  \item $\Lpp(v;E)$ is a Banach space.
  \item  $\Lpp(v;E)$ is separable if $p_+ < \infty$.
  \item  $\Lpp(v;E)$ is reflexive if $1 < p_- \leq p_+ < \infty$.
  \end{enumerate}
\end{theorem}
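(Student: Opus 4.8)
The plan is to reduce everything to the unweighted statement Theorem~\ref{Lpp-BSR} via the natural isometry, and to supply the one piece that genuinely needs the weighted setting, namely the identification of $\Lpp(v;E)$ with an $L^\pp(\mu)$ space for a $\sigma$-finite measure so that the results of \cite{diening-harjulehto-hasto-ruzicka2010} apply. First I would observe that the map $T : f \mapsto fv$ is a linear bijection from $\Lpp(v;E)$ onto a subspace of $\Lpp(E)$, and by the very definition $\|f\|_{L^\pp(v;E)} = \|Tf\|_{L^\pp(E)}$, so $T$ is an isometry onto its image. The image is in fact all of $\Lpp(E)$ up to the set where $v=0$: since $v\in L^\pp(E)$ we have $v<\infty$ a.e., and on $\{v>0\}$ one recovers $f = (Tf)/v$. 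Thus, modulo the measure-zero issues at $\{v=0\}$ (which force us to work with $d\mu = v^\pp\,dx$ rather than naively dividing), $\Lpp(v;E)$ is isometrically isomorphic to $\Lpp(\mu)$ with $d\mu = v^\pp\,dx$; completeness (1) then follows immediately from completeness of $L^\pp(\mu)$, which is part of the general Banach-space theory in \cite{DCUVLS} or \cite{diening-harjulehto-hasto-ruzicka2010}.

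For (2), separability, I would invoke the corresponding theorem from \cite{diening-harjulehto-hasto-ruzicka2010} for $L^\pp(\mu)$; the key hypothesis there is that $\mu$ is $\sigma$-finite, and this is exactly where $p_+<\infty$ enters. Indeed, $\mu(E) = \int_E v^\pp\,dx \le \int_{\{v\le 1\}} 1\,dx + \int_{\{v>1\}} v^{p_+}\,dx$, and since $E$ is bounded and $v\in L^\pp(E)$ with $p_+<\infty$ implies $v\in L^{p_+}(E)$ locally (more precisely $v\in L^{p_-}(E)$ and the modular $\rho_{\pp,E}(v)<\infty$ controls $\int v^\pp$), one gets $\mu(E)<\infty$, so $\mu$ is even finite, hence $\sigma$-finite. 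For (3), reflexivity, again I would cite the $L^\pp(\mu)$ result of \cite{diening-harjulehto-hasto-ruzicka2010}, which gives reflexivity precisely when $1<p_-\le p_+<\infty$; alternatively, reflexivity transfers through any isometric isomorphism of Banach spaces, so reflexivity of $\Lpp(v;E)$ is equivalent to reflexivity of the closed subspace $T(\Lpp(v;E))\subseteq \Lpp(E)$, and closed subspaces of reflexive spaces are reflexive, so this also follows directly from Theorem~\ref{Lpp-BSR} once one checks $T(\Lpp(v;E))$ is closed (it is, being isometrically isomorphic to a complete space).

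The main obstacle, such as it is, is bookkeeping at the set $\{v=0\}$: the isometry $f\mapsto fv$ is not injective on functions supported there, so the clean statement is that $\Lpp(v;E)$ ``is'' $\Lpp$ of the measure $v^\pp\,dx$, not of Lebesgue measure restricted to a subset, and one must be slightly careful that the separability/reflexivity citations are stated for measure spaces in the required generality. Since \cite{diening-harjulehto-hasto-ruzicka2010} works exactly in the $L^\pp(\mu)$ framework for $\sigma$-finite $\mu$, and our $\mu$ is finite (hence $\sigma$-finite) whenever $p_+<\infty$, there is nothing further to do; the proof is essentially a verification that the hypotheses of their theorem are met, which we have indicated above.
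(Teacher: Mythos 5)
Your proposal takes the same route as the paper: identify $\Lpp(v;E)$ with $L^\pp(\mu)$ for $d\mu = v^\pp\,dx$ and cite \cite{diening-harjulehto-hasto-ruzicka2010}, with $\sigma$-finiteness of $\mu$ coming from $p_+<\infty$. One small false step: ``$v\in L^\pp(E)$ with $p_+<\infty$ implies $v\in L^{p_+}(E)$'' is not true even on a bounded set (take $p(\cdot)$ small near a point where $v$ blows up); but your parenthetical correction is the right argument, since $\mu(E)=\int_E v^{p(x)}\,dx = \rho_{\pp,E}(v)$, which is finite whenever $\|v\|_{\Lpp(E)}<\infty$ and $p_+<\infty$ by Proposition~\ref{mod-norm:ineq}, and that alone gives $\mu$ finite. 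You also gesture at a cleaner, self-contained alternative that you leave incomplete: $T\colon f\mapsto fv$ is an isometry of $\Lpp(v;E)$ onto the subspace $\{g\in\Lpp(E): g=0 \text{ a.e.\ on }\{v=0\}\}$, which can be seen to be \emph{closed} directly (norm convergence in $\Lpp(E)$ with $p_+<\infty$ gives a.e.\ convergence of a subsequence, so the vanishing condition passes to the limit), at which point (1)--(3) all follow from Theorem~\ref{Lpp-BSR} and the stability of completeness, separability, and reflexivity under passing to closed subspaces, with no appeal to the measure-theoretic framework of \cite{diening-harjulehto-hasto-ruzicka2010}; as written, though, your closedness claim ``being isometrically isomorphic to a complete space'' presupposes (1) and so is circular unless you first prove (1) by the citation, which is what you in fact do.
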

%

A useful result about variable Lebesgue spaces is the extension of
H\"older's inequality to the variable exponent norm.

\begin{theorem}{(\cite[Theorem 2.26]{DCUVLS}, H\"older's inequality)}\label{Holder}
  Given $\pp, \cpp \in \Pp(E)$ with $ \frac{1}{p(x)} + \frac{1}{p'(x)} =1$ for a.e. $x\in E$, if $f \in \Lpp(E)$ and $g \in \Lcpp(E)$, then $fg \in L^1(E)$ and 
  \[\int_E |f(x) g(x)|dx \leq K_\pp \|f\|_{L^\pp(E)} \|g\|_{L^\cpp(E)},\]
  where $K_\pp\leq 4$ is a constant depending only on $\pp$.
\end{theorem}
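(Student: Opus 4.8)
The plan is to derive the inequality from a pointwise application of Young's inequality together with the standard equivalence between the modular $\rho_{\pp,E}$ and the Luxemburg norm, namely that $\|h\|_{\Lpp(E)}\le 1$ if and only if $\rho_{\pp,E}(h)\le 1$ (the nontrivial direction following from monotone convergence). First I would handle the degenerate cases: if $\|f\|_{\Lpp(E)}=0$ then $f=0$ a.e. and both sides of the inequality vanish, likewise if $\|g\|_{\Lcpp(E)}=0$, and if either norm is $+\infty$ there is nothing to prove. Assuming then that both norms are finite and positive, set $F=f/\|f\|_{\Lpp(E)}$ and $G=g/\|g\|_{\Lcpp(E)}$, so that $\|F\|_{\Lpp(E)}=\|G\|_{\Lcpp(E)}=1$ by homogeneity of the norm; by the equivalence just recalled, $\rho_{\pp,E}(F)\le 1$ and $\rho_{\cpp,E}(G)\le 1$, and since the $L^\infty$ terms are among the summands of these modulars we also get $\|F\|_{L^\infty(E_\infty)}\le 1$ and $\|G\|_{L^\infty(\{x:\,p(x)=1\})}\le 1$. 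It then suffices to show $\int_E|FG|\,dx\le 2$, which on multiplying back through gives the theorem (with $K_\pp=2$, a fortiori $K_\pp\le 4$).

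Next I would partition $E$ into the measurable sets $A_0=\{x:1<p(x)<\infty\}$, $A_1=\{x:p(x)=1\}$ (on which $p'(x)=\infty$), and $A_\infty=\{x:p(x)=\infty\}=E_\infty$ (on which $p'(x)=1$), and estimate $|FG|$ on each. On $A_0$, the elementary Young inequality $ab\le a^{p(x)}/p(x)+b^{p'(x)}/p'(x)$ applied with $a=|F(x)|$, $b=|G(x)|$ gives $|FG|\le |F|^{p(x)}/p(x)+|G|^{p'(x)}/p'(x)$. On $A_1$, the bound $\|G\|_{L^\infty(A_1)}\le 1$ yields $|FG|\le|F|$ a.e., and symmetrically on $A_\infty$ the bound $\|F\|_{L^\infty(A_\infty)}\le 1$ yields $|FG|\le|G|$ a.e.

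Finally I would integrate over each piece and regroup the $F$- and $G$-contributions separately. Using that $p(x)=1$ on $A_1$ (so $|F|=|F|^{p(x)}/p(x)$ there) and $p(x)\ge 1$ on $A_0\cup A_1=E\bk E_\infty$,
\[
\int_{A_0}\frac{|F|^{p(x)}}{p(x)}\,dx+\int_{A_1}|F|\,dx
\le\int_{E\bk E_\infty}|F|^{p(x)}\,dx\le\rho_{\pp,E}(F)\le 1,
\]
and, symmetrically, using $p'(x)=1$ on $A_\infty$ and $p'(x)\ge 1$ on $A_0\cup A_\infty=E\bk A_1$,
\[
\int_{A_0}\frac{|G|^{p'(x)}}{p'(x)}\,dx+\int_{A_\infty}|G|\,dx
\le\int_{E\bk A_1}|G|^{p'(x)}\,dx\le\rho_{\cpp,E}(G)\le 1.
\]
Adding these two estimates gives $\int_E|FG|\,dx\le 2$; in particular $FG$, hence $fg$, lies in $L^1(E)$, and the theorem follows.

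The step requiring the most care is the bookkeeping at the two degenerate ends of the exponent range: one must track that $E_\infty=\{p=\infty\}$ is the set carrying the $L^\infty$ summand of the $\pp$-modular while $\{p=1\}$ carries the $L^\infty$ summand of the $\cpp$-modular, and then observe that the pointwise identities $|F|=|F|^{p(x)}$ on $\{p=1\}$ and $|G|=|G|^{p'(x)}$ on $\{p=\infty\}$ are precisely what allow those $L^\infty$ estimates to be folded back into the modular bounds above. The contribution from $A_0$, where $1<p(x)<\infty$, is routine once Young's inequality is invoked. The only auxiliary fact used, the norm--modular equivalence, is standard and can be cited from \cite{DCUVLS}.
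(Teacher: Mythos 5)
Your proof is correct; note, however, that the paper does not prove this statement at all but imports it verbatim from \cite[Theorem 2.26]{DCUVLS}, and your argument is essentially the standard proof given there: normalize, split $E$ into $\{1<p<\infty\}$, $\{p=1\}$, $\{p=\infty\}$, apply Young's inequality on the first piece and the $L^\infty$ bounds coming from the modular on the other two. Your regrouping, which absorbs the factors $1/p(x)$ and the endpoint pieces directly into the two modulars rather than estimating them separately, even yields the sharper constant $K_\pp=2$, comfortably within the stated bound $K_\pp\leq 4$.
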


The next two results are technical lemmas that we will need in the
proof of our main result.

\begin{prop}{\cite[Corollary 2.23]{DCUVLS}}\label{mod-norm:ineq}
    Given $\pp \in \Pp(E)$, suppose $|E_\infty| = 0$. If $\|f\|_{\pp} \geq 1$, then 
\[ \|f\|_{\pp}^{p_-} \leq \rho(f) \leq \|f\|_\pp^{p_+}.\]
If $0 \leq \|f\|_\pp < 1$, then 
\[\|f\|_{\pp}^{p_+} \leq \rho(f) \leq \|f\|_{\pp}^{p_-}.\]
\end{prop}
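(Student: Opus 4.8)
\textbf{Proof plan for Proposition~\ref{mod-norm:ineq}.}
The plan is to exploit the defining relation between the norm and the modular, namely that $\|f\|_\pp = \inf\{\mu>0 : \rho(f/\mu)\le 1\}$, together with the homogeneity behavior of the modular under scaling. The key elementary fact is the following: since $|E_\infty|=0$, the modular is simply $\rho(g) = \int_E |g(x)|^{p(x)}\,dx$, and for any scalar $\lambda$ with $0<\lambda\le 1$ we have $\lambda^{p_+}\rho(g) \le \rho(\lambda g) \le \lambda^{p_-}\rho(g)$, while for $\lambda\ge 1$ we have $\lambda^{p_-}\rho(g) \le \rho(\lambda g) \le \lambda^{p_+}\rho(g)$. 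These inequalities follow pointwise from $|\lambda g(x)|^{p(x)} = \lambda^{p(x)}|g(x)|^{p(x)}$ and the monotonicity of $t\mapsto \lambda^t$ (increasing if $\lambda\ge1$, decreasing if $0<\lambda<1$), then integrating over $E$.

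The second ingredient I would use is the fact (a standard consequence of the unit-ball property of the variable Lebesgue space, valid when $p_+<\infty$, and in particular when $|E_\infty|=0$ so the $L^\infty$ piece is absent) that $\rho(f/\|f\|_\pp) = 1$ whenever $0<\|f\|_\pp<\infty$; that is, the infimum defining the norm is attained. I would cite this from \cite{DCUVLS} or derive it quickly from the left-continuity of $\mu\mapsto \rho(f/\mu)$ and the dominated/monotone convergence theorem. Granting this, write $\|f\|_\pp = t$ and apply the scaling inequalities to $g = f/t$ with $\lambda = t$: when $t\ge 1$, we get $t^{p_-}\rho(f/t) \le \rho(f) \le t^{p_+}\rho(f/t)$, and since $\rho(f/t)=1$ this is exactly $\|f\|_\pp^{p_-} \le \rho(f) \le \|f\|_\pp^{p_+}$; when $0\le t<1$, the reversed direction of the scaling inequality gives $t^{p_+}\rho(f/t) \le \rho(f) \le t^{p_-}\rho(f/t)$, i.e.\ $\|f\|_\pp^{p_+} \le \rho(f)\le \|f\|_\pp^{p_-}$. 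The degenerate case $\|f\|_\pp=0$ forces $f=0$ a.e., so both bounds hold trivially; and if $\|f\|_\pp=\infty$ there is nothing to prove since the left inequality in the first case becomes vacuous while the right side is also infinite.

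The main obstacle is making rigorous the claim that $\rho(f/\|f\|_\pp)=1$ rather than merely $\le 1$. This requires knowing that $\rho$ has no ``jump'' at the critical scaling value, which is where the hypothesis $p_+<\infty$ (guaranteed here by $|E_\infty|=0$, since then $p(x)<\infty$ a.e.\ and the modular is an honest integral) is essential: it ensures $\mu\mapsto\rho(f/\mu)$ is continuous on $(0,\infty)$ and tends to $+\infty$ as $\mu\to 0^+$ when $f\ne 0$. If one wanted to avoid invoking attainment of the infimum, an alternative is a two-sided $\varepsilon$-argument: for any $\varepsilon>0$, $\rho(f/(\|f\|_\pp+\varepsilon))\le 1$ and $\rho(f/(\|f\|_\pp-\varepsilon))>1$, apply the scaling inequalities to each, and let $\varepsilon\to 0$; this yields the same bounds without citing attainment, at the cost of slightly more bookkeeping. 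Either way, the substance of the argument is the elementary scaling estimate for the integral modular, and everything else is routine.
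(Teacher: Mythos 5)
The paper offers no proof of this proposition at all --- it is quoted directly from \cite[Corollary~2.23]{DCUVLS} --- so the only comparison available is with the standard textbook argument, which is exactly the scaling-plus-normalization argument you give. Your scaling inequalities for the modular are correct, and the reduction to $g=f/\|f\|_\pp$ is the right idea.

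There is, however, one genuine error in your main line of reasoning: you claim that $|E_\infty|=0$ guarantees $p_+<\infty$, ``since then $p(x)<\infty$ a.e.'' This implication is false: the exponent can be finite almost everywhere and still unbounded (e.g.\ $p(x)=1/|x|$ on $(0,1)$ has $E_\infty=\emptyset$ but $p_+=\infty$). Consequently the normalization $\rho(f/\|f\|_\pp)=1$ is not available under the stated hypothesis alone; indeed the paper's own Proposition~\ref{normalized-mod} records this as an \emph{if and only if} with $p_+(E\setminus E_\infty)<\infty$, so you cannot get it for free from $|E_\infty|=0$. The fix is already in your last paragraph: the two-sided $\varepsilon$-argument (for $\mu>\|f\|_\pp$ one has $\rho(f/\mu)\le 1$, for $0<\mu<\|f\|_\pp$ one has $\rho(f/\mu)>1$; apply the scaling bounds and let $\mu\to\|f\|_\pp$) needs no attainment of the infimum and proves the statement as written, with the conventions $\|f\|_\pp^{p_+}=\infty$ when $p_+=\infty$ and $\|f\|_\pp>1$ making the relevant bounds vacuous. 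You should promote that argument to the main proof, or else add the hypothesis $p_+<\infty$ --- which does hold everywhere the paper actually invokes this proposition. The degenerate cases $\|f\|_\pp=0$ and $\|f\|_\pp=\infty$ are handled adequately.
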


\begin{prop}{(\cite[Proposition~2.21]{DCUVLS})}\label{normalized-mod}
  Given $\pp \in \Pp(E)$, for all nontrivial $f \in \Lpp(E)$,
  $\rho(f/\|f\|_\pp) = 1$ if and only if $p_+(E/E_\infty) < \infty$.
\end{prop}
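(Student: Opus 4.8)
The plan is to prove Proposition~\ref{normalized-mod} by exploiting the basic relationship between the modular $\rho_\pp$ and the Luxemburg norm $\|\cdot\|_\pp$, together with a continuity/monotonicity analysis of the function $\lambda \mapsto \rho(f/\lambda)$. Fix a nontrivial $f \in \Lpp(E)$ and set $N = \|f\|_\pp$, so $0 < N < \infty$. Write $E_0 = E \setminus E_\infty$ and decompose $\rho(f/\lambda) = \int_{E_0} |f(x)/\lambda|^{p(x)}\,dx + \|f/\lambda\|_{L^\infty(E_\infty)}$. The key elementary facts I would use are: (i) $\lambda \mapsto \rho(f/\lambda)$ is non-increasing on $(0,\infty)$, and strictly decreasing on the set where it is finite and positive; (ii) by the definition of the infimum in the Luxemburg norm, $\rho(f/\lambda) \le 1$ for every $\lambda > N$, hence by monotone convergence (letting $\lambda \downarrow N$) one always has $\rho(f/N) \le 1$; and (iii) $\rho(f/\lambda) \ge 1$ for every $\lambda < N$, since otherwise $N$ would not be the infimum. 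So the only question is whether the inequality $\rho(f/N) \le 1$ is actually an equality, i.e., whether the ``left-continuity gap'' vanishes.

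For the forward direction, suppose $p_+(E_0) < \infty$. Then on $E_0$ the integrand $|f(x)/\lambda|^{p(x)}$ is dominated, for $\lambda$ near $N$ from below (say $\lambda \ge N/2$), by $2^{p_+(E_0)}|f(x)/N|^{p(x)} \in L^1(E_0)$ by (iii) applied at, e.g., $\lambda = N/2 < N$ ... more carefully, I would pick any $\lambda_1 < N$ with $\rho(f/\lambda_1) < \infty$ (such $\lambda_1$ exists because $\rho(f/\lambda) \to$ something $\le 1$ as $\lambda \uparrow N$ would need finiteness; if no such $\lambda_1$ exists then $\rho(f/\lambda) = \infty$ for all $\lambda < N$, contradicting $\rho(f/\lambda) \le 1$ for $\lambda > N$ combined with... actually I must handle this), and then dominate the integrand for $\lambda \in [\lambda_1, N]$ by $(N/\lambda_1)^{p_+(E_0)} |f(x)/N|^{p(x)}$, which is integrable. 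The $L^\infty$ piece $\|f/\lambda\|_{L^\infty(E_\infty)} = N^{-1}\|f\|_{L^\infty(E_\infty)}\cdot(N/\lambda)$ is manifestly continuous in $\lambda$. Hence by dominated convergence $\lambda \mapsto \rho(f/\lambda)$ is continuous at $\lambda = N$, so $\rho(f/N) = \lim_{\lambda \downarrow N}\rho(f/\lambda) \le 1$ and $\rho(f/N) = \lim_{\lambda \uparrow N}\rho(f/\lambda) \ge 1$, giving $\rho(f/N) = 1$. (I would double-check the degenerate case $|E_0| = 0$ separately, where $\rho(f/\lambda) = N^{-1}\|f\|_{L^\infty(E_\infty)}(N/\lambda)$ is continuous and the conclusion is immediate and consistent with $p_+(E_0) = -\infty < \infty$ vacuously.)

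For the converse, suppose $p_+(E_0) = \infty$. I would construct an explicit counterexample function $f$ with $\|f\|_\pp = 1$ but $\rho(f) < 1$. Take a decreasing sequence of sets $A_k \subseteq E_0$ with $0 < |A_k| < \infty$ and $p(x) \ge 2^k$ on $A_k$ (possible since $p$ is unbounded on $E_0$), arranged to be pairwise disjoint, and set $f = \sum_k c_k \chi_{A_k}$ with constants $c_k$ slightly less than $1$, chosen so that $\sum_k c_k^{p(x)}|A_k|$-type sums converge to something strictly less than $1$ while $\|f\|_\pp = 1$; concretely, one arranges that $\rho(f/\lambda) = \infty$ for all $\lambda < 1$ (by making some $c_k/\lambda > 1$ raised to huge powers on sets of positive measure) yet $\rho(f) \le 1/2$, so that $\|f\|_\pp = \inf\{\lambda : \rho(f/\lambda)\le 1\} = 1$ while $\rho(f/\|f\|_\pp) = \rho(f) = 1/2 \ne 1$. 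This shows the ``only if'' direction by contraposition. Since this is the standard reference result \cite[Proposition~2.21]{DCUVLS}, I would in practice simply cite it; but the self-contained argument above is the route.

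The main obstacle is bookkeeping in the converse construction: one must simultaneously force $\rho(f/\lambda) = +\infty$ for every $\lambda < 1$ (to pin the norm at exactly $1$) while keeping $\rho(f) < 1$, and verifying the first condition requires that for each $\lambda < 1$ there is some index $k$ with $(c_k/\lambda)^{p(x)} \ge 1$ on a set of positive measure with the contribution blowing up — this is where the unboundedness of $p$ is used in an essential, quantitative way. In the forward direction the only subtlety is ruling out the pathology that $\rho(f/\lambda) = \infty$ for all $\lambda < N$, but this cannot happen when $p_+(E_0) < \infty$ since then $\rho(f/\lambda_1) \le (N/\lambda_1)^{p_+(E_0)}\rho(f/N)' $ ... more precisely $\rho(g) \le 1$ at some scale forces finiteness at all nearby scales by the bounded-exponent estimate, so finiteness propagates; I would record this as a short preliminary observation before invoking dominated convergence.
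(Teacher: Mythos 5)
First, a point of reference: the paper does not prove this proposition; it is quoted directly from \cite[Proposition~2.21]{DCUVLS}, so there is no internal proof to compare against, and simply citing it (as you note you would do in practice) is exactly what the authors do. Your forward direction is the standard argument and is sound: monotone convergence as $\lambda\downarrow N=\|f\|_\pp$ gives $\rho(f/N)\le 1$ with no hypothesis on $p_+$; the definition of the infimum gives $\rho(f/\lambda)>1$ for $\lambda<N$; and the pointwise bound $(|f(x)|/\lambda)^{p(x)}\le (N/\lambda)^{p_+(E\setminus E_\infty)}(|f(x)|/N)^{p(x)}$, whose right-hand side is integrable precisely because $\rho(f/N)\le 1$, both disposes of the ``infinite for all $\lambda<N$'' pathology you worry about and licenses dominated convergence from the left. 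So $\rho(f/N)=\lim_{\lambda\uparrow N}\rho(f/\lambda)\ge 1$, and equality follows.

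The genuine gap is in the converse construction. Your ansatz $f=\sum_k c_k\chi_{A_k}$ with constants $c_k$ \emph{slightly less than} $1$ cannot be made to work in general, because the measures $|A_k|$ are not free parameters: $A_k$ must lie in a level set $\{x: p(x)\ge n_k\}$, and $|\{p>M\}|$ may decay arbitrarily fast, e.g.\ like $e^{-e^{M}}$. With $c_k<1$ and $A_k\subseteq\{n_k\le p<n_k+1\}$ one has $\int_{A_k}(c_k/\lambda)^{p(x)}dx\le \lambda^{-(n_k+1)}|A_k|$, and if $|A_k|\le e^{-e^{n_k}}$ these contributions sum to something finite and small for every fixed $\lambda$ bounded away from $0$; you therefore cannot force $\rho(f/\lambda)>1$ for all $\lambda<1$, the norm is not pinned at $1$, and no jump of the modular is produced. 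The repair is to let the value of $f$ depend on $p(x)$ pointwise. Choose $n_k$ with $n_k/k\to\infty$ and disjoint sets $G_k\subseteq\{n_k\le p<n_{k+1}\}$ of positive measure (possible since infinitely many annuli $\{j\le p<j+1\}$ have positive measure when $p_+(E\setminus E_\infty)=\infty$), and set $f=(2^{-k-1}/|G_k|)^{1/p(x)}$ on $G_k$. Then $\int_{G_k}f^{p(x)}dx=2^{-k-1}$ independently of $|G_k|$, so $\rho(f)=\tfrac12$, while for every $\lambda<1$ one has $\rho(f/\lambda)\ge\sum_k\lambda^{-n_k}2^{-k-1}=\infty$ because $n_k/k\to\infty$; hence $\|f\|_\pp=1$ and $\rho(f/\|f\|_\pp)=\tfrac12\ne 1$. (Equivalently, constants on thin annuli do work, but they must be taken as $c_k=(2^{-k-1}/|G_k|)^{1/n_k}$, which are typically \emph{much larger} than $1$, not slightly less.) This is precisely the quantitative use of unboundedness that your sketch identifies as the obstacle but does not overcome.
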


The next result generalizes the trivial identity $\|f^{p-1}\|_{p'} =
\|f\|_p^{p-1}$ to the setting of variable Lebesgue spaces.

\begin{theorem}\label{norm-ineq}  Let $E \subseteq \R^n$ and $\pp \in \Pp(E)$ with $1 < p_- \leq p_+ < \infty$, and $f$ be measurable on $E$. Then, $\||f|^{\pp-1}\|_{\Lcpp(E)} $ is finite if and only if $\|f\|_{\Lpp(E)}$ is finite. In particular,
    \begin{align}
\|f\|_{\Lpp(E)}^{l_* -1} \leq \||f|^{\pp-1}\|_{\Lcpp(E)} \leq \|f\|_{\Lpp(E)}^{b_*-1}
    \label{norm:inq2}
    \end{align}
where $l_*$ and $b*$ are given by
\[ l_* = \begin{cases} p_+ & \text{ if } \|f\|_{\Lpp(E)} < 1\\
				p_- & \text{ if } \|f\|_{\Lpp(E)} \geq 1\end{cases} \hskip2cm b_* = \begin{cases} p_- & \text{ if } \| f\|_{\Lpp(E)} < 1\\
																			p_+ & \text{ if } \|f\|_{\Lpp(E)} \geq 1.\end{cases}.\]
\end{theorem}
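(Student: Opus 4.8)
The argument rests on a single modular identity together with the norm--modular relations already recorded above. Since $1<p_-\le p_+<\infty$ we have $|E_\infty|=0$, so $\rho_{\pp,E}$ is just an integral; the same holds for $\rho_{\cpp,E}$ because $p_->1$ forces $|\{x:p'(x)=\infty\}|=0$ and $1<(p')_-=\tfrac{p_+}{p_+-1}\le (p')_+=\tfrac{p_-}{p_--1}<\infty$. Writing $g:=|f|^{\pp-1}$ and using $p'(x)=p(x)/(p(x)-1)$ a.e., for every $\mu>0$
\[
\rho_{\cpp,E}(g/\mu)=\int_E\big(|f(x)|^{p(x)-1}\big)^{p'(x)}\mu^{-p'(x)}\,dx=\int_E |f(x)|^{p(x)}\,\mu^{-p'(x)}\,dx,
\]
so in particular, taking $\mu=1$, $\rho_{\cpp,E}(g)=\rho_{\pp,E}(f)$.

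For the first assertion, recall that for a bounded exponent a function lies in the variable Lebesgue space if and only if its modular is finite (this follows from Proposition~\ref{mod-norm:ineq} together with dominated convergence, using $p_+<\infty$, resp. $(p')_+<\infty$). Hence $\|f\|_{\Lpp(E)}<\infty$ iff $\rho_{\pp,E}(f)<\infty$ iff $\rho_{\cpp,E}(g)<\infty$ iff $\|g\|_{\Lcpp(E)}=\||f|^{\pp-1}\|_{\Lcpp(E)}<\infty$, which is the claimed equivalence.

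For the quantitative bounds in \eqref{norm:inq2} we may assume $f$ is nontrivial, so that $t:=\|f\|_{\Lpp(E)}$ and $s:=\|g\|_{\Lcpp(E)}$ both lie in $(0,\infty)$. By Proposition~\ref{normalized-mod} (applicable since $p_+<\infty$), $\rho_{\pp,E}(f/t)=1$ and $\rho_{\cpp,E}(g/s)=1$; moreover $\mu\mapsto\rho_{\cpp,E}(g/\mu)=\int_E|f|^{p(x)}\mu^{-p'(x)}\,dx$ is strictly decreasing on the set where it is finite, since $|f|>0$ on a set of positive measure and $p'(x)>0$ a.e. The remaining input is the elementary comparison
\[
(p_--1)\,p'(x)\le p(x)\le (p_+-1)\,p'(x)\qquad\text{for a.e. } x\in E,
\]
which is immediate from $p'(x)(p(x)-1)=p(x)$ and $p_--1\le p(x)-1\le p_+-1$. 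Substituting $\mu=t^{\,b_*-1}$ and $\mu=t^{\,l_*-1}$ into the modular identity and using this comparison, together with whether $t\ge 1$ or $t<1$, to compare $t^{-(p_\pm-1)p'(x)}$ with $t^{-p(x)}$ pointwise, one gets
\[
\rho_{\cpp,E}\!\left(g/t^{\,b_*-1}\right)\le 1=\rho_{\cpp,E}(g/s)\le\rho_{\cpp,E}\!\left(g/t^{\,l_*-1}\right);
\]
strict monotonicity then inverts these to $t^{\,l_*-1}\le s\le t^{\,b_*-1}$, i.e. \eqref{norm:inq2}.

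I expect the only delicate point to be the bookkeeping in this last step: there are effectively four cases (the two exponents $b_*-1$ and $l_*-1$ against the two regimes $t\ge 1$ and $t<1$), and in each one must check that the pointwise inequality between powers of $t$ points in the direction needed to sandwich the relevant modular by $1$. It is also worth stating the strict monotonicity of $\mu\mapsto\rho_{\cpp,E}(g/\mu)$ carefully, since it is precisely what licenses passing from inequalities between modulars back to inequalities between $s$ and powers of $t$.
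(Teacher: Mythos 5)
Your proof is correct and is essentially the paper's modular-comparison argument: both hinge on the pointwise identity $\bigl(|f|^{p(x)-1}\bigr)^{p'(x)}=|f|^{p(x)}$ and on Proposition~\ref{normalized-mod}, with a pointwise exponent comparison to sort out the cases. The one genuine difference is the choice of normalization, and it works in your favor. For the lower bound the paper normalizes by $\mu_{p'}=\||f|^{\pp-1}\|_{\Lcpp(E)}$ and asserts $\mu_{p'}^{1/(l_*-1)}\ge\mu_{p'}^{1/(p(x)-1)}$ a.e.; since $l_*$ is defined by cases on $\|f\|_{\Lpp(E)}$ while the base of the power is $\mu_{p'}$, that step quietly uses the fact that $\mu_{p'}$ and $\|f\|_{\Lpp(E)}$ lie on the same side of $1$ (true, via $\rho_{\cpp,E}(|f|^{\pp-1})=\rho_{\pp,E}(f)$ and Proposition~\ref{mod-norm:ineq}, but not made explicit there). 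You normalize instead by $t=\|f\|_{\Lpp(E)}$, the very quantity to which the case distinctions for $l_*$ and $b_*$ are keyed, and compare exponents via $(p_--1)p'(x)\le p(x)\le(p_+-1)p'(x)$, so no such extra observation is required; strict monotonicity of $\mu\mapsto\rho_{\cpp,E}(|f|^{\pp-1}/\mu)$ then plays the role that the infimum definition of the norm plays in the paper. Your opening finiteness paragraph is sound but could be omitted, since, as in the paper, the equivalence of finiteness already falls out of the two-sided bound \eqref{norm:inq2} once it is established.
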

\begin{proof} 
Let $\mu_{p'} = \| |f|^{\pp-1} \|_{\Lcpp(E)}$ and assume $\mu_{p'} < \infty$. Then $\mu_{p'}^{1/(l_*-1)} \geq \mu_{p'}^{1/(p(x)-1)}$ for almost every $x$, and so
\begin{multline*}
  \int_{E} \left( \frac{|f(x)|}{\mu_{p'}^{1/(l_*-1)}} \right)^{p(x)} dx
   \leq  \int_E \left( \frac{|f(x)|}{\mu_{p'}^{1/(p(x)-1)}}\right)^{p(x)} dx\\
 	  \leq \int_E \left( \frac{|f(x)|^{p(x)-1} }{\mu_{p'}}\right)^{p'(x)} dx
	  = \rho_{\cpp, E}\left( \frac{|f|^{\pp-1}}{\mu_{p'}}\right).
        \end{multline*}
 Since $p_- >1$, we have $\esssup p'(x) < \infty$. Thus, by Proposition \ref{normalized-mod}, the modular above equals 1. Hence, by definition of the $\Lpp(E)$ norm, $\|f\|_{\Lpp(E)} \leq \mu_{p'}^{1/(l_*-1)}$, or equivalently,
\[ \|f\|_{\Lpp(E)}^{l_*-1} \leq \||f|^{\pp-1}\|_{\Lcpp(E)} < \infty.\]

Now let $\mu_p = \|f\|_{\Lpp(E)}$, and assume $\mu_p <\infty$. Then
the proof is essentially the same: 
$\mu_p^{b_* -1} \geq \mu_p^{1/(p'(x)-1)}$ a.e., and so
\begin{multline*}
  \int_E \left( \frac{|f(x)|^{p(x)-1} }{\mu_p^{b_*-1}}\right)^{p'(x)} dx
   \leq \int_E \left( \frac{|f(x)|^{p(x)-1}}{\mu_{p}^{1/(p'(x)-1)}}\right)^{p'(x)} dx \\
	= \int_E \left( \frac{|f(x)|}{\mu_p}\right)^{p(x)} dx 
	 = \rho_{\pp,E} \left( \frac{f}{\mu}\right).
       \end{multline*}
Since $p_+ < \infty$, by Proposition \ref{normalized-mod} the above modular equals 1. Hence, 
\[\||f|^{\pp-1} \|_{\Lcpp(E)} \leq \mu_p^{b_*-1} =  \|f\|_{\Lpp(E)}^{b_*-1}<\infty.\]
\end{proof}

\begin{remark}
  The definitions of the exponents $l_*$ and $b_*$ clearly depend on
  the given function. It will be clear from context what function
  these exponents are dependent on, so we will not express this
  explicitly in our proofs.  
\end{remark}

%
%
%
      %
%
%

\medskip

We now define the matrix-weighted, vector-valued Lebesgue space $\LQpp(E)$.

\begin{definition}
  Given a measurable matrix function $Q: E \rightarrow \calS_n$ and
  $\pp \in \Pp(E)$, define the matrix-weighted variable Lebesgue space
  $\LQpp(E)$ to be the collection of all measurable, vector-valued
  functions $\vf : E \rightarrow \R^n$ satisfying
\[ \| \vf\|_{\LQpp(E)} = \| |\sqrt{Q}\vf|\|_{L^\pp(E)} < \infty.\]
\end{definition}

To construct the $Q$-weighted Sobolev spaces, and to prove existence
results for the PDE \eqref{nprob}, we show that $\LQpp(E)$ is a
separable, reflexive Banach space.

\begin{theorem}\label{LQsepcomplete} Let $Q:E\rightarrow \calS_n$ be a
  positive semi-definite, self-adjoint, measurable, matrix-valued
  function on $E$
 such that $ \gamma^{1/2} \in \Lpp(E).$
  Then
  $\LQpp(E)$ is a Banach space.  Moreover, it is separable if
  $p_+<\infty$ and  and reflexive if $1<p_-\leq p_+<\infty$.
\end{theorem}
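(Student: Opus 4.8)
\textbf{Proof plan for Theorem~\ref{LQsepcomplete}.}
The plan is to transfer the Banach space, separability, and reflexivity properties from the scalar weighted space $\Lpp(v;E)$ (Theorem~\ref{weightedLpp-BSR}) to the matrix-weighted vector space $\LQpp(E)$ by means of an isometric embedding into a product of such scalar spaces. The key observation is that for $\vf: E \to \R^n$, one has $|\sqrt{Q(x)}\,\vf(x)| \leq \gamma(x)^{1/2}|\vf(x)|$ pointwise, so that $\|\vf\|_{\LQpp(E)} \leq \|\,\gamma^{1/2}|\vf|\,\|_{\Lpp(E)}$; combined with the hypothesis $\gamma^{1/2} \in \Lpp(E)$, this shows that every component $f_j$ of $\vf$ with $\gamma^{1/2} f_j \in \Lpp(E)$ gives a finite seminorm, and conversely it identifies which vector functions have finite $\LQpp$-norm. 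The first step is therefore to verify that $\|\cdot\|_{\LQpp(E)}$ is a genuine norm on $\LQpp(E)$ modulo the usual almost-everywhere identification: homogeneity and the triangle inequality follow from the corresponding properties of $\|\cdot\|_{\Lpp(E)}$ together with the pointwise bound $|\sqrt{Q}(\vf+\g)| \leq |\sqrt{Q}\vf| + |\sqrt{Q}\g|$, and the only subtlety is that $\|\vf\|_{\LQpp(E)} = 0$ forces $\sqrt{Q(x)}\vf(x) = 0$ a.e., i.e.\ $\vf(x) \in \ker Q(x)$ a.e.; one must check this is consistent with the way $\LQpp(E)$ is defined (elements are equivalence classes under $\vf \sim \g$ iff $\sqrt{Q}(\vf - \g) = 0$ a.e.), exactly as in the constant exponent treatment of~\cite{CURE-2017}.

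For completeness, the plan is to show $\LQpp(E)$ is isometrically isomorphic to a closed subspace of $\big(\Lpp(E)\big)^n$ via the map $T\vf = \sqrt{Q}\,\vf = (g_1,\dots,g_n)$ where $g_i = (\sqrt{Q}\vf)_i$, after equipping $(\Lpp(E))^n$ with the norm $\|(g_1,\dots,g_n)\| = \big\|\,|(g_1,\dots,g_n)|\,\big\|_{\Lpp(E)}$ where $|\cdot|$ is the Euclidean norm on $\R^n$; this product norm makes $(\Lpp(E))^n$ a Banach space, separable when $p_+<\infty$, and reflexive when $1<p_-\leq p_+<\infty$, all inherited coordinatewise from Theorem~\ref{weightedLpp-BSR} (or Theorem~\ref{Lpp-BSR}) since the Euclidean norm is equivalent to the $\ell^1$ norm on $\R^n$ with dimensional constants. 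Then $T$ is by construction a linear isometry onto its image, so it remains only to prove that the image $T(\LQpp(E))$ is \emph{closed} in $(\Lpp(E))^n$: if $\sqrt{Q}\vf_k \to \mathbf{G}$ in $(\Lpp(E))^n$, one passes to a subsequence converging a.e.\ (using that norm convergence in $\Lpp$ implies modular convergence and hence a.e.\ convergence along a subsequence, via Theorem~\ref{Lpp-BSR} / standard variable-Lebesgue facts), observes that the a.e.\ limit $\mathbf{G}(x)$ lies in $\overline{\mathrm{Range}\,\sqrt{Q(x)}} = \mathrm{Range}\,\sqrt{Q(x)}$ a.e.\ (a finite-dimensional subspace, hence closed), and then uses a measurable selection for the Moore--Penrose pseudoinverse of $\sqrt{Q(x)}$ to produce $\vf \in \LQpp(E)$ with $T\vf = \mathbf{G}$. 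Completeness, separability, and reflexivity of $\LQpp(E)$ then follow immediately: a closed subspace of a Banach (resp.\ separable, resp.\ reflexive) space has the same property.

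The step I expect to be the main obstacle is establishing that the range of $T$ is closed, and more precisely producing the preimage $\vf$ in a measurable way; this is where the matrix structure genuinely enters and where one cannot simply quote scalar results. The issue is that $\sqrt{Q(x)}$ may be singular on a set of positive measure, so $\vf$ is not determined by $\sqrt{Q}\vf$ and one needs a measurable right inverse. This can be handled by writing $\vf(x) = \sqrt{Q(x)}^{+}\mathbf{G}(x)$ where $\sqrt{Q(x)}^{+}$ denotes the Moore--Penrose pseudoinverse, which is a measurable function of $x$ because $Q$ (hence $\sqrt{Q}$) is measurable and the pseudoinverse is a Borel function of the matrix entries; since $\mathbf{G}(x) \in \mathrm{Range}\,\sqrt{Q(x)}$ a.e., one has $\sqrt{Q(x)}\sqrt{Q(x)}^{+}\mathbf{G}(x) = \mathbf{G}(x)$ a.e., and the pointwise bound $|\vf(x)| \leq \|\sqrt{Q(x)}^{+}\|_{\op}|\mathbf{G}(x)|$ together with $|\sqrt{Q}\vf| = |\mathbf{G}| \in \Lpp(E)$ gives $\vf \in \LQpp(E)$ with finite norm. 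A cleaner alternative that avoids pseudoinverses: show directly that $\LQpp(E)$ is complete by a Cauchy-sequence argument mimicking~\cite{CURE-2017}, extracting an a.e.-convergent subsequence of $\sqrt{Q}\vf_k$ and invoking completeness of the scalar space, and separately deduce reflexivity and separability from the isometric embedding into the reflexive/separable space $(\Lpp(E))^n$ by noting that a \emph{closed} subspace suffices and completeness has already been shown. Either route reduces everything to Theorem~\ref{weightedLpp-BSR} plus the elementary linear algebra of $\sqrt{Q}$.
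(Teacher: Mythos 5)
Your proposal is correct, but it takes a genuinely different route from the paper. The paper uses the measurable eigendecomposition of $Q(x)$ (Lemma~\ref{diagonalize}): it writes $\vf(x)=\sum_j \tilde f_j(x)\bv_j(x)$ in the (measurable) orthonormal eigenbasis of $Q(x)$ and proves the norm equivalence $\frac{1}{n}\sum_j\|\tilde f_j\|_{\Lpp(\lambda_j^{1/2};E)}\leq\|\vf\|_{\LQpp(E)}\leq\sum_j\|\tilde f_j\|_{\Lpp(\lambda_j^{1/2};E)}$, which makes $\LQpp(E)$ isomorphic --- via a \emph{bijection} --- to the product $\prod_j\Lpp(\lambda_j^{1/2};E)$; completeness, separability, and reflexivity then come for free from Theorem~\ref{weightedLpp-BSR}, with no closedness argument required because the map is onto. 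You instead use the map $T\vf=\sqrt{Q}\vf$, which gives a genuine isometry into the unweighted space $(\Lpp(E))^n$, and then do the extra work of showing the image is closed via a.e.\ subsequence extraction together with a measurable Moore--Penrose pseudoinverse. Your route trades the paper's reliance on measurable diagonalization and the weighted scalar spaces $\Lpp(\lambda_j^{1/2};E)$ for a closedness argument plus the measurability of the pseudoinverse (which indeed holds, e.g.\ via $A^+=\lim_{\ep\to 0^+}(A^TA+\ep I)^{-1}A^T$); the paper's route trades those for the norm equivalence \eqref{ineq:1} with dimensional constants. Both are sound. One small point to make explicit: the a.e.\ subsequence extraction from norm convergence in $\Lpp(E)$ should be justified via the embedding $\Lpp(E)\hookrightarrow L^1(E)$, which holds here because $E$ is bounded (so $\|1\|_{\Lcpp(E)}<\infty$ by \cite[Corollary~2.48]{DCUVLS}), after which one invokes the classical $L^1$ fact; this works even without assuming $p_+<\infty$, as is needed for the completeness claim. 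Also note that your ``cleaner alternative'' at the end, which mimics~\cite{CURE-2017}, is essentially the paper's eigendecomposition route rather than a third independent argument.
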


The proof of Theorem \ref{LQsepcomplete} requires some basic facts from
linear algebra, as well as some results about matrix functions.  If
$x = (x_1, \ldots, x_n) \in \R^n$ and $1 \leq r \leq \infty$, we
recall the $\ell^r$ norms on $\R^n$:
\[|x|_r = \left( \sum_{j = 1}^n |x_j|^r \right)^{1/r} \textrm{ and }
  |x|_{\infty} = \sup_{1 \leq j \leq n} |x_j|.\]
When $r = 2$, $|x|_r$
is the Euclidean norm and we denote it by $|\cdot|_2 =
|\cdot|$. Recall that in finite dimensions, all norms are
equivalent. In particular, we have that for all
$x\in\R^n$,
\begin{equation}\label{fin-norm:equiv}
  |x|_2 \leq |x|_1 \leq \sqrt{n} |x|_2, \quad
  |x|_\infty  \leq |x|_2 \leq \sqrt{n} |x|_\infty, \quad
  |x|_\infty  \leq |x|_1 \leq n |x|_\infty.
\end{equation}

We say that an $n\times n$ matrix function $Q(\cdot)$ is positive
semi-definite on $E$ if for every nonzero $\xi \in \R^n$,
$ \xi^TQ(x)\xi\geq 0$ for almost every $x\in \Omega$. We say $Q$
is self-adjoint if $q_{ij}=q_{ji}$ for $1 \leq i,j \leq n$. Recall
that every finite, self-adjoint matrix is diagonalizable;  for matrix
functions this can be done measurably.
\begin{lemma}{ \cite[Lemma 2.3.5]{MR1350650}}\label{diagonalize}
  Let $Q$ be a finite, self-adjoint matrix whose entries are Lebesgue
  measurable functions on some domain $E$. Then for every $x \in E$,
  $Q(x)$ is diagonalizable, i.e. there exists a matrix $U$ whose
  entries are Lebesgue measurable functions on $E$ such that $U^T Q U$
  is a diagonal matrix and $U(x)$ is orthogonal for every $x \in E$.
\end{lemma}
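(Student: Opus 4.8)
The plan is to prove the lemma by induction on the dimension $n$, at each step splitting off the largest eigenvalue of $Q(\cdot)$. The base case $n=1$ is trivial, and the induction needs two ingredients: that the ordered eigenvalues of $Q(\cdot)$ are measurable functions on $E$, and that one can measurably select a single unit eigenvector for the top eigenvalue, after which the problem reduces to an $(n-1)\times(n-1)$ one. For the first ingredient, I would note that measurability of the entries $q_{ij}$ means $x\mapsto Q(x)$ is measurable as a map into the finite-dimensional space of real symmetric matrices; since $Q(x)$ is self-adjoint its eigenvalues are real, and the assignment of the decreasingly ordered eigenvalue vector $(\lambda_1(x)\ge\cdots\ge\lambda_n(x))$ is a \emph{continuous} map on symmetric matrices --- e.g. by Weyl's perturbation bound $|\lambda_k(A)-\lambda_k(B)|\le|A-B|_{\op}$, or via Courant--Fischer. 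Composing, each $\lambda_k(\cdot)$ is measurable on $E$.

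The heart of the argument is the measurable choice of an eigenvector for $\lambda_1$. Consider $A(x):=Q(x)-\lambda_1(x)I$, a measurable symmetric matrix function with $\ker A(x)\neq\{0\}$ for every $x$. I would first partition $E$ according to $r(x):=\operatorname{rank}A(x)\in\{0,1,\dots,n-1\}$; each piece is measurable since $\{\operatorname{rank}A\ge j\}$ is a finite union of sets on which a fixed $j\times j$ minor is nonzero. On the (possibly empty) piece where $r=0$ set $u_1:=e_1$. On a piece where $r\equiv r_0\ge 1$, partition further by the lexicographically first pair $(S,T)$ of $r_0$-element index sets for which the submatrix $A_{S,T}(x)$ is invertible; on each such measurable piece, fix a column index $t_0\notin T$ and solve $A_{S,T}(x)\,y_T=-A_{S,t_0}(x)$ by Cramer's rule. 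This produces a measurable vector $y(x)$ with $y_{t_0}=1$ and $y_j=0$ off $T\cup\{t_0\}$; since the $r_0$ rows of $A(x)$ indexed by $S$ are independent and $r_0=\operatorname{rank}A(x)$, they span the row space, so $y(x)\in\ker A(x)\setminus\{0\}$. Normalizing gives a measurable unit eigenvector $u_1(x)$ of $Q(x)$ for $\lambda_1(x)$, defined on all of $E$ after gluing the finitely many pieces.

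Finally I would complete $u_1$ to a measurable orthonormal frame: apply Gram--Schmidt to $u_1(x),e_1,\dots,e_n$ and discard the single vector that vanishes, splitting $E$ into the finitely many measurable sets according to which index is discarded (on each, Gram--Schmidt is algebraic with nonvanishing denominators, hence measurable). This yields a measurable $V_0(x)$ whose columns are $u_1(x)$ followed by an orthonormal basis of $u_1(x)^\perp$, with $V_0(x)$ orthogonal for every $x$. Since $Qu_1=\lambda_1 u_1$ and $Q^T=Q$, one gets $V_0^TQV_0=\operatorname{diag}(\lambda_1,\tilde Q)$ with $\tilde Q(x)$ an $(n-1)\times(n-1)$ symmetric matrix function with measurable entries; applying the inductive hypothesis to $\tilde Q$ (on each cell of a finite measurable partition of $E$, then gluing) and conjugating back by $V_0$ gives the desired measurable orthogonal $U$ with $U^TQU$ diagonal. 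The main obstacle is the eigenvector selection in the middle paragraph: a continuous choice of eigenvectors fails exactly where eigenvalue multiplicities jump, and the device that circumvents this is the finite measurable partition of $E$ --- first by $\operatorname{rank}(Q-\lambda_1 I)$, then by a witnessing pair of index sets --- on whose cells Cramer's rule gives explicit measurable formulas. (Alternatively, one could invoke a general measurable selection theorem, e.g. Kuratowski--Ryll-Nardzewski applied to the closed-graph multifunction $x\mapsto\{v:|v|=1,\ Q(x)v=\lambda_1(x)v\}$, but the hands-on partition keeps the argument self-contained.)
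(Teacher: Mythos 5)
The paper does not prove this lemma at all: it is quoted verbatim from Ron--Shen \cite[Lemma 2.3.5]{MR1350650} and used as a black box, so there is no in-paper argument to compare yours against. Judged on its own, your proof is correct and self-contained. The two ingredients are both sound: measurability of the ordered eigenvalues follows from Weyl's bound $|\lambda_k(A)-\lambda_k(B)|\le \|A-B\|_{\op}$ composed with the measurable map $x\mapsto Q(x)$; and the eigenvector selection works because on each cell of the finite measurable partition (first by $\operatorname{rank}(Q-\lambda_1 I)$, then by the lexicographically first invertible $r_0\times r_0$ submatrix) the vector $y$ produced by Cramer's rule genuinely lies in $\ker(Q-\lambda_1 I)$ --- the rows indexed by $S$ are independent because they contain an invertible $r_0\times r_0$ block, hence span the row space, so annihilation by those rows forces $Ay=0$ --- and is nonzero since $y_{t_0}=1$. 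The Gram--Schmidt completion discards exactly one vector from the spanning list of $n+1$ vectors, and the index of the discarded vector is determined by measurable (Gram-determinant) conditions, so the frame $V_0$ is measurable and orthogonal pointwise, the block reduction $V_0^TQV_0=\operatorname{diag}(\lambda_1,\tilde Q)$ is forced by symmetry, and the induction closes. This finite-partition-plus-explicit-algebra device is the standard way to prove measurable diagonalization and is in the same spirit as the cited source; the only stylistic alternative is the abstract measurable selection theorem you mention, which you rightly avoid in favor of a hands-on argument.
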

Equivalently, there is a measurable diagonal matrix function $D(x)$
(whose entries are the non-negative eigenvalues of $Q(x)$) and an
orthogonal matrix function $U(x)$ such that for almost every $x \in E$
	\[ Q(x) = U^T(x) D(x) U(x) \] 
In particular, given such a matrix $Q$, we define its square root by
	\[ \sqrt{Q(x)} = U^T(x) \sqrt{D(x)} U(x),  \]
where $\sqrt{D(x)}$ takes the square root of each entry of $D(x)$ along the diagonal. 

\begin{remark}\label{evector-mble} As mentioned in {\cite[Remark
    5]{MR2574880}} and as a consequence of the proof of  Lemma \ref{diagonalize},
  the eigenvalues $\{\lambda_j(x)\}_{j=1}^n$ and eigenvectors
  $\{\bv_j(x)\}_{j=1}^n$ associated to a self-adjoint, positive
  semi-definite measurable matrix function, $Q: E \rightarrow \calS_n$
  are also measuarable functions on
  $E$.
\end{remark}

\begin{proof}[Proof of Theorem \ref{LQsepcomplete}] Since $Q(x)$ 
self-adjoint, by Lemma \ref{diagonalize}, $Q(x)$
  is diagonalizable. By Remark \ref{evector-mble},  let
  $\lambda_1(x), \ldots, \lambda_n(x)$ be the measurable eigenvalues of
  $Q(x)$ and let 
  $\bv_1(x), \ldots, \bv_n(x)$ be measurable eigenvectors with $|\bv_j(x)|=1$ for almost every
  $x \in E$, $1 \leq j \leq n$. Hence, $\{\bv_j(x)\}_{j=1}^n$
  forms a basis for $\R^n$ for almost every $x \in E$. Fix
  $\vf \in \LQpp(E)$; then we can write $\mathbf{f}$ as
\begin{equation}\label{vf:decomp}
\mathbf{f}(x) = \sum_{j=1}^n \Tilde{f}_j(x) \bv_j(x),
\end{equation}
where $\Tilde{f}_j=\mathbf{f}^T \bv_j$ is the $j$th component of
$\mathbf{f}$ with respect to the basis
$\{\bv_j\}_{j=1}^n$. Completeness, separability and reflexivity are a
consequence of the following equivalence of norms: for all
$\mathbf{f} \in \LQpp(E)$,
\begin{equation}
    \frac{1}{n}\sum_{j=1}^n \|\Tilde{f}_j\|_{\Lpp(\lambda_j^{1/2};E)} \leq \|\mathbf{f}\|_{\LQpp(E)} \leq \sum_{j=1}^n \|\Tilde{f}_j\|_{\Lpp(\lambda_j^{1/2};E)}. \label{ineq:1}
\end{equation}

Suppose for the moment that \eqref{ineq:1} holds. To show that
$\LQpp(E)$ is complete, let $\{\mathbf{f}_k\}_{k=1}^\infty$ be a
Cauchy sequence in $\LQpp(E)$. Let $\epsilon >0$ and choose $N \in \N$
such that for every $l,m >N$,
$\|\mathbf{f}_l - \mathbf{f}_m\|_{\LQpp(E)} < \epsilon/n$. Inequality
\eqref{ineq:1} then shows for $l,m>N$,
\[\sum_{j=1}^n \|(\mathbf{f}_l-\mathbf{f}_m)^T \bv_j \|_{\Lpp(\lambda_j^{1/2};E)} \leq n\|\mathbf{f}_l-\mathbf{f}_m\|_{\LQpp(E)} < \epsilon.\]
Thus, for  $1\leq j \leq n$, $\{\mathbf{f}_k^T \bv_j\}_{k=1}^\infty$ is Cauchy in $\Lpp(\lambda_j^{1/2};E)$. By Theorem~\ref{weightedLpp-BSR}, $\Lpp(\lambda_j^{1/2};E)$ is complete. Thus, there exists
 $\Tilde{g}_j \in \Lpp(\lambda_j^{1/2};E)$ such that, as $k\rightarrow \infty$,
 \begin{equation} \label{eqn:wtd-conv}
   \|\mathbf{f}_k^T \bv_j -\Tilde{g}_j\|_{\Lpp(\lambda^{1/2};E)} \to
   0.
 \end{equation}
Define $\mathbf{g}: E \rightarrow \R^n$ by setting $\mathbf{g}(x) =
\sum_{j=1}^n \Tilde{g}_j(x)\bv_j(x)$.  Since $\Tilde{g}_j \in
\Lpp(\lambda_j^{1/2};E)$, $1\leq j \leq n$, by~\eqref{ineq:1},
$\mathbf{g} \in \LQpp(E)$.  Furthermore, we have that
\[ 
\|\mathbf{f}_k - \mathbf{g}\|_{\LQpp(E)} \leq \sum_{j=1}^n
\|\mathbf{f}_k^T \bv_j - \Tilde{g}_j\|_{\Lpp(\lambda_j^{1/2};E)}.
\]
If we combine this with \eqref{eqn:wtd-conv}, we get that
$\mathbf{f}_k \to \mathbf{g}$ in $\LQpp(E)$. Therefore, $\LQpp(E)$ is
complete.

Similarly, \eqref{ineq:1} implies $\LQpp(E)$ is separable when 
$p_+<\infty$.  Fix $\epsilon >0$. Since $\lambda_j^{1/2} \leq \gamma^{1/2} \in L^\pp(E)$, again by Theorem~\ref{weightedLpp-BSR}, $\Lpp(\lambda_j^{1/2};E)$ is separable, and so
for each $j$ there is a countable, dense subset
$D_j \subseteq \Lpp(\lambda_j^{1/2};E)$. Thus, for each
$j = 1, \ldots, n$, there exists $d_j \in D_j$ such that
\[\|\Tilde{f}_j- d_j \|_{\Lpp(\lambda_j^{1/2};E)} < \frac{\epsilon}{n}.\]

Define $\mathbf{d} \in D_1\times\cdots\times D_n$ by setting $\mathbf{d} = \sum_{j=1}^n d_j \bv_j$. Then by~\eqref{ineq:1}, 
\[ \| \vf - \mathbf{d}\|_{\LQpp(E)}  \leq \sum_{j=1}^n \|\Tilde{f}_j - d_j\|_{\Lpp(\lambda_j^{1/2};E)}  < \epsilon.\]
Thus, $D_1\times\cdots\times D_n$ is a countable dense subset of
$\LQpp(E)$, and so $\LQpp(E)$ is separable.

Finally, \eqref{ineq:1} implies $\LQpp(E)$ is reflexive when
$1<p_-\leq p_+ <\infty$. Equation \eqref{vf:decomp}
induces the map
\[ T:\LQpp(E) \rightarrow \displaystyle\prod_{j=1}^n
  \Lpp(\lambda_j^{1/2};E), \]
defined by
$T(\vf) = (\Tilde{f}_1, \ldots, \Tilde{f}_j)$. Clearly, $T$ is
linear.  $T$ is also 
bijective because of the norm equivalence \eqref{ineq:1}. 

Finally,  $T$ is continuous:  by the norm equivalence \eqref{ineq:1}
we have that
\[\|T(\vf_k) - T(\vf)\|_{\prod_j \Lpp(\lambda_j^{1/2};E)}
  = \sum_{j=1}^n \|\Tilde{f}_{jk} -
  \Tilde{f}_j\|_{\Lpp(\lambda_j^{1/2});E}
  \leq n\|\vf_k - \vf\|_{\LQpp(E)}.\]
In the same way we have that $T^{-1}$ is continuous since
\[ \|\vf_k-\vf\|_{\LQpp(E)}\leq \sum_{j=1}^n\|\Tilde{f}_{jk} -
\Tilde{f}_j\|_{\Lpp(\lambda_j^{1/2};E)}. \]
Therefore, $\LQpp(E)$ is isomorphic to the product space
$\displaystyle\prod_{j=1}^n \Lpp(\lambda_j^{1/2};E)$.  Finite  products of
reflexive spaces are reflexive; hence,  $\LQpp(E)$ is a reflexive
Banach space.

\medskip

To complete the proof we need to prove inequality \eqref{ineq:1}. Since
$|\sqrt{Q(x)}\xi|^2 = \xi^TQ(x)\xi$ for any $\xi\in\mathbb{R}^n$ and
almost every $x \in E$,  we have that 
\begin{multline*}
  |\sqrt{Q(x)} \mathbf{f}(x)|^2
  = \sum_{j=1}^n |\Tilde{f}_j(x)\sqrt{Q(x)}\bv_j(x)|^2 \\
      =\sum_{j=1}^n |\Tilde{f}_j(x)|^2 \bv_j^T(x) Q(x) \bv_j(x)
 = \sum_{j=1}^n |\Tilde{f}_j(x)|^2 \lambda_j(x).
\end{multline*}
Hence,
\begin{equation*}|
  \sqrt{Q(x)}\mathbf{f}(x)|
  = \left(\sum_{j=1}^n |\Tilde{f}_j(x)|^2 \lambda_j(x)\right)^{1/2}
\end{equation*}
almost everywhere in $E$.

Inequality~\eqref{ineq:1} is now
straightforward to prove.  Define 
$\Tilde{\bf F}: E \rightarrow \R^n$ by 
$$\Tilde{\bf F}(x) = (|\Tilde{f}_1(x)|\lambda_1^{1/2}(x), \ldots, |\Tilde{f}_n(x)|\lambda_n^{1/2}(x)).$$
By~\eqref{vf:decomp} we have that
\begin{equation*}
  \|\mathbf{f}\|_{\LQpp(E)}
  = \| |\sqrt{Q(x)}\mathbf{f}(x)|\|_{L^\pp(E)} = \| |\Tilde{\bf F}(x)|\|_{L^\pp(E)}. 
\end{equation*}
By \eqref{fin-norm:equiv} and the triangle inequality,
\[\|| \Tilde{\bf F}(x)|\|_{L^\pp(E) }
  \leq \| |\Tilde{\bf F}(x)|_1 \|_{L^\pp(E) }
  \leq \sum_{j=1}^n \| |\Tilde{ f}_j \lambda_j^{1/2}|\|_{L^\pp(E)}
  = \sum_{j=1}^n \|\Tilde{f}_j\|_{\Lpp(\lambda_j^{1/2};E)}.\]
To show the reverse inequality, we again use~\eqref{fin-norm:equiv} and
the definition of $|\cdot|_\infty$ to get
\begin{multline*}
  \| | \Tilde{\bf F}(x)| \|_{L^\pp(E) }
  \geq \frac{1}{n} \sum_{j=1}^n \| | \Tilde{\bf F}(x)|_\infty
  \|_{L^\pp(E)} \\
  \geq \frac{1}{n} \sum_{j=1}^\infty \|
  |\Tilde{f}_j(x)\lambda_j^{1/2}(x)|\|_{L^\pp(E)}
  =\frac{1}{n} \sum_{j=1}^n \|\Tilde{f}_j\|_{\Lpp(\lambda_j^{1/2};E)}.
\end{multline*}
This completes the proof of \eqref{ineq:1}.
\end{proof}

\medskip

We now use these variable exponent spaces to define the degenerate
Sobolev spaces where solutions in Definition~\ref{N-prop} will live.
Initially, we will give them as collections of equivalence classes of
Cauchy sequences of $C^1(\overline{E})$ functions.

\begin{definition}\label{defsobsr}
  Given $\pp \in \Pp(E)$ a weight $v$ and a matrix $Q$, suppose
  $ v, \, \gamma^{1/2} \in \Lpp(E).$ Define the Sobolev space
  $\HQpp(v;E)$ to be the abstract completion of $C^1(\overline{E})$ with
  respect to the norm
  \begin{equation}\label{sobnormsr}
    \|f\|_{\HQpp(v;E)} = \|f\|_{\Lpp(v;E)} + \|\nabla f\|_{\LQpp(E)}.
  \end{equation}
\end{definition}

\begin{remark}
  With our hypotheses on $v$ and $\gamma$ this
definition makes sense, since they guarantee that for any $f\in
C^1(\overline{E})$ the right-hand side of~\eqref{sobnormsr} is finite.
\end{remark}

While this space is defined abstractly, we can give a concrete
representation of each equivalence class in it.  Since we assume
$ v, \, \gamma^{1/2} \in \Lpp(E)$, by Theorems~\ref{Lpp-BSR}
and~\ref{LQsepcomplete}, the spaces $\Lpp(v;E)$ and $\LQpp(E)$ are
complete.  Therefore, if $\{u_n\}_n$ is a sequence of $C^1(\overline{E})$
functions that is Cauchy with respect to the norm in~\eqref{sobnormsr}, we have that this
sequence is Cauchy in $\Lpp(v;E)$ and $\LQpp(E)$ and so converges to a
unique pair of functions $(u,\bg) \in \Lpp(v;E) \times \LQpp(E)$.
We stress that while the function $\bg$ plays the role of $\nabla u$, it cannot in
general be identified with a weak derivative of $u$ in the classical
sense, even in the constant exponent case.  For additional details,
see~\cite{CURE-2017}.  

\begin{theorem}\label{oversob}
  Let $\pp \in \Pp(E)$ and suppose $v,\,\gamma^{1/2}\in \Lpp(E)$. Then
  $\HQpp(v;E)$ is a Banach space.  If $p_+ < \infty$, then it
  is separable, and if $1 < p_- \leq p_+ < \infty$, it is reflexive.
\end{theorem}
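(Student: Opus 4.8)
The plan is to identify $\HQpp(v;E)$ with a closed subspace of the product Banach space $X = \Lpp(v;E) \times \LQpp(E)$, equipped with the norm $\|(u,\bg)\|_X = \|u\|_{\Lpp(v;E)} + \|\bg\|_{\LQpp(E)}$, and then read off all three conclusions from the corresponding properties of $X$, which are already available from Theorems~\ref{weightedLpp-BSR} and~\ref{LQsepcomplete}. That $\HQpp(v;E)$ is a Banach space needs no argument beyond its definition: the abstract completion of a (semi)normed space is complete by construction.

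For the identification, I would define $J : \HQpp(v;E) \to X$ exactly as in the paragraph preceding the theorem: send the class of a $\|\cdot\|_{\HQpp(v;E)}$-Cauchy sequence $\{u_k\} \subset C^1(\overline E)$ to the pair $(u, \bg)$, where $u_k \to u$ in $\Lpp(v;E)$ and $\nabla u_k \to \bg$ in $\LQpp(E)$. The points to check are that these limits exist (they do, since $\{u_k\}$ is Cauchy in each factor and, by Theorems~\ref{weightedLpp-BSR} and~\ref{LQsepcomplete}, both factors are complete) and that the pair does not depend on the chosen representative (if $\|u_k - u_k'\|_{\HQpp(v;E)} \to 0$ then the two sequences have the same limit in each factor). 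Then $J$ is visibly linear, and it is an isometry because $\|u_k\|_{\HQpp(v;E)} = \|(u_k, \nabla u_k)\|_X$ for every $k$ and both sides pass to the limit. Being the isometric image of a complete space, $J(\HQpp(v;E))$ is complete, hence a closed subspace of $X$.

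With this in hand the rest is immediate. If $p_+ < \infty$, Theorems~\ref{weightedLpp-BSR} and~\ref{LQsepcomplete} give that $\Lpp(v;E)$ and $\LQpp(E)$ are separable; a finite product of separable metric spaces is separable (take products of countable dense subsets of the factors), and any subset of a separable metric space is separable, so $J(\HQpp(v;E)) \cong \HQpp(v;E)$ is separable. If $1 < p_- \leq p_+ < \infty$, the same theorems give that the two factors are reflexive, hence so is the finite product $X$; since a closed subspace of a reflexive Banach space is reflexive, $J(\HQpp(v;E))$, and therefore $\HQpp(v;E)$, is reflexive.

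I do not expect a genuine obstacle: the whole content is the routine verification that $J$ is a well-defined linear isometry, which is essentially the remark already made in the text that a $\HQpp(v;E)$-Cauchy sequence of $C^1$ functions is simultaneously Cauchy in $\Lpp(v;E)$ and in $\LQpp(E)$ and converges there to a unique pair. The only mild subtlety worth noting is that $J$ need not be surjective — its image is in general a proper closed subspace, since $\bg$ is not literally a weak gradient of $u$ — but surjectivity is not needed, as "closed subspace" already suffices for both separability and reflexivity to descend.
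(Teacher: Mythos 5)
Your proposal is correct and follows essentially the same route as the paper: embed $\HQpp(v;E)$ isometrically into the product $\Lpp(v;E)\times\LQpp(E)$ and read off completeness, separability, and reflexivity from the fact that these properties pass to closed subspaces. The one cosmetic difference is in how closedness of the image is established: the paper verifies it directly, by taking a limit point of the image and constructing a $C^1(\overline E)$ sequence converging to it via a diagonalization argument, whereas you note more simply that the isometric image of a complete space is complete, hence closed. Your version is slightly cleaner and avoids the diagonal extraction, but it is the same argument in substance.
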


\begin{proof}
  Recall that a closed subspace of a separable, reflexive Banach space
  is also a separable, reflexive Banach space. Hence, by
  Theorems~\ref{Lpp-BSR} and~\ref{LQsepcomplete}, it suffices to show
  that $\HQpp(v;E)$ is isometrically isomorphic to a closed subspace
  of $\Lpp(v;E)\times \LQpp(E)$.  Given a sequence $\{u_n\}_{n}$ of
  $C^1(\overline{E})$ functions that is Cauchy with respect to
  \eqref{sobnormsr}, denote its associated equivalence class in
  $\HQpp(E)$ by $[\{u_n\}_n]$.   Then we have that
  \[ \|[\{u_n\}_n]\|_{\HQpp(E)}
    = \lim_{n\rightarrow \infty}
    \big( \|u_n\|_{\Lpp(v;E)}+\|\nabla u_n \|_{\LQpp(E)}\big)
    = \|u\|_{\Lpp(v;E)}+\|\bg\|_{\LQpp(E)}, \]
  where the pair $(u,\bg) \in \Lpp(v;E)\times \LQpp(E)$ is the unique
  limit described above. (Note that this limit does not depend on the
  representative chosen from the equivalence class.)

  The existence of this pair lets us define a natural map
  \[ I: \HQpp(v;E) \rightarrow \Lpp(v;E)\times \LQpp(E) \]
  by $I([\{u_n\}_n]) = (u,\bg)$. 
Clearly,  $I$ is linear and an isometry by
construction.
Finally,  if $(u,\bg)$ is a
limit point of the image, then by a diagonalization argument we can
construct a sequence $\{u_n\}_n$ in $C^1(\overline{E})$ that converges
to it in the product norm.  But then the sequence is Cauchy in
$\HQpp(v;E)$ norm, and so $(u,\bg)$ is contained in the image of
$\HQpp(v;E)$.  Thus, $\HQpp(v;E)$ is isometrically isomorphic to a
closed subspace of  $\Lpp(v;E)\times \LQpp(E)$ and our proof is complete.
\end{proof}

It is well known that when considering Neumann boundary
value problems, any solution is unique only up to addition of
constants.  In other words if $u$ were a solution of the Neumann
problem $\eqref{nprob}$, then we should have that $u+c$ is also a
solution for any constant $c$.  Therefore, in defining weak solutions
we will restrict our attention to
the ``mean-zero" subspace of $\HQpp(v;E)$.  

\begin{definition} Given the space $\HQpp(v;E)$ of
  Definition~\ref{defsobsr}, we define
\[\TildeHQpp(v;E) = \left\{ (u,\bg)\in \HQpp(v;E) : \int_E u(x) v(x) dx = 0\right\}\]
\end{definition}

For our analysis we will need to prove that $\TildeHQpp(v;E)$ inherits
the properties of its parent space from Theorem~\ref{oversob}.

\begin{theorem}\label{TildeHQpp-complete}
  Given $\pp \in \Pp(E)$, suppose $v,\,\gamma^{1/2}\in \Lpp(E)$.  Then
  $\TildeHQpp(v;E)$ is a Banach space. Furthermore, $\TildeHQpp(v;E)$
  is separable if $p_+ < \infty$, and is reflexive if
  $1 < p_- \leq p_+ < \infty$.
\end{theorem}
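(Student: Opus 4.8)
The plan is to show that $\TildeHQpp(v;E)$ is a closed subspace of $\HQpp(v;E)$, and then invoke Theorem~\ref{oversob} together with the standard fact that closed subspaces of separable (resp. reflexive) Banach spaces are again separable (resp. reflexive). Since $\TildeHQpp(v;E)$ is by definition a linear subspace of $\HQpp(v;E)$, the only thing requiring an argument is closedness: if $(u_k,\bg_k) \in \TildeHQpp(v;E)$ and $(u_k,\bg_k) \to (u,\bg)$ in the $\HQpp(v;E)$ norm, we must show $\int_E u v\,dx = 0$.

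The key step is therefore to verify that the linear functional $(u,\bg) \mapsto \int_E u(x) v(x)\,dx$ is continuous on $\HQpp(v;E)$. First I would note that convergence in $\HQpp(v;E)$ norm implies $u_k \to u$ in $\Lpp(v;E)$, i.e. $\|(u_k - u)v\|_{\Lpp(E)} \to 0$. Then I would estimate
\[
\left| \int_E u_k v\,dx - \int_E u v\,dx \right|
= \left| \int_E (u_k - u) v \cdot 1 \,dx \right|
\leq K_\pp \,\|(u_k-u)v\|_{\Lpp(E)} \,\|1\|_{\Lcpp(E)},
\]
using H\"older's inequality (Theorem~\ref{Holder}). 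Since $E$ is bounded, the constant function $1$ lies in $\Lcpp(E)$ with $\|1\|_{\Lcpp(E)} < \infty$ (it is dominated by $\max(|E|, 1)$ via the modular estimate, using $p'_+ \le \infty$, but more simply one checks $\rho_{\cpp,E}(1/\mu) \le 1$ for $\mu$ large since $|E|<\infty$). Hence the right-hand side tends to $0$, so $\int_E u_k v\,dx \to \int_E u v\,dx$. Since each $\int_E u_k v\,dx = 0$, the limit satisfies $\int_E u v\,dx = 0$, so $(u,\bg) \in \TildeHQpp(v;E)$ and the subspace is closed.

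With closedness established, the conclusion is immediate: $\HQpp(v;E)$ is a Banach space, separable when $p_+ < \infty$, and reflexive when $1 < p_- \le p_+ < \infty$ by Theorem~\ref{oversob}; a norm-closed subspace of a Banach space is complete, a subspace of a separable metric space is separable, and a closed subspace of a reflexive Banach space is reflexive. Applying these three facts to $\TildeHQpp(v;E) \subseteq \HQpp(v;E)$ gives all three assertions. I do not anticipate a serious obstacle here; the only mild technical point is confirming $1 \in \Lcpp(E)$, which uses only that $E$ is bounded (and that $p_- > 1$ is not even needed for this, though it is needed elsewhere), and the harmless subtlety that elements of $\HQpp(v;E)$ are really equivalence classes $(u,\bg)$ where $u \in \Lpp(v;E)$ is a genuine function — so the integral $\int_E uv\,dx$ is well-defined and independent of the representing Cauchy sequence, as already observed in the discussion preceding Theorem~\ref{oversob}.
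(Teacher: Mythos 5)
Your proposal is correct and follows essentially the same route as the paper: show $\TildeHQpp(v;E)$ is closed in $\HQpp(v;E)$ by using H\"older's inequality with $\|1\|_{\Lcpp(E)} < \infty$ (since $E$ is bounded) to pass the mean-zero condition to the limit, then invoke Theorem~\ref{oversob} and the standard facts about closed subspaces. Your reframing in terms of the continuity of the functional $(u,\bg)\mapsto\int_E uv\,dx$ is just a slight repackaging of the same estimate.
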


\begin{proof}
  To
  show that $\TildeHQpp(v;E)$ is a Banach space, it will suffice to show
  that $\TildeHQpp(v;E)$ is a closed subspace of the Banach space
  $\HQpp(v;E)$. Let
  $\{(u_j, \bg_j)\}_{j=1}^\infty$ be a Cauchy sequence in
  $\TildeHQpp(v;E)$. Since $\HQpp(v;E)$ is complete, there is an
  element $(u,\bg)\in \HQpp(v;E)$ such that $u_j \to u$ in $\Lpp(v;E)$
  and $\bg_j \to \bg$ in $\LQpp(E)$. Since $u_j \in \TildeHQpp(v;E)$
  for each $j$, we have that $\int_E u_j(x) v(x) dx = 0$. Thus, by
  H\"older's inequality (Theorem~\ref{Holder}) we get
\begin{multline*}
  \left|\int_E u(x) v(x) dx \right|
  =\left| \int_E (u(x) - u_j(x))v(x)dx\right| \\
    \leq \int_E |u(x) - u_j(x)|v(x) dx 
     \leq K_\pp \|(u-u_j)v\|_{\Lpp(E)}\|1\|_{\Lcpp(E)}.  
   \end{multline*}
   
Since $E$ is bounded, $\|1\|_{\Lcpp(E)} < \infty$. This follows at
once from~\cite[Corollary~2.48]{DCUVLS}.
Since $u_j \to u$ in $\Lpp(v;E)$,  it follows that the right-hand side
converges to $0$.  Hence,
$$\int_E u(x)v(x)dx = 0$$
and so $(u,\bg) \in \TildeHQpp(v;E)$ and we have that
$\TildeHQpp(v;E)$ is a closed subspace of $\HQpp(v;E)$.

If $p_+<\infty$, then $\HQpp(v;E)$ is separable, and so every closed
subspace, in particular  $\TildeHQpp(v;E)$, is also separable.  
Finally, if $1<p_-\leq p_+<\infty$, then $\HQpp(v;E)$ is reflexive,
and since  every closed subspace of a reflexive Banach space is
reflexive, $\TildeHQpp(v;E)$ is as well.
\end{proof}

As part of the proof of Theorem \ref{main-result}, we will need to
apply the Poincar\'e inequality to any element of $\TildeHQpp(v;E)$
and not just to $C^1(\overline{E})$ functions.  To prove we can do
this, we need the following lemma.

\begin{lemma}\label{density}
  Given $\pp \in \Pp(E)$ with $p_+ < \infty$, suppose
  $v,\,\gamma^{1/2}\in \Lpp(E)$. Then the set
  $C^1(\overline{E}) \cap \TildeHQpp(v;E)$ is dense in
  $\TildeHQpp(v;E)$.
\end{lemma}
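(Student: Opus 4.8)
The plan is to take an arbitrary element $(u,\bg) \in \TildeHQpp(v;E)$ and approximate it in $\HQpp(v;E)$-norm by $C^1(\overline{E})$ functions that \emph{also} satisfy the mean-zero condition $\int_E h\,v\,dx = 0$. By definition of $\HQpp(v;E)$ as the abstract completion of $C^1(\overline{E})$, there is a sequence $\{w_n\}_n \subset C^1(\overline{E})$ with $w_n \to u$ in $\Lpp(v;E)$ and $\nabla w_n \to \bg$ in $\LQpp(E)$. The only obstruction is that each $w_n$ need not have weighted mean zero, so the natural fix is to subtract off the weighted average: set
\[
h_n = w_n - (w_n)_{E,v} = w_n - \frac{1}{v(E)}\int_E w_n\,v\,dx.
\]
Since constants lie in $C^1(\overline{E})$, each $h_n \in C^1(\overline{E})$, and by construction $\int_E h_n\,v\,dx = 0$, so $h_n \in C^1(\overline{E}) \cap \TildeHQpp(v;E)$ (identifying $h_n$ with the constant sequence $[\{h_n\}_n]$, which has gradient $\nabla h_n = \nabla w_n$).

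Next I would verify convergence $h_n \to (u,\bg)$ in the $\HQpp(v;E)$-norm. The gradient part is immediate: $\nabla h_n = \nabla w_n \to \bg$ in $\LQpp(E)$. For the function part, write
\[
\|h_n - u\|_{\Lpp(v;E)} \leq \|w_n - u\|_{\Lpp(v;E)} + |(w_n)_{E,v}| \cdot \|1\|_{\Lpp(v;E)},
\]
so it suffices to show $(w_n)_{E,v} \to 0$. Since $\int_E u\,v\,dx = 0$ (as $(u,\bg) \in \TildeHQpp(v;E)$), we have
\[
|(w_n)_{E,v}| = \frac{1}{v(E)}\left| \int_E (w_n - u)\,v\,dx\right| \leq \frac{K_\pp}{v(E)}\,\|w_n - u\|_{\Lpp(v;E)}\,\|1\|_{\Lcpp(E)},
\]
using H\"older's inequality (Theorem~\ref{Holder}), exactly as in the proof of Theorem~\ref{TildeHQpp-complete}. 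Since $E$ is bounded, $\|1\|_{\Lcpp(E)} < \infty$ by \cite[Corollary~2.48]{DCUVLS} (and likewise $\|1\|_{\Lpp(v;E)} = \|v\|_{\Lpp(E)} < \infty$ by hypothesis), and $\|w_n - u\|_{\Lpp(v;E)} \to 0$; hence $(w_n)_{E,v} \to 0$ and therefore $h_n \to u$ in $\Lpp(v;E)$.

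I do not anticipate a serious obstacle here; the argument is essentially a one-line correction (subtracting the weighted average) together with the boundedness of $\|1\|_{\Lcpp(E)}$ and $\|1\|_{\Lpp(v;E)}$, which also appeared in the proof of Theorem~\ref{TildeHQpp-complete}. The one point that needs a word of care is the identification of a $C^1(\overline{E})$ function with an element of $\HQpp(v;E)$ via its constant Cauchy sequence, and the fact that subtracting a constant does not change the abstract gradient $\bg$; this is immediate from the construction of $\HQpp(v;E)$ as a completion. Assembling these observations shows $h_n \to (u,\bg)$ in $\TildeHQpp(v;E)$, which proves the density claim. $\blacksquare$
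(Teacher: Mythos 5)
Your proof is correct and follows essentially the same approach as the paper: approximate $(u,\bg)$ by $C^1(\overline{E})$ functions using density in $\HQpp(v;E)$, correct each approximant by subtracting its weighted average to land in $\TildeHQpp(v;E)$, and show the correction term vanishes via H\"older's inequality together with the finiteness of $\|1\|_{\Lcpp(E)}$ and $\|1\|_{\Lpp(v;E)}$. In fact your write-up is slightly cleaner: the paper's proof writes $(u_k)_E$ in a couple of places where it clearly means the weighted average $(u_k)_{E,v}$, whereas you use the weighted average consistently throughout.
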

\begin{proof}
  Fix $(u,\bg) \in \TildeHQpp(v;E)$.  Since $C^1(\overline{E})$ is
  dense in $\TildeHQpp(v;E) \subseteq \HQpp(v;E)$,  there exists
  a sequence of functions $u_k \in C^1(\overline{E})$ such that
  $(u_k, \nabla u_k) \to (u, \bg)$ in norm. Let
  $y_k = u_k - (u_k)_E \in C^1(\overline{E}) \cap \TildeHQpp(v;E)$;
  then
  $\nabla y_k = \nabla u_k$, and so to prove
  $(y_k, \nabla y_k) \to (u, \bg)$ it will suffice to show
  $u_k-y_k = (u_k)_{E,v}\to 0 $ in $\Lpp(v;E)$.  Since
  $(u, \bg) \in \TildeHQpp(v;E)$, we have
  $u_{E,v}= 0$, and so by H\"older's inequality
  (Theorem \ref{Holder})
\begin{equation*}
  (u_k)_{E,v} = \frac{1}{v(E)} \int_E (u_k - u)v\,dx
  \leq  K_\pp \|u_k - u\|_{\Lpp(v;E)}\|1\|_{\Lcpp(E)}.  
\end{equation*}

Since $E$ is bounded, $\|1\|_{\Lcpp(v;E)}<\infty$ as in the previous
proof.  Thus $(u_k)_{E,v} \rightarrow 0$.  Consequently,
\[\|(u_k)_E\|_{\Lpp(v;E)} = |(u_k)_E| \|1\|_{\Lpp(v;E)}\]
converges to zero since $1\in \Lpp(v;E)$.
\end{proof}

\begin{theorem}\label{pcforH} If Definition \ref{PC-prop} holds,
  then the Poincar\'e inequality
  \[ \|u\|_{\Lpp(v;E)} \leq C_0\| \bg \|_{\LQpp(E)}\]
holds for every pair $(u,\bg) \in \TildeHQpp(v;E)$.  
\end{theorem}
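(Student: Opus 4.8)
The plan is to combine the density result of Lemma~\ref{density} with a routine limiting argument. Fix a pair $(u,\bg) \in \TildeHQpp(v;E)$. By Lemma~\ref{density} (which is available under our standing hypothesis $p_+<\infty$, in force throughout Theorem~\ref{main-result}), there is a sequence $y_k \in C^1(\overline{E}) \cap \TildeHQpp(v;E)$ with $(y_k,\nabla y_k) \to (u,\bg)$ in the norm of $\HQpp(v;E)$; in particular $y_k \to u$ in $\Lpp(v;E)$ and $\nabla y_k \to \bg$ in $\LQpp(E)$.

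Next I would use that each $y_k$ lies in the mean-zero subspace, so $(y_k)_{E,v}=0$, and apply the Poincar\'e inequality~\eqref{PC-ineq} from Definition~\ref{PC-prop} to the $C^1$ function $y_k$:
\[
\|y_k\|_{\Lpp(v;E)} = \|y_k - (y_k)_{E,v}\|_{\Lpp(v;E)} \le C_0 \|\nabla y_k\|_{\LQpp(E)}.
\]
Then let $k \to \infty$. By the reverse triangle inequality in the Banach spaces $\Lpp(v;E)$ and $\LQpp(E)$ (Theorems~\ref{weightedLpp-BSR} and~\ref{LQsepcomplete}), $\|y_k\|_{\Lpp(v;E)} \to \|u\|_{\Lpp(v;E)}$ and $\|\nabla y_k\|_{\LQpp(E)} \to \|\bg\|_{\LQpp(E)}$, so passing to the limit in the displayed inequality gives $\|u\|_{\Lpp(v;E)} \le C_0 \|\bg\|_{\LQpp(E)}$, which is the claim.

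This argument is essentially mechanical, so there is no substantial obstacle; the only point requiring attention is that the density statement of Lemma~\ref{density} genuinely produces approximants that stay in the mean-zero subspace $\TildeHQpp(v;E)$, so that $(y_k)_{E,v}=0$ and the right-hand side of~\eqref{PC-ineq} is exactly $\|\nabla y_k\|_{\LQpp(E)}$. Given that, the continuity of the norms under the relevant convergences closes the proof with the same constant $C_0$.
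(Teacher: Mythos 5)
Your proof is correct and follows essentially the same route as the paper: invoke Lemma~\ref{density} to approximate $(u,\bg)$ by $C^1(\overline{E})\cap\TildeHQpp(v;E)$ functions, apply Definition~\ref{PC-prop} using that the weighted mean $(y_k)_{E,v}$ vanishes, and pass to the limit using norm continuity. (In fact your use of $(y_k)_{E,v}=0$ is the precise statement consistent with the definition of $\TildeHQpp(v;E)$; the paper's proof writes $(u_k)_E=0$, which is a small slip.)
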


\begin{proof}
  By Lemma \ref{density}, for every $(u,\bg) \in \TildeHQpp(v;E)$,
  there exists a sequence of functions
  $\{u_k\}_{k=1}^\infty \subseteq C^1(\overline{E})\cap
  \TildeHQpp(v;E)$ such that $\|u_k \|_{\Lpp(v;E)} \to \|u\|_{\Lpp(v;E)}$
  and $\|\nabla u_k\|_{\LQpp(v;E)} \to \|\bg\|_{\LQpp(v;E)}$ as
  $k \to \infty$. Since $u_k \in \TildeHQpp(v;E)$, 
  $(u_k)_E = 0$ for $k\in \mathbb{N}$. Hence, by  Definition
  \ref{PC-prop},
\begin{multline*}
  \|u \|_{\Lpp(v;E)}
 = \lim_{k \to \infty }\|u_k \|_{\Lpp(v;E)} \\
= \lim_{k \to \infty }\|u_k - (u_k)_E\|_{\Lpp(v;E)}
\leq C_0 \lim_{k \to \infty }\|\nabla u_k\|_{\LQpp(v;E)}
= C_0 \|\bg\|_{\LQpp(v;E)}.
\end{multline*}
\end{proof}

\medskip

Finally, we  define a weak solution to the degenerate
$\pp$-Laplacian from Definition~\ref{N-prop}.

\begin{definition} \label{weaksol}
  Let $E \subseteq \R^n$ be a bounded open set, $\pp \in \Pp(E)$, and
  $v,\,\gamma^{1/2}\in \Lpp(E)$. Given $f \in \Lpp(v;E)$, 
  the pair $(u,\bg)_f \in \TildeHQpp(v;E)$ is a weak solution to the
  Neumann problem \eqref{nprob} if for all test functions
  $\varphi \in C^1(\overline{E}) \cap \TildeHQpp(v;E)$,
\[\int_E | \sqrt{Q(x)}\bg(x)|^{p(x)-2} (\nabla \varphi(x))^T Q(x) \bg(x) dx = -\int_E |f(x)|^{p(x)-2}f(x) \varphi(x)(v(x))^{p(x)}dx.\]
\end{definition}


\section{$\pp$-Neumann Implies $\pp$-Poincar\'e}\label{section:N->PC}

In this section we will give the first half of the proof of
Theorem~\ref{main-result}.  
Fix $\pp\in \Pp(E)$, $1 < p_- \leq p_+ < \infty$, let $v$ be a weight in $\Omega$ with
$v\in \Lpp(E)$, and $Q$ a measurable matrix function with
$\gamma^{1/2}\in \Lpp(E)$.   Assume that the Definition~\ref{N-prop}
holds.  We will show that the Poincar\'e inequality in
Definition~\ref{PC-prop} holds.

We begin by showing that the regularity condition~\eqref{hypoest} in
Definition~\ref{N-prop} actually implies a stronger condition.  

\begin{lemma}\label{N->PC:p*r*}
  Let $\pp$, $v$, $Q$ be as defined above.  Then there exists a 
  constant $C =C(\pp, v, E)$ such that for any $f \in \Lpp(v;E)$ and
  any corresponding weak solution $(u,\bg)_f \in \TildeHQpp(v;E)$ of
  \eqref{nprob},
\[\|\bg\|_{\LQpp(E)}^{p_* -1} \leq C\|f\|_{\Lpp(v;E)}^{r_*-1},\]
where $p_*$ and $r_*$ are defined by~\eqref{eqn:exponents}.
\end{lemma}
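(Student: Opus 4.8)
The plan is to derive the stronger estimate $\|\bg\|_{\LQpp(E)}^{p_*-1} \le C\|f\|_{\Lpp(v;E)}^{r_*-1}$ by testing the weak formulation of \eqref{nprob} against the solution $u$ itself, and then combining the resulting identity with the regularity estimate \eqref{hypoest}. First I would take a weak solution $(u,\bg)_f$. Using the density result from Lemma~\ref{density}, I can use $u$ as a test function in Definition~\ref{weaksol} (strictly speaking, approximate $u$ by $C^1(\overline E)\cap\TildeHQpp(v;E)$ functions and pass to the limit, noting that $\nabla\vphi$ converges to $\bg$ in $\LQpp(E)$ and $\vphi\to u$ in $\Lpp(v;E)$, so both sides converge by H\"older's inequality in the variable exponent setting, Theorem~\ref{Holder}). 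This gives
\[
\int_E |\sqrt{Q}\bg|^{p(x)-2}\,\bg^T Q \bg\,dx
= -\int_E |f|^{p(x)-2} f\, u\, v^{p(x)}\,dx,
\]
and since $\bg^T Q\bg = |\sqrt{Q}\bg|^2$, the left-hand side equals $\int_E |\sqrt{Q}\bg|^{p(x)}\,dx = \rho_{\pp,E}(\sqrt{Q}\bg) = \rho_{\pp,E}(|\bg|_Q)$ where I write $|\bg|_Q = |\sqrt Q\bg|$.

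Next I would estimate the two sides. For the left-hand side, by Proposition~\ref{mod-norm:ineq} applied to $|\sqrt Q\bg| \in \Lpp(E)$, we have $\rho_{\pp,E}(\sqrt Q\bg) \ge \|\bg\|_{\LQpp(E)}^{p_*}$ exactly when $p_*$ is chosen as in \eqref{eqn:exponents} (that is, $p_* = p_+$ if $\|\bg\|_{\LQpp(E)}<1$ and $p_* = p_-$ if $\|\bg\|_{\LQpp(E)}\ge 1$ — this is precisely the ``wrong direction'' bound from Proposition~\ref{mod-norm:ineq} with the roles matching). For the right-hand side, I bound
\[
\left|\int_E |f|^{p(x)-1}\,u\,v^{p(x)}\,dx\right|
= \left|\int_E \big(|f|^{p(x)-1}v^{p(x)-1}\big)\,(uv)\,dx\right|
\le K_\pp\,\big\||fv|^{\pp-1}\big\|_{\Lcpp(E)}\,\|uv\|_{\Lpp(E)},
\]
using H\"older's inequality with the conjugate pair $(\pp,\cpp)$, where I rewrote $|f|^{p(x)-1}v^{p(x)} = (|f|v)^{p(x)-1}\cdot v$; more carefully, $|f|^{p(x)-1}v^{p(x)} = |fv|^{p(x)-1}\cdot v \cdot (\text{nothing extra})$ since $(|f|v)^{p(x)-1}v = |f|^{p(x)-1}v^{p(x)-1}v = |f|^{p(x)-1}v^{p(x)}$. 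So I pair $|fv|^{p(x)-1}\in\Lcpp(E)$ against $uv\in\Lpp(E)$. Then Theorem~\ref{norm-ineq} gives $\big\||fv|^{\pp-1}\big\|_{\Lcpp(E)} \le \|fv\|_{\Lpp(E)}^{b_*-1} = \|f\|_{\Lpp(v;E)}^{b_*-1}$ where $b_* = p_+$ if $\|f\|_{\Lpp(v;E)}\ge1$ and $b_* = p_-$ if $\|f\|_{\Lpp(v;E)}<1$ — and this $b_*$ is exactly $r_*$ from \eqref{eqn:exponents}. Finally $\|uv\|_{\Lpp(E)} = \|u\|_{\Lpp(v;E)} \le C_1\|f\|_{\Lpp(v;E)}^{(r_*-1)/(p_*-1)}$ by the regularity hypothesis \eqref{hypoest}.

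Assembling the pieces:
\[
\|\bg\|_{\LQpp(E)}^{p_*} \le \rho_{\pp,E}(\sqrt Q\bg) \le K_\pp\,\|f\|_{\Lpp(v;E)}^{r_*-1}\cdot C_1\|f\|_{\Lpp(v;E)}^{\frac{r_*-1}{p_*-1}} = K_\pp C_1\,\|f\|_{\Lpp(v;E)}^{(r_*-1)\left(1+\frac{1}{p_*-1}\right)} = K_\pp C_1\,\|f\|_{\Lpp(v;E)}^{\frac{p_*(r_*-1)}{p_*-1}}.
\]
Taking both sides to the power $\frac{p_*-1}{p_*}$ yields $\|\bg\|_{\LQpp(E)}^{p_*-1} \le (K_\pp C_1)^{(p_*-1)/p_*}\|f\|_{\Lpp(v;E)}^{r_*-1}$, which is the claim with $C = (K_\pp C_1)^{(p_*-1)/p_*} \le \max(1,K_\pp C_1)$ depending only on $\pp, v, E$. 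The main obstacle I anticipate is the bookkeeping of which of $p_\pm$ appears where: one must check that the exponent $p_*$ produced by the lower modular bound in Proposition~\ref{mod-norm:ineq} genuinely coincides with the $p_*$ defined in \eqref{eqn:exponents}, and similarly that the $b_*$ from Theorem~\ref{norm-ineq} coincides with $r_*$; a secondary technical point is justifying that $u$ is an admissible test function via the density Lemma~\ref{density} together with continuity of both sides of the weak formulation under $\TildeHQpp$-convergence, which requires Theorem~\ref{norm-ineq} to see that $|f|^{\pp-2}f v^{\pp} \in \Lcpp(E)$ so the right-hand side pairing is continuous in $\vphi\in\Lpp(v;E)$.
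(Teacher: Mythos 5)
Your proposal is correct and takes essentially the same approach as the paper: test the weak formulation against the solution itself, lower-bound the left side via Proposition~\ref{mod-norm:ineq}, and upper-bound the right side via H\"older, Theorem~\ref{norm-ineq}, and the regularity estimate~\eqref{hypoest}, then raise to the power $(p_*-1)/p_*$. The only addition you make is spelling out the density/approximation justification (via Lemma~\ref{density}) for using $(u,\bg)_f$ as a test function, which the paper invokes implicitly.
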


\begin{proof}
  Let $f\in \Lpp(v;E)$ and $(u,\bg)_f$ be a weak solution of
  \eqref{nprob} with data $f$.  By Proposition \ref{mod-norm:ineq},
  H\"older's inequality, the regularity estimate \eqref{hypoest}, and
  Theorem~\ref{norm-ineq}, and using the weak solution $(u,\bg)_f$ itself as a test function in the definition of weak solution, we have that
\begin{align*}
    \|\bg\|_{\LQpp(E)}^{p_*} & \leq \int_E |\sqrt{Q(x)}\bg(x)|^{p(x)} dx \\
    & = \int_E |\sqrt{Q(x)}\bg(x)|^{p(x)-2} \bg(x)^T Q(x) \bg(x) dx \\
    & = - \int_E |f(x)|^{p(x)-2} f(x) u(x) v(x)^{p(x)} dx \\
    & \leq \int_E |f(x)|^{p(x)-1} v(x)^{p(x)-1} |u(x)| v(x) dx \\
    & \leq K_\pp \|(fv)^{\pp-1}\|_{\Lcpp(E)} \|uv\|_{\Lpp(E)} \\
    & \leq K_\pp \|f\|_{\Lpp(v;E)}^{r_*-1} \|u\|_{\Lpp(v;E)}. \\
    & \leq K_\pp C_1\|f\|_{\Lpp(v;E)}^{r_*-1} \|f\|_{\Lpp(v;E)}^{\frac{r_*-1}{p_*-1}}.
\end{align*}
Note that in the second to last inequality, we used that fact that in
this case the exponent $b_*$ in Theorem~\ref{norm-ineq} equals $r_*$. 
Therefore, if we raise both sides to the power of $(p_*-1)/p_*$, we get
\[ \|\bg\|_{\LQpp(E)}^{p_*-1} \leq C \|f\|_{\Lpp(v;E)}^{r_*-1},\]
where $C=C(\pp,v,E)$.
\end{proof}

To prove that the Poincar\'e inequality holds, fix $f\in
C^1(\overline{E})$.  We will first consider the
special case where $f_{E,v}=0$ and  $\|f\|_{\Lpp(v;E)}=1$.   Then by
Proposition~\ref{normalized-mod}, the definition of weak solution with
$f$ as our test function, H\"older's inequality, and
Theorem~\ref{norm-ineq}, 
\begin{align*}
  \|f\|_{\Lpp(v;E)}
  &= \int_E |f(x)v(x)|^{p(x)} dx \\
    & = \int_E |f(x)|^{p(x)-2} f(x) f(x) v(x)^{p(x)} dx\\
    & \leq \int_E |\sqrt{Q(x)} \bg(x)|^{p(x)-2} |\nabla f(x)^T Q(x) \bg(x)| dx\\
    & \leq \int_E |\sqrt{Q(x)} \bg(x)|^{p(x)-1} |\sqrt{Q(x)}\nabla f(x)| dx  \\
    & \leq K_\pp \||\sqrt{Q}\bg|^{\pp-1}\|_{\cpp} \|\nabla f\|_{\LQpp(E)} \\
    & \leq K_\pp \|\bg\|_{\LQpp(E)}^{b_*-1} \|\nabla f\|_{\LQpp(E)}.
\end{align*}
By Lemma \ref{N->PC:p*r*} and our assumption that $\|f\|_{\Lpp(v;E)} =1$, we find
%
%
%
\[ \|f\|_{\Lpp(v;E)} \leq C \|\nabla f\|_{\LQpp(E)},\]
where $C=C(\pp,v,E)$.  This is what we wanted to prove.\\

To prove the general case, let $f_0= f-f_{v,E}$, and
$f_1 = f_0/\|f_0\|_{\Lp(v;E)}$.  Then $f_1$ has zero mean and
$\|f_1\|_{\Lp(v;E)}=1$, so by the previous case $f_1$ satisfies the
Poincar\'e inequality.  But by the homogeneity of this inequality, and
since $\|f_0\|_{\Lp(v;E)} \nabla f_1 = \nabla f_0=\nabla f$, we have
that $f$ satisfies the Poincar\'e inequality as well.  This completes
the proof.

\section{$\pp$-Poincar\'e Implies $\pp$-Neumann}\label{section:PC->N}

In this section we will give the second half of the proof of
Theorem~\ref{main-result}.  
Fix $\pp\in \Pp(E)$, $1 < p_- \leq p_+ < \infty$, let $v$ be a weight in $\Omega$ with
$v\in \Lpp(E)$, and $Q$ a measurable matrix function with
$\gamma^{1/2}\in \Lpp(E)$.   Assume that the Poincar\'e inequality in
Definition~\ref{PC-prop} holds.  We will show that
Definition~\ref{N-prop} holds by showing that a weak solution
to~\eqref{nprob} exists and that the regularity
estimate~\eqref{hypoest} is satisfied.

To show the existence of a weak solution to the  Neumann problem
\eqref{nprob}, we will apply Minty's theorem~\cite{MintysThm}.  To
state it, we introduce some notation.
Given a reflexive Banach space $\mathcal{B}$, denote its dual space by
$\mathcal{B}^*$.  Given a functional $\alpha\in \mathcal{B}^*$, write
its value at $\vphi\in \mathcal{B}$ as
$\alpha(\vphi) = \langle \alpha,\vphi\rangle$.  Thus, if
$\beta:\mathcal{B} \rightarrow\mathcal{B}^*$ and $u\in \mathcal{B}$,
then we have $\beta(u)\in \mathcal{B}^*$ and so its value at $\vphi$ is
denoted by $\beta(u)(\vphi) = \langle \beta(u),\vphi\rangle$.

\begin{theorem}{(Minty's Theorem, \cite{MintysThm})}\label{minty}
  Let $\mathcal{B}$ be a reflexive, separable Banach space and fix
  $\Gamma\in \mathcal{B}^*$.  Suppose that
  $\mathcal{T}:\mathcal{B} \rightarrow \mathcal{B}^*$ is a bounded
  operator that is:
\begin{enumerate}

\item Monotone:  $\langle
  \mathcal{T}(u)-\mathcal{T}(\vphi),u-\vphi \rangle \geq 0$ for all
  $u,\vphi \in \mathcal{B}$; 

\item Hemicontinuous:  for $z\in \mathbb{R}$, the mapping
  $z\rightarrow \langle\mathcal{T}(u+z\vphi),\vphi \rangle$ is
  continuous for all $u,\varphi \in\mathcal{B}$;

\item Almost Coercive:   there exists a constant $\lambda>0$ so
  that $\langle\mathcal{T}(u),u\rangle > \langle \Gamma,u\rangle$
  for any $u\in\mathcal{B}$ satisfying $\|u\|_\mathcal{B} >\lambda$. 

\end{enumerate}
Then the set of $u\in \mathcal{B}$ such that  $\mathcal{T}(u) = \Gamma$ is non-empty.
\end{theorem}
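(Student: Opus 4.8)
The plan is to prove Theorem~\ref{minty} by the classical Galerkin (Browder--Minty) argument. First I would reduce to the case $\Gamma = 0$: replacing $\mathcal{T}$ by the operator $\mathcal{S}$ defined by $\mathcal{S}(u) = \mathcal{T}(u) - \Gamma$, one checks that $\mathcal{S}$ is again bounded, monotone and hemicontinuous, and that almost coercivity becomes $\langle \mathcal{S}(u), u\rangle > 0$ whenever $\|u\|_{\mathcal{B}} > \lambda$; it then suffices to produce $u \in \mathcal{B}$ with $\mathcal{S}(u) = 0$. Using separability of $\mathcal{B}$, fix a sequence of linearly independent vectors $\{w_j\}_{j=1}^{\infty}$ whose linear span $V_\infty = \bigcup_n V_n$, with $V_n = \operatorname{span}\{w_1, \dots, w_n\}$, is dense in $\mathcal{B}$.

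Next I would solve the problem on each finite-dimensional space $V_n$. Identifying $V_n$ with $\R^n$ via the basis $\{w_1, \dots, w_n\}$, define $P_n \colon \R^n \to \R^n$ by letting the $j$-th coordinate of $P_n(c)$ be $\langle \mathcal{S}(\sum_i c_i w_i), w_j\rangle$. The standard fact that a monotone, hemicontinuous map from a finite-dimensional normed space into its dual is continuous shows that $P_n$ is continuous. Since all norms on $V_n$ are equivalent, the coercivity of $\mathcal{S}$ provides $R_n > 0$ such that $\langle P_n(c), c\rangle_{\R^n} = \langle \mathcal{S}(\sum_i c_i w_i), \sum_i c_i w_i\rangle > 0$ for every $c$ with $|c| = R_n$. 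The usual corollary of Brouwer's fixed point theorem (the ``acute angle'' lemma) then yields $c^{(n)}$ with $|c^{(n)}| \le R_n$ and $P_n(c^{(n)}) = 0$; writing $u_n = \sum_i c^{(n)}_i w_i$, we get $\langle \mathcal{S}(u_n), w_j\rangle = 0$ for $1 \le j \le n$, and in particular $\langle \mathcal{S}(u_n), u_n\rangle = 0$. Feeding this into the coercivity in contrapositive form forces $\|u_n\|_{\mathcal{B}} \le \lambda$, so $\{u_n\}$ is bounded in $\mathcal{B}$.

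Then I would pass to the limit. By reflexivity of $\mathcal{B}$, some subsequence satisfies $u_n \rightharpoonup u$ in $\mathcal{B}$; since $\mathcal{S}$ is a bounded operator, $\{\mathcal{S}(u_n)\}$ is bounded in $\mathcal{B}^*$, which is itself reflexive as the dual of a reflexive space, so along a further subsequence $\mathcal{S}(u_n) \rightharpoonup \xi$ in $\mathcal{B}^*$. For each fixed $j$ and all $n \ge j$ we have $\langle \mathcal{S}(u_n), w_j\rangle = 0$, hence $\langle \xi, w_j\rangle = 0$ for every $j$, so $\xi = 0$ by density of $V_\infty$. Now apply Minty's device: for arbitrary $v \in \mathcal{B}$, monotonicity gives $0 \le \langle \mathcal{S}(u_n) - \mathcal{S}(v),\, u_n - v\rangle$; expanding the pairing and using $\langle \mathcal{S}(u_n), u_n\rangle = 0$, $\langle \mathcal{S}(u_n), v\rangle \to \langle \xi, v\rangle = 0$, and $\langle \mathcal{S}(v), u_n\rangle \to \langle \mathcal{S}(v), u\rangle$, we obtain in the limit $\langle \mathcal{S}(v),\, v - u\rangle \ge 0$ for all $v \in \mathcal{B}$. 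Taking $v = u + t z$ with $t > 0$ and $z \in \mathcal{B}$ arbitrary, dividing by $t$, and letting $t \to 0^+$ (this is exactly where hemicontinuity enters) gives $\langle \mathcal{S}(u), z\rangle \ge 0$; replacing $z$ by $-z$ yields $\mathcal{S}(u) = 0$, i.e.\ $\mathcal{T}(u) = \Gamma$, so the desired solution set is non-empty.

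I expect the main obstacle to be the passage to the limit rather than the finite-dimensional step. The difficulty is that $\mathcal{S}$ need not be continuous from the weak topology of $\mathcal{B}$ into the weak topology of $\mathcal{B}^*$, so the weak limit $\xi$ cannot be identified with $\mathcal{S}(u)$ directly, and $\mathcal{S}(u) = 0$ cannot simply be read off from the Galerkin identities $\langle \mathcal{S}(u_n), w_j\rangle = 0$. It is the combination of monotonicity --- which converts those identities into the one-sided inequality $\langle \mathcal{S}(v), v - u\rangle \ge 0$ --- and hemicontinuity --- which lets $v$ approach $u$ along rays --- that resolves it; this is the heart of Minty's trick. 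A secondary point best isolated as a separate lemma is the continuity of $P_n$, that is, that a monotone hemicontinuous operator on a finite-dimensional space is continuous (together with the acute-angle corollary of Brouwer's theorem, if one wants a fully self-contained treatment).
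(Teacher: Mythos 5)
The paper does not prove Theorem~\ref{minty}; it is quoted as a known result and cited to Showalter~\cite{MintysThm}, so there is no in-paper argument to compare against. Your Galerkin/Browder--Minty argument is correct and is essentially the standard proof one finds in Showalter and similar references: the reduction to $\Gamma=0$, the finite-dimensional solvability via the acute-angle corollary of Brouwer's theorem, the uniform bound $\|u_n\|_{\mathcal{B}}\le\lambda$ obtained from $\langle\mathcal{S}(u_n),u_n\rangle=0$ and the (strict) almost-coercivity, the extraction of weak limits $u_n\rightharpoonup u$ and $\mathcal{S}(u_n)\rightharpoonup\xi=0$, and finally Minty's trick combined with hemicontinuity to conclude $\mathcal{S}(u)=0$. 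You also correctly isolate the one sublemma worth stating separately, namely that a monotone hemicontinuous map restricted to a finite-dimensional subspace is continuous, which underlies the application of Brouwer. Nothing to fix.
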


To apply Minty's theorem to prove the existence of a weak solution,
let $\mathcal{B} = \TildeHQpp(v;E)$.  Note that with our hypotheses,
by Theorem~\ref{TildeHQpp-complete}, $\TildeHQpp(v;E)$ is a reflexive,
separable Banach space.  
We now define the operators $\Gamma$
and $\T$ to using the right and left-hand  sides of the equation in Definition~\ref{weaksol}.

\begin{definition}\label{Gamma}
  Given $f \in \Lpp(v;E)$, define
  $\Gamma = \Gamma_f: \TildeHQpp(v;E) \rightarrow \R$ by setting
\[ \langle \Gamma,\bw \rangle = -\int_E |f(x)|^{p(x)-2} f(x) w(x) (v(x))^{p(x)} dx\]
for any $\bw=(w,\bh)\in \TildeHQpp(v;E)$. 
\end{definition}
\begin{rem} $\Gamma_f$ clearly depends on $f \in \Lpp(v;E)$. But for
  ease of notation we will simply write $\Gamma$ where $f$ is
  understood in context.
\end{rem}

\begin{definition}\label{oper-T}
Define $\T: \TildeHQpp(v;E) \rightarrow \left( \TildeHQpp(v;E)\right)^*$ by setting  
\[\langle \T(\bu), \bw\rangle = \int_E \left| \sqrt{Q(x)}\bg(x)\right| ^{p(x)-2} \bh^T(x) Q(x) \bg(x) dx \]
for $\bu = (u, \bg),~\bw = (w,\bh) \in\TildeHQpp(v;E)$,
\end{definition}

Clearly, $\bu = (u, \bg)$ is a weak solution of~\eqref{nprob} if and
only if $\langle \T(\bu), \bw\rangle = \langle \Gamma,\bw \rangle$ for
all $\bw\in \TildeHQpp(v;E)$.  Therefore, we will have shown that a
weak solution exists if we can show that the operators $\Gamma_f$ and
$\T$ satisfy the hypotheses of Minty's Theorem. 

\begin{lemma} 
Given $f \in \Lpp(v;E)$, $\Gamma= \Gamma_f$ is a bounded,  linear functional on $\TildeHQpp(v;E)$.
\end{lemma}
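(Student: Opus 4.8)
The plan is to verify the two assertions in turn: linearity, which is immediate from the definition, and boundedness, which requires an estimate in terms of the norm on $\TildeHQpp(v;E)$. Linearity of $\bw \mapsto \langle \Gamma, \bw\rangle$ is clear since the map $\bw = (w,\bh) \mapsto w$ is linear from $\TildeHQpp(v;E)$ into $\Lpp(v;E)$ and integration against the fixed function $|f|^{\pp-2}f v^{\pp}$ is linear in $w$. So the content is boundedness.

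First I would bound $|\langle \Gamma, \bw\rangle|$ by $\int_E |f(x)|^{p(x)-1} |w(x)| v(x)^{p(x)}\,dx$, rewrite $v(x)^{p(x)} = v(x)^{p(x)-1} v(x)$, and then apply H\"older's inequality (Theorem~\ref{Holder}) to the pair $(|f|^{\pp-1} v^{\pp-1}, \, wv)$, which gives
\[ |\langle \Gamma, \bw\rangle| \leq K_\pp \, \big\| |fv|^{\pp-1} \big\|_{\Lcpp(E)} \, \|wv\|_{\Lpp(E)} = K_\pp \, \big\| |fv|^{\pp-1} \big\|_{\Lcpp(E)} \, \|w\|_{\Lpp(v;E)}. \]
Next I would invoke Theorem~\ref{norm-ineq} (with $fv$ in the role of $f$ there) to conclude $\big\| |fv|^{\pp-1}\big\|_{\Lcpp(E)} \leq \|fv\|_{\Lpp(E)}^{b_*-1} = \|f\|_{\Lpp(v;E)}^{b_*-1}$, which is finite since $f \in \Lpp(v;E)$; here $b_* = p_+$ or $p_-$ depending on whether $\|f\|_{\Lpp(v;E)} \geq 1$ or not. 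Finally, since $\|w\|_{\Lpp(v;E)} \leq \|\bw\|_{\TildeHQpp(v;E)}$, we obtain $|\langle \Gamma, \bw\rangle| \leq K_\pp \|f\|_{\Lpp(v;E)}^{b_*-1} \|\bw\|_{\TildeHQpp(v;E)}$, establishing that $\Gamma_f$ is bounded with operator norm controlled by $K_\pp \|f\|_{\Lpp(v;E)}^{b_*-1}$.

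There is no serious obstacle here; the only point requiring a little care is the bookkeeping on the exponent $b_*$, which (unlike the constant exponent case) is not simply $p-1$ but takes one of two values according to the size of $\|f\|_{\Lpp(v;E)}$ — this is exactly the phenomenon flagged in the remark after Theorem~\ref{norm-ineq} and in the introduction's discussion of why the regularity exponent $\frac{r_*-1}{p_*-1}$ appears. One should also note that $\Lpp(v;E)$ being a Banach space (Theorem~\ref{weightedLpp-BSR}) makes ``bounded linear functional'' the right notion, and that the finiteness of $\| |fv|^{\pp-1}\|_{\Lcpp(E)}$ is precisely the ``only if'' half of Theorem~\ref{norm-ineq}.
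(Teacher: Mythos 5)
Your proof is correct and follows essentially the same route as the paper: linearity by inspection, then H\"older's inequality applied to the pair $(|fv|^{\pp-1}, wv)$, then Theorem~\ref{norm-ineq} to control $\||fv|^{\pp-1}\|_{\Lcpp(E)}$ by $\|f\|_{\Lpp(v;E)}^{b_*-1}<\infty$, and finally $\|w\|_{\Lpp(v;E)}\leq\|\bw\|_{\TildeHQpp(v;E)}$. Your remarks on the exponent $b_*$ are accurate but incidental to the argument; the paper simply records $C = K_\pp\||fv|^{\pp-1}\|_{\Lcpp(E)}$ as the bound.
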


\begin{proof}
We first show that $\Gamma$ is linear. Let $\bu = (u, \bg), \,\bw =
(w,\bh)$ be in $\TildeHQpp(v;E)$. Then for all $\alpha,\,\beta \in \R$, 
\begin{equation*}
  \langle \Gamma, \alpha \bu + \beta \bw\rangle
 = -\int_E |f(x)|^{p(x)-2}f(x) (\alpha u(x) + \beta w(x))(v(x))^{p(x)}dx
 = \alpha \langle \Gamma, \bu \rangle + \beta \langle \Gamma, \bw
   \rangle. 
 \end{equation*}
 
 To show that $\Gamma$ is bounded, it will suffice to show that there
 exists a  constant $C=C(f, v, \pp)$ such that
\begin{equation} \label{eqn:gamma-bdd}
\left|\langle \Gamma,\bw\rangle \right| \leq C\|wv\|_{\Lp(E)},
\end{equation}
since $\|wv\|_{\Lp(E)} = \|w\|_{\Lpp(v;E)} \leq
\|\w\|_{\TildeHQpp(v;E)}$.
 By H\"older's inequality,
\begin{multline*}
  |\langle \Gamma,\bw\rangle|
   = \left|\int_E |f(x)|^{p(x)-2} f(x)w(x)v(x)^{p(x)}\,dx\right| \\
   \leq \int_E | f(x)|^{p(x)-1}v(x)^{p(x)-1}w(x)v(x)\,dx 
    \leq K_{\pp}\| |fv|^{\pp-1} \|_{L^\cpp(E)} \|wv\|_{\Lp(E)}. 
  \end{multline*}
  
Since $f \in \Lpp(v;E)$, by Theorem \ref{norm-ineq} we have that
\[ \||fv|^{\pp-1}\|_{L^\cpp(E)} \leq \|f\|_{\Lpp(v;E)}^{b_*-1}<
  \infty.\]
Therefore, if we let $C=K_\pp \||fv|^{\pp-1}\|_{L^\cpp(E)}$, we have
that~\eqref{eqn:gamma-bdd} holds.
\end{proof}

We now prove that $\T$ is bounded, monotone, hemicontinuous.

\begin{lemma}
     $\T$ is a bounded operator.
\end{lemma}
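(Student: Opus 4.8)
The plan is to show that $\T$ sends bounded subsets of $\TildeHQpp(v;E)$ into bounded subsets of the dual space; equivalently, to produce a bound for $\|\T(\bu)\|_{(\TildeHQpp(v;E))^*}$ in terms of a function of $\|\bu\|_{\TildeHQpp(v;E)}$ that stays finite on bounded sets. Fix $\bu = (u,\bg)$ and an arbitrary $\bw = (w,\bh)$ in $\TildeHQpp(v;E)$. First I would rewrite the integrand in Definition~\ref{oper-T} using $\bh^T Q \bg = (\sqrt{Q}\bh)^T(\sqrt{Q}\bg)$ and apply the pointwise Cauchy--Schwarz inequality together with $|\sqrt{Q}\bg|^{p(x)-2}|\sqrt Q\bg| = |\sqrt Q\bg|^{p(x)-1}$ to obtain
\[ |\langle \T(\bu),\bw\rangle| \le \int_E |\sqrt{Q(x)}\bg(x)|^{p(x)-1}\,|\sqrt{Q(x)}\bh(x)|\,dx. \]

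Next I would apply H\"older's inequality (Theorem~\ref{Holder}) with the conjugate pair $\cpp,\pp$ to bound the right-hand side by $K_\pp \big\| |\sqrt{Q}\bg|^{\pp-1}\big\|_{\Lcpp(E)}\,\big\||\sqrt{Q}\bh|\big\|_{\Lpp(E)}$. The second factor equals $\|\bh\|_{\LQpp(E)}\le \|\bw\|_{\TildeHQpp(v;E)}$ by definition of the product norm. For the first factor, observe that $|\sqrt Q\bg|\in\Lpp(E)$ since $\bg\in\LQpp(E)$, so Theorem~\ref{norm-ineq} applies and gives $\big\||\sqrt{Q}\bg|^{\pp-1}\big\|_{\Lcpp(E)}\le \|\bg\|_{\LQpp(E)}^{b_*-1}$, where $b_* = p_+$ if $\|\bg\|_{\LQpp(E)}\ge 1$ and $b_* = p_-$ otherwise; this also shows the integral defining $\langle\T(\bu),\bw\rangle$ is absolutely convergent, so $\T(\bu)$ is a well-defined (obviously linear) functional. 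Combining the estimates,
\[ |\langle \T(\bu),\bw\rangle| \le K_\pp\,\|\bg\|_{\LQpp(E)}^{b_*-1}\,\|\bw\|_{\TildeHQpp(v;E)}, \]
and taking the supremum over $\|\bw\|_{\TildeHQpp(v;E)}\le 1$ yields $\|\T(\bu)\|_{(\TildeHQpp(v;E))^*}\le K_\pp\,\|\bg\|_{\LQpp(E)}^{b_*-1} \le K_\pp\max\{\|\bu\|_{\TildeHQpp(v;E)}^{p_--1},\,\|\bu\|_{\TildeHQpp(v;E)}^{p_+-1}\}$, using $\|\bg\|_{\LQpp(E)}\le \|\bu\|_{\TildeHQpp(v;E)}$ and monotonicity of $t\mapsto t^{b_*-1}$. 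The right-hand side is finite whenever $\|\bu\|_{\TildeHQpp(v;E)}$ is, so $\T$ is bounded.

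There is no genuine obstacle here; the only things needing care are (i) the verification that the defining integral converges absolutely, which is handled by the same Cauchy--Schwarz/H\"older chain plus the finiteness statement in Theorem~\ref{norm-ineq}, and (ii) the case split in the exponent $b_*$, which is why the conclusion is naturally phrased as a maximum of two powers of the norm rather than a single power — in contrast to the constant-exponent case, there is no single exponent that works for all $\bu$.
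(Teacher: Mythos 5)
Your proposal is correct and uses the same chain of estimates as the paper's proof: pointwise Cauchy--Schwarz to reduce to $\int_E |\sqrt{Q}\bg|^{\pp-1}|\sqrt{Q}\bh|\,dx$, then H\"older's inequality (Theorem~\ref{Holder}) and Theorem~\ref{norm-ineq} to extract the powers of $\|\bg\|_{\LQpp(E)}$ and $\|\bh\|_{\LQpp(E)}$. The one minor difference is in the framing of the conclusion: the paper normalizes $\|\bu\|_{\HQpp(v;E)}=\|\bw\|_{\HQpp(v;E)}=1$ and exhibits the uniform bound $K_\pp$ on the unit sphere, a formulation that implicitly leans on homogeneity; since $\T$ is nonlinear, your explicit growth estimate $\|\T(\bu)\|_{(\TildeHQpp(v;E))^*}\le K_\pp\max\{\|\bu\|_{\TildeHQpp(v;E)}^{p_--1},\|\bu\|_{\TildeHQpp(v;E)}^{p_+-1}\}$ more directly delivers the relevant notion of boundedness (bounded sets mapped to bounded sets) needed for Minty's theorem.
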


\begin{proof}
  We will prove that $\T$ is bounded by showing the operator norm of
  $T$ is uniformly bounded. The  norm of
  $\T: \TildeHQpp(v;E) \rightarrow \left( \TildeHQpp(v;E)\right)^*$ is
  given by
\[ \|\T\|_{\op} = \sup\{|\T(\bu)|_{op} : \|\bu\|_{\HQpp(v;E)}=1\}, \]
where
$|\T(\bu)|_{op} = \sup\{|\langle \T(\bu), \w\rangle |
:\|\bw\|_{\HQpp(v;E)} = 1 \}$. Thus, it will suffice to show that there
exists a constant $C$ such that for all
$\bu,\bw \in \TildeHQpp(v;E)$ with
$\|\bu\|_{\HQpp(v;E)} = \|\bw \|_{\HQpp(v;E)} = 1$,
$|\langle \T(\bu),\bw\rangle | \leq C$. 
By Theorem \ref{norm-ineq},  for any $f \in \Lpp(v;E)$,
$\||f|^{\pp-1}\|_{L^\cpp(E)} \leq \|f\|_{\Lp(E)}^{p_*-1}$.
Therefore, by  H\"older's inequality we have that
\begin{align*}
  |\langle \T(\bu),\bw\rangle|
  & = \left| \int_E |\sqrt{Q(x)}\bg(x)|^{p(x)-2} (\bh(x))^T Q(x) \bg(x) dx \right| \\
      & \leq \int_E |\sqrt{Q(x)}\bg(x)|^{p(x)-1} |\sqrt{Q(x)}\bh(x)| dx \\
    & \leq K_{\Lp(E)} \| |\sqrt{Q}\bg|^{\pp-1}\|_{L^\cpp(E)} \||\sqrt{Q}\bh\|_{\Lp(E)}\\
    & \leq K_\pp \||\sqrt{Q}\bg|\|_{\Lp(E)}^{b_*-1} \||\sqrt{Q}\bh|\|_{\Lp(E)} \\
    & = K_\pp \|\bg\|_{\LQpp(E)}^{b_*-1} \|\bh\|_{\LQpp(E)} \\
      & \leq K_\pp.
\end{align*}
 Thus, $\T$ is bounded.
\end{proof}

\begin{lemma}
   $\T$ is Monotone. 
\end{lemma}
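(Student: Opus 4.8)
The plan is to show that $\langle \T(\bu) - \T(\bv), \bu - \bv \rangle \geq 0$ for all $\bu = (u,\bg)$, $\bv = (v_1,\bg_1)$ in $\TildeHQpp(v;E)$ by reducing the inequality to a pointwise algebraic inequality in $\R^n$ and then integrating. Writing out the definition of $\T$ from Definition~\ref{oper-T}, the quantity $\langle \T(\bu) - \T(\bv), \bu - \bv\rangle$ equals
\[
\int_E \Big( |\sqrt{Q}\bg|^{\pp-2}(\sqrt{Q}\bg) - |\sqrt{Q}\bg_1|^{\pp-2}(\sqrt{Q}\bg_1)\Big) \cdot \Big(\sqrt{Q}\bg - \sqrt{Q}\bg_1\Big)\, dx,
\]
after using that $\bh^T Q \bg = (\sqrt{Q}\bh)\cdot(\sqrt{Q}\bg)$ and grouping the two terms of $\T(\bu)-\T(\bv)$ against $\bu-\bv$. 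Setting $\xi = \sqrt{Q(x)}\bg(x)$ and $\eta = \sqrt{Q(x)}\bg_1(x)$, and $p = p(x)$, the integrand is exactly
\[
\big( |\xi|^{p-2}\xi - |\eta|^{p-2}\eta \big)\cdot(\xi - \eta).
\]

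The key step is the classical monotonicity inequality for the map $\xi \mapsto |\xi|^{p-2}\xi$ on $\R^n$: for every $p > 1$ and all $\xi, \eta \in \R^n$,
\[
\big( |\xi|^{p-2}\xi - |\eta|^{p-2}\eta \big)\cdot(\xi - \eta) \geq 0,
\]
with equality iff $\xi = \eta$. This is a standard fact (it follows, e.g., from the convexity of $\xi \mapsto \frac{1}{p}|\xi|^p$, whose gradient is $|\xi|^{p-2}\xi$; a monotone gradient characterizes convexity). Since $1 < p_- \leq p(x) \leq p_+ < \infty$, this applies pointwise a.e.\ on $E$ with $p = p(x)$, so the integrand is nonnegative a.e., and therefore the integral — which is finite, since each $\langle \T(\bu),\bv\rangle$ term is finite by the boundedness of $\T$ just proved and Hölder's inequality — is nonnegative. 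This gives $\langle \T(\bu)-\T(\bv), \bu-\bv\rangle \geq 0$, i.e., $\T$ is monotone.

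I do not expect a serious obstacle here; the only care needed is (i) to record the pointwise convexity/monotonicity inequality with a one-line justification or a citation, and (ii) to verify that all the integrals involved are absolutely convergent so that the grouping of $\T(\bu)-\T(\bv)$ against $\bu-\bv$ under a single integral sign is legitimate — this follows immediately from the preceding lemma showing $\T$ is bounded, applied to $\bu$, $\bv$, $\bu-\bv$ (or directly from Hölder's inequality as in that proof, together with $\bg,\bg_1 \in \LQpp(E)$ and Theorem~\ref{norm-ineq}). If a cleaner self-contained treatment is desired, one can instead prove the pointwise inequality by hand using the Cauchy–Schwarz inequality: $(|\xi|^{p-2}\xi - |\eta|^{p-2}\eta)\cdot(\xi-\eta) = |\xi|^p + |\eta|^p - (|\xi|^{p-2}+|\eta|^{p-2})\,\xi\cdot\eta \geq |\xi|^p + |\eta|^p - |\xi|^{p-1}|\eta| - |\eta|^{p-1}|\xi| = (|\xi|^{p-1}-|\eta|^{p-1})(|\xi|-|\eta|) \geq 0$, the last factor pair having the same sign since $t \mapsto t^{p-1}$ is increasing on $[0,\infty)$.
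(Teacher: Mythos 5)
Your proof is correct and follows essentially the same route as the paper: rewrite $\langle \T(\bu)-\T(\bw),\bu-\bw\rangle$ as an integral whose integrand is pointwise of the form $\langle |s|^{p-2}s - |r|^{p-2}r, s-r\rangle_{\R^n}$ with $s=\sqrt{Q}\bg$, $r=\sqrt{Q}\bh$, $p=p(x)>1$, and invoke the classical nonnegativity of this expression (the paper cites Lindqvist/Biegert, while you also supply a short self-contained Cauchy--Schwarz derivation of the pointwise inequality, which is a welcome but minor variation).
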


\begin{proof}
Let $\bu=(u,\bg)$ and $\bw=(w,\bh)$ be in $\TildeHQpp(v;E)$. Then
\begin{align*}
& \langle \T(\bu)-\T(\bw), \bu - \bw\rangle\\
&\quad = \langle \T(\bu), \bu - \bw\rangle - \langle \T(\bw), \bu - \bw\rangle \\
& \quad = \int_E |\sqrt{Q}\bg|^{\pp-2} (\bg - \bh)^T Q\bg - |\sqrt{Q} \bh|^{\pp-2}(\bg - \bh)^T Q \bh\,dx \\
& \quad = \int_E (\sqrt{Q}(\bg-\bh))^T (|\sqrt{Q}\bg|^{\pp-2} \sqrt{Q}\bg) - (\sqrt{Q}(\bg-\bh))^T (|\sqrt{Q}\bh|^{\pp-2} \sqrt{Q}\bh)\,dx \\
& \quad = \int_E (\sqrt{Q}(\bg-\bh))^T\left[|\sqrt{Q}\bg|^{\pp-2} \sqrt{Q}\bg - |\sqrt{Q} \bh|^{\pp-2}\sqrt{Q}\bh \right]\,dx\\
& \quad = \int_E \langle |\sqrt{Q}\bg|^{\pp-2}\sqrt{Q}\bg - |\sqrt{Q}\bh|^{\pp-2} \sqrt{Q}\bh , \sqrt{Q}\bg - \sqrt{Q}\bh \rangle_{\R^n}\,dx,
\end{align*}
where  $\langle \cdot , \cdot \rangle_{\R^n}$ denotes the inner
product on $\R^n$. For each $x \in E$, the integrand is of the form
\[ \langle |s|^{p-2} s - |r|^{p-2}r , s-r\rangle_{\R^n},\]
where $s,r \in \R^n$ and $p>1$.  But as noted in
\cite[p.~74]{PLind} (see also~\cite[Section~4]{MR2729257}), this expression is nonnegative. Thus,
$\T$ is monotone.
\end{proof}

\begin{lemma}
   $\T$ is Hemicontinuous.
\end{lemma}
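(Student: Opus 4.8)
The plan is to fix $\bu = (u,\bg)$ and $\bw = (w,\bh)$ in $\TildeHQpp(v;E)$ and show that the real-valued function
\[
   z \longmapsto \langle \T(\bu+z\bw),\bw\rangle
   = \int_E \bigl|\sqrt{Q(x)}\,(\bg(x)+z\bh(x))\bigr|^{p(x)-2}\,
     \bh(x)^T Q(x)(\bg(x)+z\bh(x))\,dx
\]
is continuous on $\R$; by a routine subsequence argument it suffices to show that for every $z_0\in\R$ and every sequence $z_n\to z_0$ the corresponding integrals converge, and I would establish this with the Dominated Convergence Theorem. It is convenient to first rewrite the integrand: since $\sqrt{Q}$ is self-adjoint and $\bh^T Q\,\bv = (\sqrt{Q}\bh)^T(\sqrt{Q}\bv) = \langle\sqrt{Q}\bh,\sqrt{Q}\bv\rangle_{\R^n}$, for a.e.\ $x$ the integrand equals $\langle \sqrt{Q(x)}\bh(x),\,\psi_{p(x)}(\sqrt{Q(x)}\bg(x)+z\,\sqrt{Q(x)}\bh(x))\rangle_{\R^n}$, where $\psi_p(s)=|s|^{p-2}s$ for $s\in\R^n\setminus\{0\}$ and $\psi_p(0)=0$.

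For the pointwise convergence, note that $p_->1$ forces $p(x)>1$ for a.e.\ $x$, and for such $x$ the map $\psi_{p(x)}$ is continuous on all of $\R^n$, the origin included, because $|\psi_p(s)|=|s|^{p-1}\to 0$ as $s\to 0$ when $p>1$. Since $z\mapsto \sqrt{Q(x)}\bg(x)+z\,\sqrt{Q(x)}\bh(x)$ is affine, hence continuous, the integrand at $x$ is a continuous function of $z$, so it converges along $z_n\to z_0$. For the domination, I would restrict to those $n$ with $|z_n|\le M := |z_0|+1$; then
\[
   \bigl|\sqrt{Q(x)}(\bg(x)+z_n\bh(x))\bigr|
   \le \bigl|\sqrt{Q(x)}\bg(x)\bigr| + M\bigl|\sqrt{Q(x)}\bh(x)\bigr| =: G(x),
\]
so the $n$-th integrand is bounded in absolute value by $G(x)^{p(x)-1}\bigl|\sqrt{Q(x)}\bh(x)\bigr|$. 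Because $\bu,\bw\in\TildeHQpp(v;E)$ we have $|\sqrt{Q}\bg|,\,|\sqrt{Q}\bh|\in\Lpp(E)$ (with $\Lpp$-norms equal to $\|\bg\|_{\LQpp(E)}$ and $\|\bh\|_{\LQpp(E)}$), hence $G\in\Lpp(E)$; Theorem~\ref{norm-ineq} then gives $\|G^{\pp-1}\|_{\Lcpp(E)}<\infty$, and H\"older's inequality (Theorem~\ref{Holder}) yields
\[
   \int_E G(x)^{p(x)-1}\bigl|\sqrt{Q(x)}\bh(x)\bigr|\,dx
   \le K_\pp\,\|G^{\pp-1}\|_{\Lcpp(E)}\,\bigl\|\,|\sqrt{Q}\bh|\,\bigr\|_{\Lpp(E)}<\infty .
\]
This is the required integrable majorant, so the Dominated Convergence Theorem applies and $\langle\T(\bu+z_n\bw),\bw\rangle\to\langle\T(\bu+z_0\bw),\bw\rangle$; as $z_0$ and the sequence were arbitrary, $\T$ is hemicontinuous.

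The one place I would be careful is the pointwise continuity of the integrand at those $x$ where $\sqrt{Q(x)}(\bg(x)+z\bh(x))$ vanishes for some $z$: this is exactly handled by the observation that $\psi_p$ extends continuously by $0$ across the origin for every $p>1$, so no exceptional set beyond the null set $\{x:p(x)\le 1\}$ intervenes. Everything else is a routine combination of the Dominated Convergence Theorem, the variable-exponent H\"older inequality, and Theorem~\ref{norm-ineq}, and I do not expect a genuine obstacle here.
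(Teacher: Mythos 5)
Your proof is correct, but it takes a genuinely different route from the paper's. The paper proves hemicontinuity by directly estimating the difference $\langle \T(\bu+z\bw)-\T(\bu+y\bw),\bw\rangle$: it splits the domain into $E^+=\{p(x)>2\}$ and $E^-=\{p(x)\le 2\}$ and on each piece applies a quantitative pointwise inequality of Lindqvist type for $|\br|^{p-2}\br-|\bs|^{p-2}\bs$ (one valid for $p>2$, one for $1<p\le 2$), then controls the resulting integrals with Proposition~\ref{mod-norm:ineq}, H\"older's inequality, and Theorem~\ref{norm-ineq} to get bounds that vanish as $z\to y$. You instead argue qualitatively via the Dominated Convergence Theorem: the pointwise continuity of $s\mapsto|s|^{p-2}s$ for $p>1$ (including at the origin) gives convergence of the integrand, and a single uniform majorant $G^{\pp-1}|\sqrt{Q}\bh|$, shown integrable by exactly the same H\"older/$\|{\cdot}\|^{\pp-1}$ mechanism (Theorems~\ref{Holder} and~\ref{norm-ineq}), justifies passing to the limit. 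Your approach is more elementary and avoids the $E^\pm$ case split and the appeal to the Lindqvist estimates entirely, at the cost of producing no explicit modulus of continuity; since hemicontinuity asks only for continuity of $z\mapsto\langle\T(\bu+z\bw),\bw\rangle$ and not a rate, this trade-off is harmless here, and your argument is a clean alternative. One small thing to make explicit if you write this up: the restriction to $|z_n|\le M$ is legitimate because you are testing sequential continuity and may discard finitely many terms, and the majorant depends on $z_0$ through $M$ but not on $n$, which is exactly what DCT requires.
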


\begin{proof}
Let $z,\,y \in \R$ and let $\bu = (u,\bg)$, $\bw = (w, \bh)$ be in $\TildeHQpp(v;E)$. Define $\psi = \bg +z\bh$ and $\gamma = \bg + y \bh$. Then 
\begin{align}
    & \langle \T(\bu + z\bw) - \T(\bu u+ y \bw), \bw \rangle \nonumber\\
    & \qquad \qquad = \int_E |\sqrt{Q}\psi|^{\pp-2} \bh^T Q \psi
      - |\sqrt{Q}\gamma|^{\pp-2} \bh^T Q \gamma\,dx \nonumber\\
    & \qquad \qquad= \int_E (\sqrt{Q}\bh)^T \left[
      |\sqrt{Q}\psi|^{\pp-2}\sqrt{Q}\psi
      - |\sqrt{Q}\gamma|^{\pp-2} \sqrt{Q}\gamma\right]\, dx \nonumber\\
    & \qquad \qquad = \int_E (\sqrt{Q}\bh)^T \left[ |\br|^{\pp-2} \br
      - | \bs|^{\pp-2}\bs \right]\, dx \label{Hemi-estimate}
\end{align}
where $\br = \sqrt{Q}\psi$ and $\bs =\sqrt{Q} \gamma$. Define
$E^+ = \{x \in E : p(x) >2\}$ and $E^- = \{x \in E : p(x) \leq
2\}$. We will show that the integral \eqref{Hemi-estimate} tends to
$0$ as $z \to y$ by estimating it over $E^+$ and $E^-$
separately.

Observe that our choice of $\br, \bs$ gives
\begin{equation} \label{r-s}
\br - \bs = \sqrt{Q}(\psi - \gamma) = \sqrt{Q}(z\bh - y \bh) = (z-y)\sqrt{Q}\bh.
\end{equation}
Hence,  
\begin{equation}\label{rs-topbound}
  \||\br - \bs|\|_{L^\pp(E)}
  = |z-y|\||\sqrt{Q}\bh|\|_{\Lpp(E)} \leq |z-y|\|\bw\|_{\HQpp(v;E)}
\end{equation}

Furthermore, by an inequality from~\cite[pp. 43, 73]{PLind} (see also~\cite[Section~4]{MR2729257}), we
have that for $\br, \bs \in \R^n$ and $p>2$,
\[\left| |\br|^{p-2}\br - |\bs|^{p-2}\bs\right|
  \leq (p-1)|\br - \bs|(|\bs|^{p-2} + |\br|^{p-2}). \]
If we combine this inequality with \eqref{r-s} and apply them to \eqref{Hemi-estimate} over $E^+$, we get that 
\begin{align}
  & \left|\int_{E^+} (\sqrt{Q}\bh)^T \left[ |\br|^{p(x)-2}\br
    - |\bs|^{p(x)-2} \bs \right] \,dx \right| \nonumber \\
  & \qquad \qquad \leq \int_{E^+} | \sqrt{Q}\bh| |p(x)-1||\br
    - \bs| (|\bs|^{p(x)-2} + |\br|^{p(x)-2}) \,dx \nonumber\\
  & \qquad \leq |z-y||p_+ -1| \int_{E^+} |\sqrt{Q}\bh|^2 \left| |\bs|^{p(x)-2}
    + |\br|^{p(x)-2} \right|\, dx. \label{E+bound}
\end{align}

Since $p(x) >2$ on $E^+$, by H\"older's inequality,
Theorem~\ref{Holder}, with exponents
$\frac{\pp}{2}, \frac{\pp}{\pp-2}$ we have
\begin{multline*}
  \int_{E^+} |\sqrt{Q}\bh|^2 \left| |\bs|^{\pp-2}
    + |\br|^{\pp-2} \right|\,dx \\
  \leq K_{\pp/2} \| |\sqrt{Q} \bh|^2 \|_{L^{\pp/2}(E^+)}
  \left\| |\bs|^{\pp-2} + |\br|^{\pp-2} \right\|_{L^{\frac{\pp}{\pp-2}}(E^+)}.
\end{multline*}
By Proposition~\ref{mod-norm:ineq}, since $\||\sqrt{Q}\bh|\|_{\Lpp(E)}$
is finite, we have that 
\[ \int_{E^+} \left| |\sqrt{Q}\bh|^2 \right|^{p(x)/2} dx
  = \int_{E^+} |\sqrt{Q} \bh|^{p(x)} dx < \infty,\]
and so by the same result we have that
$\||\sqrt{Q}\bh|^2\|_{\pp/2} < \infty$.

By the triangle
inequality,
\[ \| |\bs|^{\pp-2} + |\br|^{\pp-2} \|_{L^{\frac{\pp}{\pp-2}}(E^+)}\leq
  \| |\bs|^{\pp-2}\|_{L^{\frac{\pp}{\pp-2}}(E^+)}
  + \| |\br|^{\pp-2}\|_{L^{\frac{\pp}{\pp-2}}(E^+)}. \]
Observe that
\[ |\bs|^{(\pp-2)\cdot\pp/(\pp-2)} = |\bs|^{\pp} = |\sqrt{Q}\gamma |^{\pp} = |\sqrt{Q}(\bg + y \bh)|^{\pp}.\]
Since $\bg, \bh \in \LQpp(E)$, $\| |\sqrt{Q}(\bg +
y\bh)|\|_{L^{\pp}(E^+)} < \infty$.  But then, again by Proposition~\ref{mod-norm:ineq}, 

\[\int_{E^+}\left|
    |\sqrt{Q}(\bg+y\bh)|^{p(x)-2}\right|^{p(x)/(p(x)-2)} dx
  =  \int_{E^+}|\sqrt{Q}(\bg + y \bh)|^{p(x)} dx < \infty,\]
and so $\| |\bs|^{\pp-2}\|_{L^{\frac{\pp}{\pp-2}}(E^+)}<\infty$.
Similarly, $\| |\br|^{\pp-2}\|_{L^{\frac{\pp}{\pp-2}}(E^+)}$ is finite. Therefore,
\[ \int_{E^+} |\sqrt{Q}\bh|^2 \left| |\bs|^{p(x)-2} +
    |\br|^{p(x)-2}\right|\, dx < \infty, \]
and so \eqref{E+bound}
converges to $0$ as $z \to y$.

\medskip

Now consider the domain $E^-$.   By \cite[p.~43]{PLind} (see also~\cite[Section~4]{MR2729257}) we have that  for $\br, \bs \in \R^n$ and $1 < p \leq 2$,
\[| |\bs|^{p-2} \bs - |\br|^{p-2}\br | \leq  C(p)|\bs -
  \br|^{p-1}.\]
The constant $C(p)$ varies continuously in $p$; since for $x\in
E^-$,  $1<p_-\leq p(x) \leq 2$, we must have that 
\[ C = \sup_{x \in E^-} C(p(x))< \infty.  \]
If we apply this estimate, H\"older's inequality, 
Theorem~\ref{norm-ineq}, and~\eqref{rs-topbound} to \eqref{Hemi-estimate}, we get
\begin{align*}
  & \left| \int_{E^-}
  (\sqrt{Q}\bh)^T \left[ |\br|^{p(x)-2}\br - |\bs|^{p(x)-2}\bs \right]
  \,dx \right| \\
  & \qquad \qquad \leq \int_{E^-} |\sqrt{Q}\bh|
    \left| |\br|^{p(x)-2}\br -|\bs|^{p(x)-2} \bs \right| \,dx \\
      &  \qquad \qquad \leq C \int_{E^-} |\sqrt{Q}\bh| |\bs -\br|^{p(x)-1}\, dx \\
  &  \qquad \qquad \leq C K_\pp \|\bh\|_{\LQpp(E^-)}
    \left\| |\bs -\br|^{\pp-1}\right\|_{L^\cpp(E)}  \\
  &  \qquad \qquad \leq C K_\pp \|\bh\|_{\LQpp(E^-)}
    \| |\bs-\br|\|_{\Lp(E)}^{b_*-1} \\
    & \qquad \qquad  \leq   C K_\pp \|\bh\|_{\LQpp(E^-)} (|z-y|\|\bw\|_{\HQpp(v;E)})^{b_*-1}.  \\
\end{align*}
Thus, the integral~\eqref{Hemi-estimate} converges to $0$ on $E^-$ as
$z\to y$, and so $\T$ is hemicontinuous.

\end{proof}

\begin{lemma}\label{coercive}
$\T$ is almost coercive.
\end{lemma}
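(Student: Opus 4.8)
The plan is to unwind the definitions of $\T$ and $\Gamma$, reduce the coercivity inequality to a scalar statement about the single quantity $\|\bg\|_{\LQpp(E)}$, and then exploit $p_->1$ together with the Poincar\'e inequality to choose the threshold $\lambda$. First I would use $\bw=\bu=(u,\bg)$ itself as the second argument in Definition~\ref{oper-T} and observe, since $\bg^TQ\bg=|\sqrt{Q}\bg|^2$, that
\[
\langle \T(\bu),\bu\rangle=\int_E|\sqrt{Q(x)}\bg(x)|^{p(x)-2}\bg^T(x)Q(x)\bg(x)\,dx=\int_E|\sqrt{Q(x)}\bg(x)|^{p(x)}\,dx=\rho_{\pp,E}(|\sqrt{Q}\bg|).
\]
By Proposition~\ref{mod-norm:ineq}, whenever $\|\bg\|_{\LQpp(E)}=\||\sqrt{Q}\bg|\|_{\Lpp(E)}\geq 1$ this modular is at least $\|\bg\|_{\LQpp(E)}^{p_-}$.

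Next I would bound $\langle\Gamma,\bu\rangle$ from above. H\"older's inequality (Theorem~\ref{Holder}) gives
\[
\langle\Gamma,\bu\rangle\leq\Big|\int_E|f|^{p(x)-2}f\,u\,v^{p(x)}\,dx\Big|\leq K_\pp\,\||fv|^{\pp-1}\|_{\Lcpp(E)}\,\|u\|_{\Lpp(v;E)},
\]
and by Theorem~\ref{norm-ineq} the factor $\||fv|^{\pp-1}\|_{\Lcpp(E)}$ is finite because $fv\in\Lpp(E)$. The crucial point is that $\bu=(u,\bg)\in\TildeHQpp(v;E)$ has zero $v$-mean, so Theorem~\ref{pcforH} applies and yields $\|u\|_{\Lpp(v;E)}\leq C_0\|\bg\|_{\LQpp(E)}$; hence $\langle\Gamma,\bu\rangle\leq C_f\|\bg\|_{\LQpp(E)}$ with $C_f=K_\pp C_0\||fv|^{\pp-1}\|_{\Lcpp(E)}$ depending only on $f$, $v$, $\pp$, $E$. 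The same Poincar\'e inequality also gives $\|\bu\|_{\TildeHQpp(v;E)}=\|u\|_{\Lpp(v;E)}+\|\bg\|_{\LQpp(E)}\leq(1+C_0)\|\bg\|_{\LQpp(E)}$.

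Finally I would set $\lambda=(1+C_0)\max\{1,\,C_f^{1/(p_--1)}\}$. If $\|\bu\|_{\TildeHQpp(v;E)}>\lambda$, then $\|\bg\|_{\LQpp(E)}>\max\{1,\,C_f^{1/(p_--1)}\}$, so $\|\bg\|_{\LQpp(E)}\geq 1$ and $\|\bg\|_{\LQpp(E)}^{p_--1}>C_f$, whence
\[
\langle\T(\bu),\bu\rangle\geq\|\bg\|_{\LQpp(E)}^{p_-}=\|\bg\|_{\LQpp(E)}^{p_--1}\,\|\bg\|_{\LQpp(E)}>C_f\,\|\bg\|_{\LQpp(E)}\geq\langle\Gamma,\bu\rangle,
\]
which is exactly almost coercivity (and the degenerate case is harmless: $\|\bu\|>\lambda>0$ forces $\bu\neq 0$). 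I expect the only genuine obstacle to be the bookkeeping around the inequality $\|u\|_{\Lpp(v;E)}\leq C_0\|\bg\|_{\LQpp(E)}$: coercivity really does need the $\Gamma$-term controlled by a power of $\|\bg\|_{\LQpp(E)}$ alone, and this is precisely what the extension of the Poincar\'e inequality to $\TildeHQpp(v;E)$ in Theorem~\ref{pcforH} (via the density Lemma~\ref{density}) was set up to supply; everything else is elementary once one tracks which branch of Proposition~\ref{mod-norm:ineq} applies and uses $p_->1$.
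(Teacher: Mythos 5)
Your proposal is correct and follows essentially the same strategy as the paper: use the Poincar\'e inequality from Theorem~\ref{pcforH} to force $\|\bg\|_{\LQpp(E)}>1$, apply Proposition~\ref{mod-norm:ineq} to get $\langle\T(\bu),\bu\rangle\geq\|\bg\|_{\LQpp(E)}^{p_-}$, bound $\langle\Gamma,\bu\rangle$ via H\"older and Theorem~\ref{norm-ineq}, and exploit $p_->1$ to choose $\lambda$. The only cosmetic difference is that you reduce both sides of the comparison to the single scalar $\|\bg\|_{\LQpp(E)}$, whereas the paper carries the full norm $\|\bu\|_{\HQpp(v;E)}$ through both estimates (producing the auxiliary constant $\mathcal{C}=C(f)(C_0^{p_-}+1)/2^{1-p_-}$) before arriving at the same conclusion; your bookkeeping is a bit leaner but the ingredients and logic are identical.
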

\begin{rem} The proof of Lemma~\ref{coercive} is the only part of the
  proof that requires the Poincar\'e inequality \eqref{PC-ineq}.
\end{rem}
\begin{proof}
Fix $f \in \Lpp(v;E)$ and let $\Gamma=\Gamma_f$.  We need to find
$\lambda>0$ sufficiently large that for any $\bu = (u,\bg) \in
\TildeHQpp(v;E)$ such that $\|\bu\|_{\HQpp(v;E)} > \lambda$, $\langle
\T(\bu),\bu\rangle > \langle \Gamma, \bu \rangle$.   Suppose first
that  $\lambda >1+C_0$ where $C_0$ is as in Poincar\'e
inequality~\eqref{PC-ineq}.
By Lemma~\ref{density}, and since $u$ has mean zero, we have that
\begin{equation*}
  \lambda
  < \|\bu\|_{\HQpp(v;E)}
  = \|u\|_{\Lpp(v;E)} + \|\bg\|_{\LQpp(E)} \leq (C_0 +1) \|\bg\|_{\LQpp(E)}.
  \end{equation*}
  
  Hence, $\|\bg\|_{\LQpp(E)} >1$.

By Proposition \ref{mod-norm:ineq} and the Poincar\'e inequality
\eqref{PC-ineq} we have that
\begin{multline*}
  \langle \T(\bu), \bu\rangle
  = \int_E |\sqrt{Q}\bg|^{p(x)-2} \bg^T Q \bg dx \\
     = \int_E |\sqrt{Q}\bg|^{p(x)}dx 
     \geq \|\bg\|_{\LQpp(E)}^{p_-} 
    \geq \frac{1}{C_0^{p_-}} \|u\|_{\Lpp(v;E)}^{p_-} .
  \end{multline*}
Consequently, 
\begin{equation}\label{lower-bound:Tu,u}
    (C_0^{p_-} +1) \langle \T(\bu), \bu\rangle \geq
    \|\bg\|_{\LQpp(E)}^{p_-} + \|u\|_{\Lpp(v;E)}^{p_-} \geq 2^{1-p_-}\|\bu\|_{\HQpp(v;E)}^{p_-}.
\end{equation}

By H\"older's inequality,
\begin{align*}
    |\langle \Gamma, \bu\rangle| & = \left| \int_E |f(x)|^{p(x)-2} f(x) u(x) v(x)^{p(x)} dx \right|\\
    & \leq \int_E |f(x)|^{p(x)-1} v(x)^{p(x)-1} |u(x)| v(x) dx \\
    & \leq K_\pp \|(fv)^{\pp-1}\|_{L^\cpp(E) } \|u\|_{\Lpp(v;E)} \\
    & \leq K_\pp \|(fv)^{\pp-1}\|_{L^\cpp(E)} \|\bu\|_{\HQpp(v;E)}.
\end{align*}
Since $f \in \Lpp(v;E)$, $\|fv\|_{\Lp(E)} < \infty$, and so by
Theorem~\ref{norm-ineq}, 
$\||fv|^{\pp-1}\|_{L^\cpp(E)} < \infty$. Let
$C(f) = K_\pp \|(fv)^{\pp-1} \|_{L^\cpp(E)}$; then we have
\begin{multline*}
  C(f) \|\bu\|_{\HQpp(v;E)}
   = C(f)\|\bu\|_{\HQpp(v;E)}^{1-p_-} \|\bu\|_{\HQpp(v;E)}^{p_-} \\
     \leq C(f)  \|\bu\|_{\HQpp(v;E)}^{1-p_-}
     \frac{C_0^{p_-}+1}{2^{1-p_-}}
     \langle \T(\bu), \bu \rangle.
   \end{multline*}
   
   Let $\mathcal{C} = C(f)\frac{C_0^{p_-}+1}{2^{1-p_-}}$; then
   \[ |\langle \Gamma, \bu \rangle |
     \leq \mathcal{C} \|\bu\|_{\HQpp(v;E)}^{1-p_-} \langle \T(\bu), \bu\rangle. \]
Therefore,  if we further assume that $\lambda >
\mathcal{C}^{1/(p_--1)}$, then
\[ \|\bu\|_{\HQpp(v;E)} > \lambda > \mathcal{C}^{1/(p_--1)}. \]
This in turn implies $\mathcal{C}\|\bu\|_{\HQpp(v;E)}^{1-p_-}<1$.
Thus, we have that 
\[|\langle \Gamma, \bu \rangle | < \langle \T(\bu), \bu\rangle \]
and our proof is complete. 
\end{proof}

\medskip

We have now shown that the hypotheses of Minty's theorem are
satisfied, and so a weak solution exists.  To complete the proof, we
need to prove that the regularity estimate~\eqref{hypoest} holds.  This
is established in the next lemma.

\begin{lemma}\label{p*r*:reg-ineq}
There is a positive constant $C = C(\pp, E)$ such that for any $f \in \Lpp(v;E)$ and any corresponding weak solution $(u,\bg)_f \in \TildeHQpp(v;E)$,
\[\|u\|_{\Lpp(v;E)} \leq C\|f\|_{\Lpp(v;E)}^{\frac{r_*-1}{p_*-1}},\]
where $p_*$ and $r_*$ are defined by~\eqref{eqn:exponents}.
\end{lemma}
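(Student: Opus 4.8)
The plan is to test the weak formulation of \eqref{nprob} against the solution $(u,\bg)_f$ itself and then feed the resulting identity into the Poincar\'e inequality, which in this half of Theorem~\ref{main-result} we are assuming. A preliminary point is needed first: Definition~\ref{weaksol} requires the weak identity only for test functions in $C^1(\overline E)\cap\TildeHQpp(v;E)$, but both sides of it depend continuously on the test function $\bw$ over all of $\TildeHQpp(v;E)$ — the left since $\T$ is bounded, the right since $\Gamma_f$ is bounded — so by the density Lemma~\ref{density} the identity $\langle\T(\bu),\bw\rangle=\langle\Gamma,\bw\rangle$ extends to every $\bw\in\TildeHQpp(v;E)$. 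Taking $\bw=\bu=(u,\bg)$ gives
\[\int_E\bigl|\sqrt{Q(x)}\bg(x)\bigr|^{p(x)}\,dx=\int_E\bigl|\sqrt{Q(x)}\bg(x)\bigr|^{p(x)-2}\bg(x)^TQ(x)\bg(x)\,dx=-\int_E|f(x)|^{p(x)-2}f(x)u(x)v(x)^{p(x)}\,dx.\]

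Next I would estimate the two ends of this chain. Since $p_+<\infty$, Proposition~\ref{mod-norm:ineq} applied to $|\sqrt{Q}\bg|$ bounds the left side below by $\|\bg\|_{\LQpp(E)}^{p_*}$, with $p_*$ precisely as in \eqref{eqn:exponents}. For the right side, H\"older's inequality (Theorem~\ref{Holder}) together with Theorem~\ref{norm-ineq} applied to $fv$ gives
\[\Bigl|\int_E|f|^{p(x)-2}f\,u\,v^{p(x)}\,dx\Bigr|\le K_\pp\,\bigl\||fv|^{\pp-1}\bigr\|_{\Lcpp(E)}\,\|uv\|_{\Lpp(E)}\le K_\pp\,\|f\|_{\Lpp(v;E)}^{r_*-1}\,\|u\|_{\Lpp(v;E)},\]
where the exponent $b_*$ produced by Theorem~\ref{norm-ineq} is exactly $r_*$ because $\|fv\|_{\Lpp(E)}=\|f\|_{\Lpp(v;E)}$. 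Combining the two estimates yields $\|\bg\|_{\LQpp(E)}^{p_*}\le K_\pp\,\|f\|_{\Lpp(v;E)}^{r_*-1}\,\|u\|_{\Lpp(v;E)}$.

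To finish, I would invoke the Poincar\'e inequality in the form of Theorem~\ref{pcforH}: since $(u,\bg)\in\TildeHQpp(v;E)$ and Definition~\ref{PC-prop} is assumed, $\|u\|_{\Lpp(v;E)}\le C_0\|\bg\|_{\LQpp(E)}$. If $\bg=0$ this already forces $\|u\|_{\Lpp(v;E)}=0$ and the lemma is trivial; otherwise, substituting and dividing by $\|\bg\|_{\LQpp(E)}$ gives $\|\bg\|_{\LQpp(E)}^{p_*-1}\le K_\pp C_0\,\|f\|_{\Lpp(v;E)}^{r_*-1}$, hence, raising to the power $1/(p_*-1)$ and applying the Poincar\'e inequality once more,
\[\|u\|_{\Lpp(v;E)}\le C_0\|\bg\|_{\LQpp(E)}\le C_0(K_\pp C_0)^{1/(p_*-1)}\,\|f\|_{\Lpp(v;E)}^{\frac{r_*-1}{p_*-1}}.\]
Setting $C=C_0\max\{(K_\pp C_0)^{1/(p_--1)},(K_\pp C_0)^{1/(p_+-1)}\}$, which depends only on $\pp$ and $E$, absorbs both possibilities for $p_*$ and completes the proof. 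The one step that is not pure bookkeeping is the first — legitimizing the use of the solution itself as a test function via the boundedness of $\T$ and $\Gamma_f$ and the density Lemma~\ref{density}; after that, everything reduces to the modular--norm inequalities and to noticing that the case-split exponents $p_*$, $r_*$ and the exponent $b_*$ from Theorem~\ref{norm-ineq} line up. (This lemma is the mirror of Lemma~\ref{N->PC:p*r*}, where the same self-testing identity was used to pass from the regularity estimate to the bound on $\|\bg\|_{\LQpp(E)}$; here we run the implication in the reverse direction.)
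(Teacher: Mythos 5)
Your proof is correct and follows essentially the same route as the paper's: test the weak formulation against the solution $(u,\bg)_f$ itself, bound the resulting modular from below by $\|\bg\|_{\LQpp(E)}^{p_*}$ via Proposition~\ref{mod-norm:ineq}, bound the right-hand side by H\"older and Theorem~\ref{norm-ineq} (noting $b_*=r_*$), and close the loop with the Poincar\'e inequality from Theorem~\ref{pcforH}. The only departures are cosmetic refinements — you make explicit the density/boundedness argument that legitimizes using $(u,\bg)_f$ as a test function, handle the $\bg=0$ case separately, and take a $\max$ over $p_\pm$ in the final constant so it is valid regardless of whether $K_\pp C_0\ge 1$ — all of which the paper leaves implicit.
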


\begin{proof}
By Definition~\ref{weaksol}, Proposition \ref{mod-norm:ineq}, Theorem \ref{norm-ineq} and the Poincar\'e inequality
\eqref{PC-ineq}, we have that
\begin{align*}
  \|\bg\|_{\LQpp(E)}^{p_*}
    & \leq \int_E |\sqrt{Q}\bg|^{p(x)-2}\bg^T Q \bg dx \\
    & = -\int_E |f|^{p(x)-2}  f u v^{p(x)} dx \\
    & \leq \int_E |f|^{p(x)-1} v^{p(x)-1} |u| v dx \\
    & \leq K_\pp \||fv|^{\pp-1}\|_{L^\cpp(E) } \|uv\|_{\Lp(E)} \\
    & \leq K_\pp C_0 \||fv|^{\pp-1}\|_{L^\cpp(E)} \|\bg\|_{\LQpp(E)} \\
    & \leq K_\pp C_0 \|fv\|_{\Lp(E)}^{r_*-1} \|\bg\|_{\LQpp(E)} \\
    & = K_\pp C_0 \|f\|_{\Lpp(v;E)}^{r_*-1} \|\bg\|_{\LQpp(E)}. 
\end{align*}
If we combine this inequality with the Poincar\'e inequality, we get
\begin{multline*}
  \|u\|_{\Lpp(v;E)} \leq C_0 \|\bg\|_{\LQpp(E)} \\
  \leq C_0( K_\pp C_0)^{1/(p_*-1)}\|f\|_{\Lpp(v;E)}^{(r_*-1)/(p_*-1)}
  \leq C_0( K_\pp C_0)^{1/(p_--1)}\|f\|_{\Lpp(v;E)}^{(r_*-1)/(p_*-1)},
\end{multline*}
which is the desired inequality. 
\end{proof}

\appendix

\section{Estimates for the weighted Poincar\'e inequality}
\label{appendix:poincare}

In this section we prove Proposition~\ref{eqPC}.  As we noted above,
versions of this result appear to be known, but we have not found the
proof in the literature.
Recall that $E$ is bounded, $v\in L^p(E)$, and $w=v^p$, so $w\in
L^1(E)$.  Fix $f\in C^1(\overline{E})$; then 
\begin{multline*}
  \left| f_{E,w} - f_{E,v}\right|
= \left| \frac{1}{v(E)}\int_E (f_{E,w} - f)w^{1/p}\,dx\right| \\
\leq K_1\left(\int_E |f-f_{E,w}|^pw\,dx\right)^{1/p}
= K_1  \|f-f_{E,w}\|_{L^p(v;E)},
\end{multline*}
where $K_1= \frac{|E|^{1/p'}}{v(E)}$. But then we have that
\begin{multline*}
  \|f-f_{E,v}\|_{L^p(v;E)}
 \leq \|f-f_{E,w}\|_{L^p(v;E)}+\|f_{E,w}-f_{E,v}\|_{L^p(v;E)} \\
\leq (1+K_1w(E)^{1/p}) \|f-f_{E,w}\|_{L^p(v;E)}.
\end{multline*}

Conversely, if we switch the roles of $v$ and $w$ in the first
calculation above, we get that 

\begin{equation*}
  \left| f_{E,w} - f_{E,v}\right|
   = \frac{1}{w(E)}\int_E (f_{E,v} - f)vv^{p-1}\,dx
\leq K_2 \|f-f_{E,v}\|_{L^p(v;E)},
\end{equation*}
where $K_2 = w(E)^{-{1/p}}$.  Then we can argue as we did before to get
\begin{equation*}
 \|f-f_{E,w}\|_{L^p(v;E)}\\
  \leq (1+K_2w(E)^{1/p}) \|f-f_{E,v}\|_{L^p(v;E)}
  = 2 \|f-f_{E,v}\|_{L^p(v;E)}. 
\end{equation*}

Similarly, if we take $w=1$ in the first argument, we get that
\[ \|f-f_{E,v}\|_{L^p(v;E)}
  \leq (1+K_3) \|f-f_{E}\|_{L^p(v;E)}, \]
where $K_3= \frac{|E|}{v(E)}$.  On the other hand, to prove the
converse inequality, we have
\[ |f_{E,v}-f_E|
  = \bigg|\frac{1}{|E|} \int_E (f-f_{E,v}) v v^{-1}\,dx\bigg|
  \leq K_4 \|f-f_{E,v}\|_{L^p(v;E)}, \]
where
\[  K_4= \frac{1}{|E|} \bigg(\int_E v^{-p'}\,dx\bigg)^{1/p'}< \infty \]
by our assumption that $v^{-1}\in L^{p'}(E)$.  The argument then
continues as before.

 \bibliographystyle{plain}

\end{document}